\newcommand{\change}{}
\newcommand{\newnew}[1]{}
\newtheorem{thm}{Theorem}[section]
\newtheorem{prop}[thm]{Proposition}
\newtheorem{lem}[thm]{Lemma}
\newtheorem{cor}[thm]{Corollary}
\theoremstyle{definition}
\newtheorem{remark}[thm]{Remark}
\newtheorem{definition}[thm]{Definition}
\newtheorem*{conclusions}{Conclusion}
\numberwithin{equation}{section}
\newcommand{\X}{\mathfrak{X}}
\newcommand{\R}{\mathbb{R}}
\newcommand{\del}{\delta}
\newcommand{\g}{\mathfrak{g}} 
\newcommand{\h}{\mathfrak{h}} 
\newcommand{\ra}{\rightarrow}
\newcommand{\lra}{\longrightarrow}
\newcommand{\dd}{\mathrm{d}} 
\newcommand{\T}{\mathrm{T}} 
\newcommand{\V}{\mathrm{V}} 
\newcommand{\HH}{\mathrm{H}} 
\newcommand{\pa}{\partial} 
\newcommand{\Ann}{\operatorname{Ann}} 
\newcommand{\spann}{\operatorname{span}}
\newcommand{\wt}{\widetilde}
\newcommand{\wh}{\widehat}
\newcommand{\TT}{\mathcal {T}} 
\newcommand{\WW}{\mathcal {W}} 
\newcommand{\C}{\mathcal {C}} 
\newcommand{\PP}{\mathcal {P}} 
\newcommand{\jet}{\mathrm{T}} 
\newcommand{\Jet}[1]{\frac{\partial}{\partial #1}\Big|_{0}} 
\newcommand{\jjet}[1]{\frac{\partial}{\partial #1}\big|_{0}} 
\newcommand{\ad}{\operatorname{ad}} 
\newcommand{\A}[2]{A^{#1}_{\ #2}} 
\newcommand{\br}[3]{C^{#1}_{\ #2 #3}} 
\newcommand{\bra}[3]{c^{#1}_{\ #2 #3}} 
\newcommand{\delt}[2]{\del^{#1}_{\ #2}} 
\newcommand{\B}{\mathcal{B}} 
\newcommand{\RR}{\mathcal{R}} 
\newcommand{\BL}{\mathrm{H}L} 
\newcommand{\FL}{\mathrm{F}L} 
\newcommand{\cform}{\bm{\omega}} 
\newcommand{\Cform}{\bm{\dot\omega}} 
\newcommand{\und}{\underline}
\def\<#1>{\left\langle #1\right\rangle} 
\newcounter{example}
\newenvironment{example}[1]{\refstepcounter{thm} \noindent \textbf{Example \arabic{section}.\arabic{thm}. #1}}{\smallskip}
\title{A comparison of vakonomic and nonholonomic dynamics with applications to non-invariant Chaplygin systems\footnote{This research was supported by the National Science Center under the grant DEC-2011/02/A/ST1/00208 ``Solvability, chaos and control in quantum systems''.}}
\author{
Michał Jóźwikowski\footnote{email: \texttt{mjozwikowski@gmail.com}}\\ \emph{Institute of Mathematics, Polish Academy of Sciences}\\[0.5cm]
Witold Respondek\footnote{email: \texttt{witold.respondek@insa-rouen.fr}}\\ \emph{Normandie Universit\'{e}, INSA de Rouen, LMI}}
\begin{document}
\maketitle
\begin{abstract}
We study relations between vakonomically and nonholonomically constrained Lagrangian dynamics for the same set of linear constraints. The basic idea is to compare both situations at the level of variational principles, not equations of motion as has been done so far. The method seems to be quite powerful and effective. In particular, it allows to derive, interpret and generalize many known results on non-Abelian Chaplygin systems. We apply it also to a class of systems on Lie groups with a left-invariant constraints distribution. Concrete examples of the unicycle in a potential field, the two-wheeled carriage and the generalized Heisenberg system are discussed. 
\end{abstract}

\section{Introduction}\label{sec:intro}

\paragraph{State of research.} 
The problem of obtaining the equations of motion of a mechanical system in the presence of constraints has a long history and has gained attention of many prominent researchers (see e.g., \cite{Leon_2012} for a brief historical discussion, compare also \cite{Arnold_Koz_Nies_2010}). In general, constraints are introduced by specifying a submanifold (or simply a subset) $C\subset \T Q$ of the tangent bundle of the configuration manifold $Q$. Typically $C$ is assumed to be a (non-integrable) distribution (we speak of the \emph{linear case} in such a situation). In principle, there are two non-equivalent ways of generating the dynamics under the constraints $C$.\footnote{Throughout this work we will use the term \emph{dynamics} as a synonym of the set of all trajectories of a given system.} They are known as \emph{nonholonomic} and \emph{vakonomic} (i.e., \emph{variational of axiomatic kind} \cite{Arnold_Koz_Nies_2010}) methods. Nonholonomic dynamics are believed to be the ones describing the real physical movement of the constrained system \cite{Lewis_Murray_1995}. They are obtained by means of \emph{Chetaev's principle} (in the linear case we speak rather about \emph{d'Alembert's principle} or the \emph{principle of virtual work}). On the other hand, vakonomic dynamics are related to optimal control theory and are derived as a solution of a constrained variational problem.

The word \emph{nonholonomic} is often used in two contexts. As an adjective describing the method of generating the dynamics by means of Chetaev's (d'Alembert's) principle and as a substitute of the word \emph{non-integrable} in the description of the constraints distribution $C$. Because of the latter, vakonomic dynamics are sometimes called \emph{variational nonholonomic} \cite{Cardin_Favretti_1996,Fernandez_Bloch_2008}. To avoid possible confusions, in this paper we will reserve the name \emph{nonholonomic} for its meaning related with Chetaev's principle. Let us remark that by the \emph{dynamics} we will understand the set of all trajectories of the considered system.

The problem of comparison of the two (i.e., vakonomic and nonholonomic) non-equivalent ways of generating the constrained dynamics has  been addressed since long time by the scientific community (see Sec. 3 in \cite{Leon_2012} and the references therein). Their non-equivalence can be easily observed at the level of the equations of motion -- in many cases vakonomic dynamics are much richer than the nonholonomic ones. Therefore, it is natural to address the following question: 
\begin{equation}\label{main_question}\tag{Q1}
\text{\textbf{Are the nonholonomic dynamics a subset of the vakonomic ones for a given constrained system?}}
\end{equation} 
It is also interesting to formulate this problem for a particular trajectory:
\begin{equation}\label{main_question_1}\tag{Q2}
\text{\textbf{Is a given nonholonomic trajectory a vakonomic one?}}
\end{equation}  
Some authors ask also the inverse of the latter:
\begin{equation}\label{main_question_2}\tag{Q3}
\text{\textbf{Is a given vakonomic trajectory a nonholonomic one?}}
\end{equation}  
In this paper we will be concerned with giving answers to these questions {\change for a relatively wide class of non-invariant Chaplygin systems}.  We will refer to them as to the \emph{comparison problems}.
Let us remark that in general it is easier to find a set of necessary and sufficient conditions for a given trajectory of a system to answer \eqref{main_question_1} or \eqref{main_question_2} than to find such conditions for the whole system to answer \eqref{main_question} (although a positive answer to \eqref{main_question_1} on every nonholonomic trajectory provides an obvious sufficient condition). The reason is basically that restricting attention to a single trajectory helps to avoid certain global problems, such as the existence of solutions, etc.

Although it was observed already at the end of $19^\mathrm{th}$ century that the nonholonomic and vakonomic methods lead, in general, to different trajectories, the problem of their comparison was first stated in \cite{Lewis_Murray_1995} and \cite{Cardin_Favretti_1996} (the former being inspired by an example of a unicycle moving on the plane discussed in \cite{Bloch_Crouch_1993}). In the terminology of \cite{Fernandez_Bloch_2008} nonholonomic systems answering positively question \eqref{main_question} are called \emph{conditionally variational}, whereas systems possessing only some nonholonomic trajectories that answer positively question \eqref{main_question_1} -- \emph{partially conditionally variational}. Crampin and Mestdag \cite{Crampin_Mestdag_2010} used terms \emph{weak} and \emph{strong} \emph{consistency} in a similar, but slightly different context. Some authors (\cite{Cortes_Leon_Inn_2003,Favretti_1998,Fernandez_Bloch_2008}) speak about \emph{equivalence} of nonholonomic and vakonomic dynamics for systems positively answering \eqref{main_question}, although at the level of dynamics there can be at most an inclusion, not equivalence. 

In general the sets of nonholonomic and vakonomic trajectories are not related by the inclusion of type \eqref{main_question}  -- a sphere rolling on a rotating table provides a natural example of a system whose generic nonholonomic trajectory cannot be a vakonomic one (see \cite{Favretti_1998,Lewis_Murray_1995}). According to \cite{Fernandez_Bloch_2008}, Rumianstev \cite{Rumianstev_1978} was the first to answer question \eqref{main_question_1}. His answer, however, requires an explicit knowledge of the Lagrange multipliers of the vakonomic trajectories, hence in fact the solutions of vakonomically constrained problem.
So far all approaches to the comparison problems \eqref{main_question}--\eqref{main_question_2} used the method of comparing the nonholonomic and vakonomic equations of motion. 
Below we list the most important existing contributions to this field. The first three of them concern \emph{Chaplygin systems} i.e., Lagrangian systems defined on a principal $G$-bundle with linear constraints given by a horizontal distribution of a $G$-principal connection. The last one is more general, yet the answer is presented in the form of a non-decisive algorithm (concrete criteria are derived also for the Chaplygin case only). Let us note that most of those results require certain regularity assumptions about the Lagrangian that are needed, in principle, to present implicit equations of motion in an explicit form.
\begin{itemize}
\item In Thm. 3.1 in \cite{Favretti_1998}, Favretti gives an answer to \eqref{main_question_1} and \eqref{main_question_2} (in a slightly more general setting for affine distributions). He constructs explicitly the vakonomic  multiplier and, by comparing nonholonomic and vakonomic equations of motion, gives an answer in terms of the curvature of the constraints distribution. His assumptions are the $G$-invariance and certain regularity of the Lagrangian (the latter being sufficient for the existence of a momentum map and for the nonholonomic multiplier to be given by a time-dependent section over the configuration space). For the constrained geodesic problem Favretti gave, in Thm. 3.2 of \cite{Favretti_1998}, sufficient condition for the positive answer to \eqref{main_question}. His assumptions are, however, very strong: the constraints distribution is totally geodesic and, additionally, the perpendicular distribution is integrable. 
As a particular example he showed that a two-wheeled carriage is a constrained mechanical system for which every nonholonomic trajectory is a vakonomic one.
    
\item Fernandez and Bloch in  \cite{Fernandez_Bloch_2008} were able to explicitly find vakonomic multipliers for a simple (yet relatively wide) class of \emph{Abelian Chaplygin systems} (and under an additional assumption that the Lagrangian is of mechanical type and regular). In consequence, they were able to apply Rumianstev's method to these systems. The resulting answer to \eqref{main_question_1} was given in terms of the geometry of the constraints distribution and vertical derivatives of the Lagrangian. For a more general class of \emph{non-Abelian Chaplygin systems} their approach gives a partial answer to \eqref{main_question_1}. Fernandez and Bloch show, in particular, that examples of a unicycle and a two-wheeled carriage answer \eqref{main_question} positively. They, however, claim incorrectly that the examples of the Heisenberg system, the Chaplygin skate, and an invariant system on $SO(3)$ give a negative answer to \eqref{main_question}. We comment and correct their  statement in Remark~\ref{rem:Bloch_Fernandez}. Let us remark that Fernandez and Bloch study also an interesting question about the relation of problem \eqref{main_question} with that of the existence of an invariant measure. 

\item 
The paper of Crampin and Mestdag \cite{Crampin_Mestdag_2010} attacks the problem of comparison from a slightly different angle. Their main idea is to express the constrained dynamics by means of certain vector fields on $\T Q$. The comparison problem can now be solved by comparing those fields. To do so, they use an ingenious technical tool of \emph{anholonomic frames} (i.e., local frames adapted to the constraints distribution), which considerably simplifies the calculations. As a result they were able to extend the results of \cite{Fernandez_Bloch_2008} to non-Abelian Chaplygin systems (actually regaining some results from \cite{Favretti_1998}). Crampin and Mestdag work under technical assumptions about the regularity of the Lagrangian (required if we want the Lagrangian dynamic to be locally the flow of a vector field) and restrict themselves to specific (yet quite general) classes of vakonomic multipliers (defined by a section over the configuration manifold). Due to these restrictions the relation of their results with the original problem \eqref{main_question} is not obvious. Clearly if the multipliers can be determined (as turns out to be the case for Chaplygin systems) one answers \eqref{main_question}. Nevertheless, they provide criteria for answering \eqref{main_question_1} and \eqref{main_question_2} for particular trajectories.

\item Cortes, de Leon, Martin de Diego and Martinez \cite{Cortes_Leon_Inn_2003}  formulated both vakonomic and nonholonomic mechanics in a presymplectic framework similar to Skinner-Rusk formalism. To compare both dynamics one has to apply a constraint algorithm and compare the resulting final constraints submanifolds. Using that method the authors re-obtained Theorems 3.1 and 3.2 of Favretti \cite{Favretti_1998} under weaker assumptions. They also studied a few well-known examples including the unicycle.   
\end{itemize}
We discuss the relation of the above results with our work in Remark~\ref{rem:comparison}.


\paragraph{Crucial ideas.} 
Our basic ideas are rooted in the conceptual works of Tulczyjew \cite{Tul_2004} on statics of physical systems. According to him, equilibria of such a system are determined by a \emph{variational principle} which involves possible \emph{configurations} of the system, (infinitesimal) \emph{processes} (movements)  that the system can be subject to, and its \emph{reactions} to such movements (i.e., work that must be performed in order to change the configuration). Paper \cite{Tul_2004} deals mainly with statics, yet this limitation should be understood as a simple particular realization of a very general philosophy. For example, to treat Lagrangian mechanics we should translate configurations to \emph{admissible trajectories}, infinitesimal movements to \emph{admissible variations} and reactions to the \emph{change of the action functional} along these variations. The philosophy of Tulczyjew gives also a new insight into the idea of constraints: these are simply \emph{restrictions} of the sets of configurations and/or infinitesimal movements of the system. It also changes the perspective of looking at the equations of motion: we should understand them not as constituting the system, but merely as reflections of the underlying variational principle, which is the basis of every study. Actually this point of view is not new, just ''out of fashion'' at present. It can be traced back in time as far as to Lagrange himself (see the first comment in Koiller's paper \cite{Koiller_1992}).

Describing nonholonomic and vakonomic Lagrangian dynamics in the spirit of Tulczyjew's variational principles is elementary. Given a constraints submanifold $C\subset \T Q$ we define admissible trajectories for both situations as these paths $\gamma(t)\in C$ that are the tangent lifts of the true base paths. The reactions are again common for both situations and given by the changes of the action functional (defined by means of the Lagrangian). The only difference appears at the level of admissible variations: in the nonholonomic case they are described by Chetaev's principle (for linear constraints they are performed in the directions of $C$), whereas in the vakonomic case we consider these variations that respect the constraints. In this way we are able to present both the nonholonomic and the vakonomic dynamics within the common framework of Tulczyjew's variational principles. This observation should be attributed to Gracia, Martin and Munos \cite{Gracia_Martin_Munos_2003}. Similar remarks have been made before (see e.g., \cite{Cardin_Favretti_1996}), yet the authors of \cite{Gracia_Martin_Munos_2003} were, in our opinion, the first to use systematically that common nature of nonholonomic and vakonomic dynamics. In this context one should also mention later works \cite{GG_2008,Guo_Liu_Inn_2007,Leon_2012}.

The main idea of this paper comes directly from \cite{Tul_2004} and \cite{Gracia_Martin_Munos_2003}. Namely, \textbf{we compare nonholonomic and vakonomic dynamics at the level of the corresponding variational principles} (in fact, it is enough to concentrate on admissible variations) not equations of motion, as is usually done. In this way we get to the point where the actual differences between these two dynamics come from. Differences in the equations of motions are just an emanation of these basic differences. And, after all, questions \eqref{main_question}-\eqref{main_question_2} are not about the equality of equations, but the equality of trajectories. 
From the technical side we must admit a strong inspiration from \cite{Crampin_Mestdag_2010}. The idea of adapting the frames to the constraints distribution allowed us to treat Chaplygin systems easily.


\paragraph{Our results.}
In Section~\ref{sec:var_calc} we present the philosophy of Tulczyjew's variational principles \cite{Tul_2004} applied to Lagrangian dynamics. We introduce \emph{restricted variational principles} that correspond to constrained systems and discuss the particular cases of nonholonomically and vakonomically constrained Lagrangian dynamics, proceeding in accordance with \cite{Gracia_Martin_Munos_2003}. Concerning the comparison problems we prove abstract Proposition~\ref{prop:constraints}, which states that \emph{restricting the variational principle} results in extending the set of extremals. A slight generalization in Proposition~\ref{prop:two_var_prob} allows us to incorporate symmetries of the Lagrangian into the game: we can compare the extremals of two different variational principles, provided that we can compare the admissible variations up to the symmetries of the Lagrangian. This possibly gives a new insight into an interesting problem to study relations between the constraints and the symmetries of the system (see \cite{Leon_2012}, Sec. 4.4 and the references therein). 
\medskip

The results of Section~\ref{sec:var_calc} have a very formal and abstract character, and may seem to introduce superfluous formalism or just vainly  reformulate things that are well-known. To show that it is not so, we present, in Section~\ref{sec:chaplygin}, their application to the comparison problems \eqref{main_question_1} and \eqref{main_question_2} for a broad class of systems with linear constraints, namely to \emph{non-invariant} \emph{Chaplygin systems} (i.e., Chaplygin systems without the $G$-invariance assumption). Our biggest gain is simplicity, as admissible variations are much easier objects to work with than the equations of motions, the latter being derived from the former. Therefore the proof of our main result -- Theorem~\ref{thm:chap_syst} -- is straightforward. This result fully characterizes (in terms of the geometry of the (non-invariant) Chaplygin system):
\begin{enumerate}
\item [\eqref{part:nh_st}] those nonholonomic extremals that are simultaneously unconstrained extremals;
\item [\eqref{part:nh_vak}] those nonholonomic extremals that are simultaneously vakonomic extremals (i.e., provides an answer to \eqref{main_question_1});
\item [\eqref{part:vak_nh}] those vakonomic extremals (associated with a given Lagrange multiplier) that are simultaneously nonholonomic extremals (thus provides an answer to \eqref{main_question_2}).
\end{enumerate} 
The known results on Chaplygin systems from \cite{Crampin_Mestdag_2010,Favretti_1998,Fernandez_Bloch_2008} follow easily from Theorem~\ref{thm:chap_syst} as corollaries as pointed in Remark~\ref{rem:comparison} and Corollary~\ref{cor:chaplygin}. Let us mention that we do not just regain these results, but also substantially generalize them, as in our approach  any regularity conditions are superfluous and the role of the symmetry conditions becomes apparent. Actually, it turns out that the symmetry is not as important as the existence of the natural splitting of the configurations space into horizontal and vertical parts. 

In the remaining part of Section~\ref{sec:chaplygin}, we derive the precise form of vakonomic multipliers (Proposition~\ref{prop:chapl_vak}), discuss the relation between questions \eqref{main_question_1} and \eqref{main_question_2} (Lemma~\ref{lem:relation_b_c}) and study (non-invariant) Chaplygin systems subject to additional symmetry conditions in Corollaries~\ref{cor:inv_constr}--\ref{cor:chaplygin}.  
\medskip

Section~\ref{sec:lie_groups} presents an application of our general  methods from Section~\ref{sec:var_calc} to a particular class of systems on Lie groups with linear constraints defined by a left invariant distribution. Such systems, with an additional assumption of the symmetry of the Lagrangian, were considered for instance in \cite{Koiller_1992}. In this case, we prove Theorem~\ref{thm:Lie} which answers the same questions as Theorem~\ref{thm:chap_syst} for the considered class of systems (in fact both theorems are closely related as is explained in Remarks~\ref{rem:Chapl_as_Lie} and~\ref{rem:Chapl_as_lie_1}). In this case we can also derive the precise form of the vakonomic multiplier and, moreover, write explicitly the nonholonomic equations of motion (Lemma~\ref{lem:lie_eqn}). In Corollary \ref{cor:lie} we discuss a special case of a system with an additional symmetry.\medskip

In Section~\ref{sec:examples}, we study concrete examples of nonholonomic systems with linear constraints. These include the unicycle (Example~\ref{ex:unicycle}), the two-wheeled carriage (Example~\ref{ex:2wheels}), and the Heisenberg system (Example~\ref{ex:heis}, and its generalization in Example~\ref{ex:gen_heis}). All these situations position themselves in the common setting of Sections~\ref{sec:chaplygin} and \ref{sec:lie_groups} described in Remarks~\ref{rem:Chapl_as_Lie} and \ref{rem:Chapl_as_lie_1}. Therefore we use them to illustrate the results from both Section~\ref{sec:chaplygin} and \ref{sec:lie_groups}. For all the considered situations, which have been widely studied (for instance in \cite{Bloch_Crouch_1993,Cardin_Favretti_1996,Cortes_Leon_Inn_2003,Crampin_Mestdag_2010,Favretti_1998,Fernandez_Bloch_2008,Guo_Liu_Inn_2007}), 
our methods provide an elegant answer to question \eqref{main_question} (in most cases already known in the literature). Let us note that our Examples~\ref{ex:heis} and \ref{ex:gen_heis} contradict Proposition 3(5) in \cite{Fernandez_Bloch_2008}, which state that a system on a 3-dimensional manifold with a 2-dimensional non-integrable constraints distribution cannot answer question \eqref{main_question} positively. We explain that incorrect statement in Remark~\ref{rem:Bloch_Fernandez}. Of particular interest is Example~\ref{ex:2wheels} where, from purely geometric (Lie algebraic) and relatively simple consideration, we were able to re-obtain an interesting result from \cite{Crampin_Mestdag_2010}: the two-wheeled carriage with a shifted center of mass answers positively \eqref{main_question} if and only if the parameters of the system satisfy a certain algebraic condition. In \cite{Crampin_Mestdag_2010} this case is described by a vakonomic multiplier equal to the momentum shifted by a constant of motion. 

In addition to our main question \eqref{main_question}, in all considered examples we were also able to determine these nonholonomic trajectories which are simultaneously extremals of the unconstrained dynamics. We also derived the general form of the vakonomic multiplier.

\newpage
\section{Preliminaries}\label{sec:prel}

Throughout the paper we work in the $C^\infty$-smooth category. By $Q$ we will denote an $n$-dimensional smooth manifold, by $\tau_Q:\T Q\ra Q$ its tangent bundle, and by $\pi_Q:\T^\ast Q\ra Q$ the cotangent bundle. We will use the symbol $\X(Q)$ to denote the $C^\infty(Q)$-module of vector fields on $Q$. When working with local coordinates the summation convention of Einstein will always be assumed. {\change By $I=[t_0,t_1]\subset\R$ we will denote a fixed real interval.}

\paragraph{{\change The induced} local coordinates.} 
Let us introduce a local coordinate system  $(q^a)$, $a=1,2,\hdots, n$  on a manifold $Q$. Such a system induces a coordinate system $(q^a,\dot q^b)$, $a,b=1,\hdots,n$ on $\T Q$, i.e., $\dot q^b$'s are the coordinates of a vector in $\T Q$ with respect to the local frame $\{\pa_{q^a}\}$ or, equivalently, $\dot q^b:=\<\dd q^b,\cdot>:\T Q\ra\R$. {\change  An iteration of this construction leads to the induced coordinate system $(q^a,\dot q^b, q'^c, \dot q'^d)$ on the second tangent bundle $\T\T Q$. That is, $q'^c:=\<\dd q^c,\cdot>:\T\T Q\ra \R$ and $\dot q'^d:=\<\dd \dot q^d,\cdot>:\T\T Q\ra \R$. Note that now $q^c$ is treated as a function on $\T Q$ (by composing it with $\tau_Q$), contrary to $q^a$ being a function on $Q$.}

\paragraph{The canonical flip.}
It is well known (see e.g., \cite{GGU_2006}) that the second tangent bundle $\T\T Q$ admits an involutive map called \emph{the canonical flip}
$$\kappa_Q:\T\T Q\lra \T\T Q,$$ 
which is defined as
$$\kappa_Q:\Jet s\Jet t q(t,s)\longmapsto\Jet t\Jet sq(t,s)\ ,$$
where {\change a homotopy} $q(t,s)\subset Q$ is any representative of an element $\jjet s\jjet t q(t,s)\in\T\T Q$. {\change In the induced coordinates $(q^a,\dot q^b, q'^c, \dot q'^d)$ on $\T\T Q$, the flip interchanges coordinates $\dot q^a$ and $q'^a$ (which corresponds to differentiation with respect to parameters $t$ and $s$, respectively). That is
$$\kappa_Q:(q^a,\dot q^b, q'^c, \dot q'^d)\longmapsto(q^a,q'^b, \dot q^c, \dot q'^d)\ . $$  
A geometric idea behind $\kappa_Q$ is very simple. Namely, we change our point of view on the homotopy $q(t,s)$ in $Q$: instead of treating it as an $s$-parameter family of curves in $t$, we treat it as a $t$-parameter family of curves in $s$.   
}

\paragraph{Anholonomic frames.}
In our work we shall, however, use also another coordinate system $(q^a,v^b)$ on $\T Q$ associated with a given local frame (local basis) $\{e_a\}$ of $\T Q$. In other words, $v^b:=\<e^b,\cdot>:\T Q\ra\R$, where $\{e^b\}$ is the local coframe dual to $\{e_a\}$, i.e., $\<e^b,e_a>=\delt ba$. Such coordinates were considered  in the context of nonholonomic constraints by Crampin and Mestdag \cite{Crampin_Mestdag_2010}, who call $\{e_a\}$ an \emph{anholonomic frame}. To describe the passage between the two coordinate systems $(q^a,\dot q^b)$ and $(q^a,v^b)$, introduce a family of transition matrices $\A ab(q)$ relating the two local frames 
$$e_b=\A ab(q)\pa_{q^a}\ ,$$  
which implies
\begin{equation}\label{eqn:trans_coord}
\dot q^a=\A ab(q)v^b. 
\end{equation}
The above formula is useful when describing the tangent lift of a curve in $Q$. Let, namely, $q:I\ra Q$ be a smooth curve described locally by $q(t)\sim(q^a(t))$. Its \emph{tangent lift}, which we will denote by $\jet q(t)$, {\change or $\jet_tq(t)$ to emphasize the role of the variable $t$}, is a curve in $\T Q$ described locally by 
\begin{align*}
&\jet_tq(t)\sim\left(q^a(t),\dot q^b(t)\right) 
\intertext{in coordinates $(q^a,\dot q^b)$, or}
&\jet_tq(t)\sim\left(q^a(t),v^b(t)\right) 
\end{align*}
in coordinates $(q^a,v^b)$, where $\dot q^a(t)$ and $v^b(t)$ are related by \eqref{eqn:trans_coord}, i.e., $\dot q^a(t)=\A ab(q(t))v^b(t)$.\medskip 

{\change Coordinates $(q^a,v^b)$ on $\T Q$ induce coordinates $(q^a,v^b, q'^c, v'^d)$ on the second tangent bundle $\T\T Q$ by $q'^c:=\<\dd q^c,\cdot>$ and $v'^d=\<\dd v^d,\cdot>$. In these new coordinates $\kappa_Q$ takes a more general form
\begin{equation}\label{eqn:kappa_coord}
\kappa_Q:(q^a,v^b,q'^c,v'^d)\longmapsto (q^a,\und{v}^b, \A ce(q)v^e,v'^d+\br def(q)\und{v}^e v^f)\ .
\end{equation}
Here, the coordinates $\und{v}^a$  on $TQ$ are related to $q'^a$ via \eqref{eqn:trans_coord}, i.e., $q'^a=\A ab(q)\und{v}^b$, while $\br abc(q)$ satisfy} \newnew{Chyba logiczniej jest zamienić tutaj kolejność: najpierw mówimy, że możemy wyliczyć odpowiedną formułę wpółrzędnościową, a potem cudownie okazuje się, że współczynniki tak naprawdę pochodzą od nawiasu Liego}
\begin{equation}
\label{eqn:def_c_abc}
\A sa(q)\br abc(q)=\A db(q)\frac{\pa}{\pa q^d}\A sc(q)-\A dc(q)\frac{\pa}{\pa q^d}\A sb(q)\ .
\end{equation} 
{\change Actually, $\br abc(q)$} are structure functions expressing the Lie bracket of the vector fields of the frame $\{e_a\}$, that is, 
$$[e_b,e_c]=\br abc(q) e_a.$$
This can be easily deduced form formula \eqref{eqn:trans_coord} and the fact that $[\pa_{q^b},\pa_{q^c}]=0$.

The presence of the coefficients of the Lie bracket in formula \eqref{eqn:kappa_coord} suggests a possible relation between the flip $\kappa_Q$ and the Lie bracket of vector fields on $Q$. This is indeed the case, as explained in Par. 6.13 of \cite{Kolar_Michor_Slovak_1993}. As a consequence, a smooth distribution $D\subset \T Q$ is integrable if and only if $\T D$ is $\kappa_Q$-invariant {\change (cf. Proposition~\ref{prop:int_distr} below).}


\newpage
\section{An abstract approach to constrained Lagrangian dynamics}\label{sec:var_calc}

In this section, we look at the (constrained) Lagrangian dynamics from a slightly more formal and more abstract point of view than it is usually done in the literature. Such an approach will allow us to treat different constrained variational problems in a unified way. Similar ideas have already been presented (see for example \cite{GG_2008, Gracia_Martin_Munos_2003, Tul_2004}).

\paragraph{{\change Admissible paths and admissible variations}.} 
The standard variational problem on $\T Q$ is constituted by a function $L:\T Q\ra\R$ called a \emph{Lagrangian}. Given any smooth path $\gamma:I=[t_0,t_1]\ra \T Q$ we can define the \emph{action} along $\gamma$ by the formula
$$S_L(\gamma):=\int_IL(\gamma(t))\dd t.$$ 
In the standard problem we consider  $S_L$ only for {\change \emph{admissible paths}, i.e.} $\gamma:I\ra \T Q$ which are tangent prolongations of curves in $Q$. That is, $\gamma(t)=\jet_t q(t)=(q(t),\dot q(t))$, where $q(t)=\tau_Q(\gamma(t))$ is the base projection of $\gamma(t)$.

A \emph{variation} along a path $\gamma$ is simply a vector field along $\gamma$, i.e, a curve $\del \gamma:I\ra\T\T Q$ which projects onto $\gamma$ under $\tau_{\T Q}$. Among all variations along a fixed $\gamma$ we can distinguish the class of variations with \emph{vanishing end-points}, i.e., $\del \gamma$ such that $\T\tau_Q(\del\gamma(t))\in\T Q$ vanishes at $t_0$ and $t_1$.

In the standard variational problem we consider \emph{variations generated by homotopies} {\change(see Figure~\ref{fig:homotopy})}. Let namely $q(t,s)\in Q$ be a one-parameter family of base paths. This homotopy defines a natural variation 
$$\del\gamma(t)=\jet_s\big|_{s=0}\jet_t q(t,s)$$
along the path $\gamma(t)=\jet_tq(t,0)=(q(t,0),\dot q(t,0))$, where the dot  stands for the derivative with respect to~$t$. Geometrically such $\del\gamma$ is \emph{generated} by a curve $\xi(t):=\jet_s\big|_{s=0}q(t,s)\in\T_{q(t,0)}Q$ with the help of the canonical flip $\kappa_Q:\T\T Q\ra\T\T Q$:
$$\del\gamma(t)=\jet_s\big|_{s=0}\jet_t q(t,s)=\kappa_Q\left(\jet_t\jet_s\big|_{s=0}q(t,s)\right)=\kappa_Q(\jet_t\xi(t)).$$
To emphasize the role of the \emph{generator} $\xi(t)$ (called also sometimes an \emph{infinitesimal variation} {\change or a \emph{virtual displacement}}) we will denote 
$$\del_\xi\gamma:=\del\gamma\ .$$
{\change Variations of this form will be called \emph{admissible}.} \newnew{Pojęcie admissible variation było używane, ale nie było zdefiniowane}

\begin{figure}[h]
\begin{center}
\includegraphics[width=0.45\textwidth]{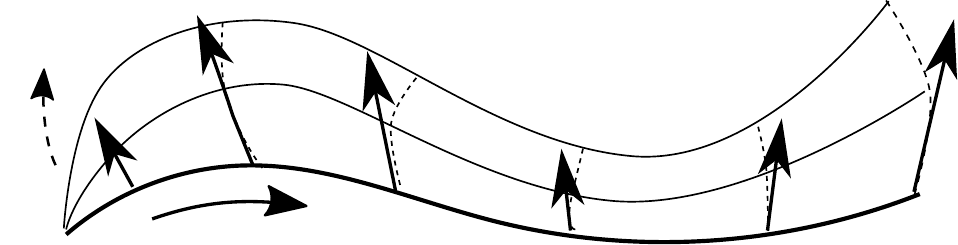}
\put(-150,0){$q(t,0)$}
\put(-115,35){$q(t,s)$}
\put(-3,30){$\xi(t)$}
\put(-187,3){$t$}
\put(-235,23){$s$}
\end{center}
\caption{A homotopy-generated variation and its generator~$\xi$. Note that the homotopy with a fixed end-point(s) corresponds to a generator vanishing at that end-point(s).}
\label{fig:homotopy}
\end{figure}

Assume now that, in local coordinates $(q^a,v^b)$ on $\T Q$, the path $\gamma(t)=\jet_tq(t)$ corresponds to a curve $(q^a(t),v^b(t))$ and the generator $\xi(t)$ to $(q^a(t),w^b(t))$.  Due to formula \eqref{eqn:kappa_coord}, the variation of $\gamma(t)$ generated by $\xi(t)$ reads locally, in coordinates  $(q^a,v^b, q'^c, v'^d)$,  as
\begin{equation}\label{eqn:var_coord}
\del_\xi\gamma(t)\sim\left(q^a(t),v^b(t),\A ce(q(t))w^e(t),\dot w^d(t)+\br dab(q(t)) v^a(t)w^b(t)\right).
\end{equation}
We will need the following three facts about admissible variations which follow directly from the above coordinate description. 
\begin{prop}\label{prop:prop_variation}
\begin{enumerate}[(i)] The admissible variation $\del_\xi\gamma$ has the following properties
\item \label{point:prob_var_1} it is linear with respect to $\xi$, i.e., 
\begin{equation}\label{eqn:var_linear}
\del_{\xi+\xi'}\gamma=\del_\xi\gamma+\del_{\xi'}\gamma,
\end{equation}
where the addition is performed with respect to the tangent vector bundle structure on $\tau_{\T Q}:\T\T Q\ra\T Q$. 
\item \label{point:prob_var_2} it projects to $\gamma$ under $\tau_{\T Q}$. 
\item \label{point:prob_var_3} it projects to $\xi$ under $\T\tau_Q$.
\end{enumerate}
\end{prop}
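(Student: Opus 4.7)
The plan is to derive all three properties directly from the local expression (\ref{eqn:var_coord}) for $\delta_\xi\gamma(t)$. First I would fix the book-keeping by recording how the two vector bundle structures on $\T\T Q$ act in the chart $(q^a, v^b, \dot q^c, \dot v^d)$: the projection $\tau_{\T Q}$ discards the ``dotted'' pair $(\dot q^c, \dot v^d)$, while $\T\tau_Q$, being the tangent lift of $\tau_Q$, discards the pair $(v^b, \dot v^d)$. With these conventions in place, the remainder is essentially an inspection of (\ref{eqn:var_coord}).

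For part (\ref{point:prob_var_2}), I would simply read off the first two coordinates of $\delta_\xi\gamma(t)$, namely $(q^a(t), v^b(t))$, and recognise this as $\gamma(t) = \jet_t q(t)$. For part (\ref{point:prob_var_3}), the projection $\T\tau_Q$ leaves $(q^a(t), \dot q^c(t)) = (q^a(t), \A ce(q(t)) w^e(t))$, and I would then invoke the transition rule (\ref{eqn:trans_coord}) to identify this with the chart-representation of $\xi(t) = w^e(t) e_e\big|_{q(t)}$ in the standard coordinates $(q^a, \dot q^c)$ on $\T Q$.

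For (\ref{point:prob_var_1}), the addition takes place in the $\tau_{\T Q}$-fibers, so $(q^a, v^b)$ is held fixed while the ``dotted'' coordinates add componentwise. Using (\ref{eqn:var_coord}) for the generators $(w^b)$ of $\xi$ and $(w'^b)$ of $\xi'$, I would sum the ``dotted'' parts to obtain
$$\bigl(\A ce(q)(w^e + w'^e),\ (\dot w^d + \dot w'^d) + \br dab(q) v^a (w^b + w'^b)\bigr),$$
which coincides with the ``dotted'' part produced by substituting the generator $(w^b + w'^b)$ of $\xi + \xi'$ directly into (\ref{eqn:var_coord}). The key point to stress is that the bilinear term $\br dab(q) v^a w^b$ remains linear in $w$ because $v^a$ is a coordinate of the fixed base point $\gamma(t)$, not of the generator.

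The main hazard is conceptual rather than computational: one must not conflate the two vector bundle structures on $\T\T Q$. Linearity of $\delta_\xi\gamma$ in $\xi$ is formulated with respect to $\tau_{\T Q}$, not $\T\tau_Q$, and in the latter $v^b$ would vary with $\xi$ so that the bilinear $C$-term would defeat linearity. Once this distinction is fixed, the verification reduces to direct inspection of (\ref{eqn:var_coord}), requiring no further analytic input.
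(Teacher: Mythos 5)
Your proof is correct and takes essentially the same route as the paper, which simply asserts that all three facts ``follow directly from the above coordinate description'' \eqref{eqn:var_coord}; your inspection of the chart formula, including the careful bookkeeping of the two projections and the observation that the $\tau_{\T Q}$-fiberwise addition holds $(q^a,v^b)$ fixed so the bilinear $\br dab(q)v^aw^b$ term stays linear in the generator, is exactly the intended verification.
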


Note also that for any $q(t)\in Q$ and any $\xi(t)\in \T_{q(t)}Q$ there exists a homotopy $q(t,s)$ such that $q(t,0)=q(t)$ which generates  the variation $\del_\xi\gamma$ along $\gamma(t)=\jet_tq(t)=(q(t),\dot q(t))$. Observe that variations with vanishing end-points correspond to $\xi(t)$'s satisfying $\xi(t_0)=\xi(t_1)=0$.

\paragraph{{\change Variational principles.} }
The {\change \emph{standard variational problem}} is to search of all {\change admissible paths $\gamma:I\ra \T Q$} such that, for every {\change admissible} variation $\del\gamma$ with vanishing end-points (as considered above), the associated \emph{variation of the action} $S_L$  at $\gamma$ along $\del\gamma$
$$\<\dd S_L(\gamma),\del\gamma>:=\int_I\<\dd L(\gamma(t)),\del\gamma(t)>\dd t$$
vanishes. 

\begin{remark}
The usage of the symbol $\dd S_L(\gamma)$ can be made rigorous in the framework of analysis on Banach manifolds. The action $S_L$ can be understood as a function on the manifold of paths $\gamma$ of a certain class, whereas $\del\gamma$'s are elements of the tangent space to that manifold. The interested reader may consult \cite{Martinez_2008}.
\end{remark}

Motivated by the standard situation we propose the following general definition in the spirit of \cite{Tul_2004}.
\begin{definition}\label{def:var_princ}
A \emph{variational principle} on $\T Q$ is constituted by a triple 
$$\PP:=(L,\TT,\WW)$$ 
consisting of a Lagrangian function $L:\T Q\ra\R$, a set of \emph{admissible paths} $\TT\subset C^\infty(I,\T Q)$, and a set of \emph{admissible variations} $\WW\subset C^\infty(I,\T\T Q)$ along admissible paths. For a given admissible path $\gamma\in\TT$, we will use symbols $\WW_\gamma$ for admissible variations along $\gamma$, and $\WW_\gamma^0$ for admissible variation along $\gamma$ with vanishing end-points.

An admissible path $\gamma \in\TT$ is called an \emph{extremal} (or a \emph{trajectory}) of the variational principle $\PP$ if and only if 
$$\<\dd S_L(\gamma),\del\gamma>=0\quad\text{for every $\del\gamma\in\WW_\gamma^0$},$$
i.e., $\gamma$ is a critical point of the action $S_L$ (see \cite{Martinez_2008}) relative to admissible variations {\change of $\PP$} with vanishing end-points. The set of all extremals of $\PP$ will be denoted by $\Gamma_\PP\subset\TT$, {\change and called the \emph{dynamics of $\PP$.}} Notice that searching for a critical point of $S_L$ relative to paths $\gamma$ such that $\del\gamma\in \WW^0_\gamma$ need not correspond, in general, to minimization or maximization of $S_L$. 
\newnew{Zdefiniowałem pojęcie dynamiki}
\end{definition} 

\begin{remark}\label{rem:vanishing_end_points}
Despite the fact that in the above definition of an extremal we used  admissible variations with vanishing end-points only, the role of the end-points should not be underestimated. In fact, the full variational principle should describe the reaction of the system to an \emph{arbitrary} admissible variation, i.e., it should contain not only the information about the extremal, but also about the boundary terms which describe the initial and final momenta of the system (cf. Sec. 15 in \cite{Tul_2004}). The need of including the non-vanishing end-points becomes apparent also in some natural situations in  variational calculus and control theory, where more general boundary conditions (like transversality) are needed. 
\end{remark}

\begin{remark} Clearly, the {\change solutions} of the standard variational problem on $\T Q$ (i.e., solutions of the associated Euler-Lagrange equations {\change for a Lagrangian $L:\T Q\ra \R$}) are extremals of the following {\change\emph{standard variational principle}} $\PP^{st}_{\T Q}=(L,\TT^{st}_{\T Q},\WW^{st}_{\T Q})$ where
\begin{align*}
&\TT^{st}_{\T Q}=\{\gamma(t)\ |\ \gamma(t)=\jet_tq(t)=(q(t),\dot q(t))\text{ and } q:I\ra Q\}\quad\text{and}\\
&(\WW^{st}_{\T Q})_{\gamma(t)}=\{\del_{\xi(t)}\gamma(t)=\kappa_Q(\jet_t\xi(t))\ |\ \xi(t)\in\T_{q(t)}Q \}.
\end{align*}
We shall refer to the elements of $\WW^{st}_{\T Q}$ as to \emph{standard admissible variations}. 
\end{remark}


\paragraph{Constraints.} 
The above Definition~\ref{def:var_princ}  turns out to be particularly useful in the context of constrains. We say that $\wh{\PP}=(L,\wh\TT,\wh\WW)$ is a \emph{restricted variational principle}  of $\PP=(L,\TT,\WW)$ if it is obtained from $\PP$ by shrinking the set of admissible trajectories and/or admissible variations, i.e., $\wh\TT\subseteq\TT$ and/or $\wh\WW\subseteq\WW$. One should think that the principle $\PP$ describes an unconstrained system and $\wh\PP$ is the same systems with imputed constraints. Usually these restrictions are somehow related to additional geometric structures on the bundle $\T Q$. 
Two important examples of such a situation are vakonomic and nonholonomic variational principles associated with a submanifold $C\subset \T Q$, being two restrictions of the standard variational principle $\PP^{st}_{\T Q}$. Below we shall show that the extremals of these restricted variational principles constitute the vakonomically and nonholonomically constrained Lagrangian dynamics in the standard sense. \newnew{Wyróżniam to jako definicję - jest kluczowa dla naszych rozważań}

\begin{definition}\label{def:vak_var_princip}
{\change Let $C\subset \T Q$ be a submanifold.}
We define the \emph{vakonomic variational principle} $\PP^{vak}_C=(L,\TT^{vak}_C,\WW^{vak}_C)$ associated with $C$ {\change as a restriction of the standard variational principle $\PP^{st}_{\T Q}=(L,\TT^{st}_{\T Q},\WW^{st}_{\T Q})$, where we consider only those  admissible paths that belong to $C$ and those admissible variations that are tangent to $C$.} That is 
\begin{align*}
&\TT_{C}^{vak}=\TT_C:=\{\gamma\in\TT^{st}_{\T Q}\ |\ \gamma(t)\in C \text{ for every $t\in I$}\}\subset\TT^{st}_{\T Q}
\intertext{and}
&\WW_{C}^{vak}:=\{\del_\xi\gamma\in\WW^{st}_{\T Q}\ |\ \del_{\xi(t)}\gamma(t)\in\T_{\gamma(t)}C \text{ for every $t\in I$}\}\subset \WW^{st}_{\T Q}.
\end{align*}  
\end{definition}

Observe that {\change elements of $\WW^{vak}_C$  are precisely} vakonomic variations present in the literature (see e.g., \cite{Arnold_Koz_Nies_2010, Cardin_Favretti_1996, Leon_2012}). Clearly, any homotopy {\change $q(t,s)\in Q$ such that $\gamma(t,s):=\jet_t q(t,s)\subset C$} produces a variation in $\WW^{vak}_C$. Conversely, every variation in $\WW^{vak}_C$ can be obtained from a homotopy {\change $q(t,s)\in Q$ such that $\gamma(t,s):=\jet_t q(t,s)$} lies in $C$ up to $o(s)$-terms.\footnote{Such relaxation of the condition $\gamma(t,s)\subset C$, allows to exclude the problems of singular trajectories and abnormal extremals (see Ssec. 1.4 in \cite{Arnold_Koz_Nies_2010}).} In light of this observation it is clear that the extremals of {\change the vakonomic variational principle} $\PP^{vak}_C$ {\change are precisely} the extremal points of $S_L$ on the set of admissible paths $\TT^{vak}_C$, i.e., {\change they are trajectories of the vakonomically constrained system on $Q$ (constituted by $L$ and $C$) in the usual sense present in the literature \cite{Arnold_Koz_Nies_2010}. For this reason we will simply speak about \emph{vakonomic dynamics} meaning the dynamics of the vakonomic variational principle (i.e. the set of all exteemals of $\PP^{vak}_C$). We will also use an  abbreviated symbol $\Gamma^{vak}_C$ (instead of $\Gamma_{\PP^{vak}_C}$) to denote these dynamics. }

Observe that although the set of \emph{vakonomic admissible variations} $\WW_{C}^{vak}$ is characterized by the simple condition $\WW_{C}^{vak}=\WW^{st}_{\T Q}\cap\C^\infty(I,\T C)$, in general, it is difficult to specify the generators $\xi(t)$ for which a given admissible variation $\del_{\xi(t)}\gamma(t)$ of the standard variational principle belongs to $\T C$. Note also that since the vakonomic variations are tangent to $C$, the vakonomic dynamics are determined by the restriction of $L$ to $C$.  

\newnew{Myślę, że to jest dobry moment żeby wspomnieć o mnożnikach Lagrange'a dla przypadku wakonomicznego. Formułujemy dla tego przypadku nasze wyniki a nie został on zbyt dobrze wyjaśniony.} 
{\change
\begin{remark}\label{rem:multipliers}
In practice, finding extremals of the vakonomic variational principle can be reduced to finding extremals of the standard variational principle but with a modified Lagrangian. Indeed, observe that since $\del_\xi\gamma\in \T C$, we can add to the Lagrangian $L$ any function $\phi(t,q,v)$ on $\R\times\T Q$ vanishing at $(q,v)\in C\subset\T Q$ without changing the value of the variation $\<\dd S_L(\gamma), \del_\xi\gamma>$. Thus if $\gamma\in\TT_C$ is an extremal of the standard variational principle $\PP^{st}_{\T Q}$ with the new Lagrangian $L+\phi$, then $\gamma$ is also an extremal of the vakonomic variational principle $\PP^{vak}_C$ with the initial Lagrangian $L$. This reasoning gives sufficient (and also necessary -- see e.g., Thm. 4.1 in \cite{Cardin_Favretti_1996} or Lemma 3 in \cite{Gracia_Martin_Munos_2003}) conditions for extremals of $\PP^{vak}_C$. In practice, we can choose the new Lagrangian in the form
$$
\wt{L}(q,v,t):=L(q,v)+\mu_\alpha(t)\Phi^\alpha(q,v)\ ,
$$
where $C$ is locally described by equations $\Phi^\alpha(q,v)=0$, for $\alpha=1,\hdots, k$, and $\mu_\alpha(t)$ are arbitrary functions, known usually as \emph{multipliers}. 
\end{remark}
}

\medskip

With the same submanifold $C\subset\T Q$ one can associate a different construction of a nonholonomic variational principle. \newnew{formalizuję tę definicję}
\begin{definition}\label{def:nh_var_princip}
{\change Let $C\subset \T Q$ be a submanifold. A }
\emph{nonholonomic variational principle} $\PP^{nh}_C=(L,\TT^{nh}_C,\WW^{nh}_C)$  {\change associated with $C$ is a restriction of the standard variational principle
$\PP^{st}_{\T Q}=(L,\TT^{st}_{\T Q},\WW^{st}_{\T Q})$, where we consider only these  admissible paths that belong to $C$ }
$$\TT_{C}^{nh}:=\TT_{C},$$
{\change and the set $\WW^{nh}_C$ is defined by means of the \emph{Chetaev's principle}. More precisely, $\WW^{nh}_C$ consists of these admissible variations $\del_\xi\gamma\in\WW_{\T Q}^{st}$ that are generated by an infinitesimal variation $\xi(t)$ whose} vertical lift $\V_{\gamma(t)}\xi(t):=\jet_{s=0}(\gamma(t)+s\xi(t))$ takes values in $\T C$:
$$\WW_{C}^{nh}=\{\del_\xi\gamma\in\WW_{\T M}^{st}\ |\ \V_{\gamma(t)}\xi(t)\in\T_{\gamma(t)} C \text{ for every $t\in[t_0,t_1]$}\}.$$
Notice that $\WW^{nh}_C\subset \T C\cap \V\T Q$, where $\V\T Q$ stands for the vertical distribution on $\T Q$ defined as the kernel of $\T\tau_Q:\T\T Q\ra\T Q$. We shall refer to the elements of $\WW^{nh}_{C}$ as to \emph{nonholonomic admissible variations}.
\end{definition}

 By the very definition of Chetaev's principle it is clear that {\change extremals of the nonholonomic variational principle $\PP^{nh}_C$ are precisely trajectories of the nonholonomically constrained system on $Q$ (constituted by $L$ and $C$) in the standard sense present in the literature \cite{Arnold_Koz_Nies_2010}. For this reason we will simply speak about \emph{nonholonomic dynamics} meaning the dynamics of the nonholonomic variational principle (i.e. the set of all extemals of $\PP^{nh}_C$). We will also use the abbreviated symbol $\Gamma^{nh}_C$ (instead of $\Gamma_{\PP^{nh}_C}$) to denote these dynamics. }

It is known that the extremals of $\PP^{nh}_C$ do not correspond to minimization (maximization) of $S_L$. In fact, they are not ''the shortest'' but ''the straightest'' paths as noticed by Hertz (see \cite{Leon_2012} and the references therein).

In the special case when the constraints are linear (resp. affine), meaning that $C=D$, where $D$ is a linear distribution (resp. $C=X+D$, where $X$ is a vector field, and $D$ a linear distribution), Chetaev's principle becomes the well-known \emph{d'Alembert's principle}: we consider admissible variations $\del_\xi\gamma\in\WW_{\T Q}^{st}$ that are generated by infinitesimal variations $\xi(t)$ with values in $D$ (in both, linear and affine cases):
$$\WW_{D}^{nh}=\{\del_\xi\gamma\in\WW_{\T Q}^{st}:\xi(t)\in D_{q(t)} \text{ for every $t\in I$}\}.$$
In this case we have an explicit information  about the infinitesimal variations, i.e., $\xi(t)\in D_{q(t)}$. Note, however, that the variations $\del_\xi\gamma$ will, in general, not be tangent to $C=D$ (resp., to $X+D$). For this reason the knowledge of $L|_D$ is not sufficient to  study nonholonomically constrained dynamics. For a deeper discussion of the constrained dynamics in a more general setting of algebroids consult \cite{GG_2008,GLMM_2009}. 

{\change Tu summarize the above considerations on restricted variational principles (cf. \cite{Gracia_Martin_Munos_2003}):
\begin{itemize}
	\item extremals $\Gamma^{vak}_C$ of the vakonomic variational principle $\PP^{vak}_C$ are the trajectories of the vakonomically constrained system on $Q$ (associated with $C$) in the usual sense, and 
	\item extremals $\Gamma^{nh}_C$ of the nonholonomic variational principle $\PP^{vak}_C$ are the trajectories of the nonholonomically constrained system on $Q$ (associated with $C$) in the usual sense. 
\end{itemize}
}


\paragraph{A comparison of variational principles.}
Looking at admissible variations rather than equations of motion will allow us to compare extremals of different variational principles in a simple manner. Recall that $\Gamma_{\PP}$ denotes the set of extremals of a given variational principle $\PP$.  

\begin{prop}\label{prop:constraints} Assume that $\wh\PP =(L,\wh\TT ,\wh\WW)$ is a restricted variational principle of $\PP=(L,\TT,\WW)$, that is, $\wh\TT \subseteq\TT$ and $\wh\WW\subseteq\WW$ (i.e., for any $\gamma\in\wh\TT$ we have $\wh\WW_\gamma\subseteq\WW_\gamma$). Then
$$\Gamma_{\PP}\cap\wh\TT \subseteq\Gamma_{\wh\PP}.$$
\end{prop}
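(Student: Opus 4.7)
The plan is to unwind the definitions and observe that the inclusion is essentially tautological: having fewer admissible variations to vanish against makes it easier, not harder, to be an extremal.

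First I would pick an arbitrary $\gamma \in \Gamma_{\PP} \cap \wt\TT$ and check that $\gamma$ qualifies as an extremal of $\wt\PP$. Since $\gamma \in \wt\TT$, it is an admissible path of $\wt\PP$, so it makes sense to ask whether $\gamma \in \Gamma_{\wt\PP}$. By definition this amounts to verifying
\[
\langle \dd S_L(\gamma), \del\gamma \rangle = 0 \quad\text{for every } \del\gamma \in \wt\WW^0_\gamma.
\]

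The key step is to observe that the hypothesis $\wt\WW \subseteq \WW$ gives $\wt\WW_\gamma \subseteq \WW_\gamma$, and hence $\wt\WW^0_\gamma \subseteq \WW^0_\gamma$ (the vanishing end-point condition does not depend on whether we view the variation as belonging to $\wt\WW$ or $\WW$). Since $\gamma \in \Gamma_\PP$, we already know that $\langle \dd S_L(\gamma), \del\gamma \rangle = 0$ for every $\del\gamma \in \WW^0_\gamma$. Restricting this property to the subset $\wt\WW^0_\gamma$ yields exactly the required condition for $\gamma \in \Gamma_{\wt\PP}$.

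There is no real obstacle here; the statement captures the trivial but conceptually important fact that \emph{shrinking the class of admissible variations can only enlarge the set of extremals} (among paths that remain admissible). The only thing worth double-checking is that the notion of variation ``with vanishing end-points'' is intrinsic to the variation itself (a property of the curve $\del\gamma \in C^\infty(I, \T\T Q)$ via $\T\tau_Q$), so that $\WW^0$ is obtained from $\WW$ by the same cut for both principles; this ensures $\wt\WW^0_\gamma = \wt\WW_\gamma \cap \WW^0_\gamma \subseteq \WW^0_\gamma$ and closes the argument.
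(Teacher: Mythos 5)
Your proof is correct and follows exactly the paper's own one-line argument: since $\wt\WW^0_\gamma \subseteq \WW^0_\gamma$, the vanishing of $\<\dd S_L(\gamma),\del\gamma>$ on all of $\WW^0_\gamma$ trivially persists on the subset $\wt\WW^0_\gamma$. Your extra remark that the vanishing end-point condition is intrinsic to the variation (via $\T\tau_Q$) is a sensible precaution, but does not change the argument.
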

\begin{proof}Indeed, if $\gamma\in\wh\TT $ satisfies $\<\dd S_L(\gamma),\del\gamma>=0$ for every $\del\gamma\in \WW_\gamma^0$, then also $\<\dd S_L(\gamma),\wh\del \gamma>=0$ for any $\wh\del \gamma\in\wh\WW_\gamma ^0\subseteq\WW_\gamma^0$. 
\end{proof}

\begin{remark}\label{rem:not_trivial}
The above proposition looks trivial it allows, however, for an immediate derivation of some classical results such as:
\begin{itemize}
\item Proposition 6.2 in \cite{Cortes_Leon_Inn_2003},  which states that {\change every solution of the standard variational problem that respects the constraints $C$ is simultaneously} a trajectory of a nonholonomically, as well as, vakonomically constrained system associated with $C$. This is obvious in light of Proposition~\ref{prop:constraints} as vakonomic and nonholonomic variational principles are restrictions of the standard variational principle.
\item Remark following Theorem 2  in \cite{Crampin_Mestdag_2010}, which states that vakonomic trajectories with trivial multipliers $\mu_a(t)=0$ (cf. Remark~\ref{rem:multipliers}) are also nonholonomic trajectories. This again is clear as any vakonomic extremal with trivial multipliers is, in fact, an extremal of the standard variational problem and we can repeat the above reasoning.
\item Theorem 3.2 (i) of \cite{Favretti_1998}, which states that for every sub-Riemannian geodesic problem with a totally geodesic constraints distribution (i.e., such that every unconstrained geodesic tangent to the constraints at a point remains tangent at all its points) the nonholonomic geodesics are precisely the unconstrained geodesic respecting the constraints. This fact follows again from Proposition~\ref{prop:constraints} implying that the unconstrained geodesics respecting the constraints are also the nonholonomic ones. Moreover, by the assumptions and by the uniqueness of (nonholonomic) geodesics with a given initial velocity, we get the equality of these two sets.  Theorem 3.2 (ii) of \cite{Favretti_1998}, stating that in this case every nonholonomic geodesic is also a vakonomic one is again clear in the light of Proposition~\ref{prop:constraints}. From this simple reasoning we see that the additional assumption present in Theorem 3.2 (that the distribution perpendicular to the constraints is integrable) is superfluous.
\end{itemize}  
\end{remark}

More generally, we can compare two variational principles $\PP=(L,\TT,\WW)$ and $\wh\PP=(L,\wh\TT,\wh\WW)$ defined on the same manifold $\T Q$ and with the same Lagrangian $L$, even if $\WW$, $\TT$ and $\wh\WW$, $\wh\TT $ are not so directly related, provided that we have some information about infinitesimal symmetries of $L$. 

\begin{prop}\label{prop:two_var_prob}
Consider an extremal $\gamma\in\Gamma_{\PP}\cap\wh\TT $. Assume that for every variation $\wh\del \gamma\in\wh{\WW}_\gamma^0$ there exists a variation $\del\gamma\in\WW^0_\gamma$ such that $\wh\del\gamma(t)-\del\gamma(t)\in(\ker\dd L)_{\gamma(t)}$ for every $t\in[t_0,t_1]$. Then
$$\gamma\in\Gamma_{\wh\PP }.$$
\end{prop}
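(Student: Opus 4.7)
The plan is to unfold the definition of an extremal and exploit the linearity of the pairing with $\dd L$. Concretely, to show $\gamma \in \Gamma_{\wt\PP}$, I need to verify that $\<\dd S_L(\gamma), \wt\del\gamma> = 0$ for every $\wt\del\gamma \in \wt\WW^0_\gamma$. So I would fix an arbitrary such $\wt\del\gamma$, invoke the hypothesis to obtain an associated $\del\gamma \in \WW^0_\gamma$ with $\wt\del\gamma(t) - \del\gamma(t) \in (\ker \dd L)_{\gamma(t)}$ for all $t$, and then split the action variation into two pieces.

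The first observation to emphasize is that the subtraction $\wt\del\gamma(t) - \del\gamma(t)$ makes sense: both variations project to $\gamma(t)$ under $\tau_{\T Q}$ (by property \eqref{point:prob_var_2} of Proposition \ref{prop:prop_variation}, or directly from them being admissible along $\gamma$), so both live in the fiber $\T_{\gamma(t)}\T Q$ of the vector bundle $\tau_{\T Q}:\T\T Q \ra \T Q$, where subtraction is defined. Since $\dd L(\gamma(t))$ is a linear functional on this fiber, we obtain the pointwise identity
\begin{equation*}
\<\dd L(\gamma(t)), \wt\del\gamma(t)> = \<\dd L(\gamma(t)), \del\gamma(t)> + \<\dd L(\gamma(t)), \wt\del\gamma(t)-\del\gamma(t)>,
\end{equation*}
and the second term on the right vanishes by the assumption $\wt\del\gamma(t) - \del\gamma(t) \in (\ker\dd L)_{\gamma(t)}$.

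Integrating over $I$, I conclude that $\<\dd S_L(\gamma), \wt\del\gamma> = \<\dd S_L(\gamma), \del\gamma>$. Since $\gamma \in \Gamma_\PP$ by assumption and $\del\gamma \in \WW^0_\gamma$, the right-hand side is zero. As $\wt\del\gamma$ was arbitrary in $\wt\WW^0_\gamma$, we obtain $\gamma \in \Gamma_{\wt\PP}$, as required.

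I do not foresee a real obstacle here: the argument is essentially bookkeeping around the linearity of $\dd L$ in the fiber direction and a correct reading of what ``$\wt\del\gamma - \del\gamma \in \ker \dd L$'' means. The only subtlety to be careful about is to make sure that the substitution $\del\gamma$ is indeed in $\WW^0_\gamma$ (i.e.\ with vanishing end-points), not just in $\WW_\gamma$ — but this is explicitly part of the hypothesis, so no further argument is needed. Proposition \ref{prop:constraints} is then recovered as the trivial special case where one can simply take $\del\gamma := \wt\del\gamma$ itself.
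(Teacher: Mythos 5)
Your proof is correct and follows essentially the same route as the paper's: split $\<\dd S_L(\gamma),\wt\del\gamma>-\<\dd S_L(\gamma),\del\gamma>$ via linearity of $\dd L$ on the fibers of $\tau_{\T Q}$, kill the difference term using the $\ker\dd L$ hypothesis, and conclude from $\gamma\in\Gamma_\PP$ with $\del\gamma\in\WW^0_\gamma$. Your added remark that the subtraction is well-defined (both variations lying in $\T_{\gamma(t)}\T Q$) is a sound clarification the paper leaves implicit.
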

\begin{proof}
Take an extremal $\gamma\in\Gamma_{\PP}\cap\wh\TT $. We would like to show that $\<\dd S_L(\gamma),\wh\del \gamma>=0$ for every $\wh\del \gamma\in\wh\WW ^0_\gamma$. Now for any $\del\gamma$ satisfying the assumptions, we have
$$\<\dd S_L(\gamma),\wh\del \gamma>-\Big\langle\dd S_L(\gamma),\del\gamma\Big\rangle=\int_I\<\dd L(\gamma(t)),\wh\del\gamma(t)-\del\gamma(t)>\dd t=0.$$
Hence $\langle\dd S_L(\gamma),\wh\del \gamma\rangle=\<\dd S_L(\gamma),\del\gamma>$ which equals 0 as $\gamma$ is an extremal of $\PP$ and $\del \gamma\in\WW_\gamma^0$.
\end{proof}
The condition $\wh\del\gamma-\del\gamma\in (\ker \dd L)_\gamma$ from the above proposition may be understood as a symmetry condition. Indeed, it means that the set of admissible variations $\wh{\WW}_\gamma^0$ is contained in $\WW_\gamma^0$ up to infinitesimal symmetries of $L$. 

\paragraph{\change Example -- holonomic constraints.} 
As a simple concrete illustration of our approach to the question of comparing variational principles we can easily prove the following well-known fact (compare e.g., Prop. 2.8 in \cite{Lewis_Murray_1995}). 

\begin{prop}\label{prop:holonomic}. Let $D\subset\T Q$ be a smooth distribution on a manifold $Q$. Then vakonomic and nonholonomic variational principles associated to $D$ (for the same Lagrangian) coincide, that is,  
$$\PP^{vak}_D=\PP^{nh}_D$$
if and only if $D$ is integrable. 
\end{prop}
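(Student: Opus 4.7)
The approach is to reduce the proposition to a statement about the canonical flip $\kappa_Q$ preserving $\T D$, so that Corollary \ref{cor:int_distr} becomes directly applicable. Since by construction $\TT^{vak}_D = \TT^{nh}_D = \TT_D$, the equality of the two principles is equivalent to the equality of the sets of admissible variations $\WW^{vak}_D = \WW^{nh}_D$. Hence I would fix an admissible trajectory $\gamma(t)\in D$ and compare the two types of variation on this $\gamma$: nonholonomic variations are of the form $\del_\xi\gamma(t)=\kappa_Q(\jet_t\xi(t))$ with $\xi(t)\in D_{q(t)}$, while vakonomic variations are of the form $\del_\xi\gamma(t)=\kappa_Q(\jet_t\xi(t))\in \T_{\gamma(t)}D$. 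The key link between these two conditions is the observation that $\xi(t)\in D$ is equivalent to $\jet_t\xi(t)\in \T D$, and, conversely, every element of $\T D$ is of the form $\jet_t\xi(t)$ for some curve $\xi$ lying in $D$.

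For the direction ``$D$ integrable implies $\PP^{vak}_D=\PP^{nh}_D$'' I would use Corollary \ref{cor:int_distr}, which provides $\kappa_Q(\T D)\subseteq \T D$ (and the reverse inclusion by involutivity of $\kappa_Q$). If $\xi(t)\in D$, then $\jet_t\xi(t)\in \T D$, so $\del_\xi\gamma(t)=\kappa_Q(\jet_t\xi(t))\in \T D$, giving $\WW^{nh}_D\subseteq \WW^{vak}_D$. Conversely, if $\del_\xi\gamma(t)\in \T D$ is vakonomic, then applying $\kappa_Q$ and using involutivity yields $\jet_t\xi(t)=\kappa_Q(\del_\xi\gamma(t))\in \T D$, whence $\xi(t)=\tau_{\T Q}(\jet_t\xi(t))\in D$, so $\del_\xi\gamma$ is nonholonomic; this gives the opposite inclusion.

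For the converse, assume $\PP^{vak}_D=\PP^{nh}_D$, so in particular $\WW^{nh}_D\subseteq\WW^{vak}_D$. This means that for every curve $\xi(t)$ taking values in $D$, we have $\kappa_Q(\jet_t\xi(t))\in \T D$. Because every vector in $\T D$ is realised as $\jet_t\xi(t)$ for a suitable curve $\xi$ in $D$, we conclude $\kappa_Q(\T D)\subseteq \T D$, and Corollary \ref{cor:int_distr} then forces $D$ to be integrable.

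The whole argument is essentially a translation of definitions plus the already-proved Corollary \ref{cor:int_distr}, so no serious obstacle is expected; the only small technical point to articulate carefully is the bijection between curves $\xi(t)$ in $D$ and elements of $\T D$ realised as tangent lifts, together with the fact that tangency of $\del_\xi\gamma$ to $D$ is precisely $\kappa_Q(\jet_t\xi(t))\in \T D$. This makes the proof essentially immediate and shows that the classical result becomes completely transparent in the variational-principle language of Section \ref{sec:var_calc}.
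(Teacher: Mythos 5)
Your reduction to $\WW^{vak}_D=\WW^{nh}_D$ and your proof of the implication ``$D$ integrable $\Rightarrow \PP^{vak}_D=\PP^{nh}_D$'' coincide exactly with the paper's argument. The converse, however, contains a genuine gap. It is true that every $V\in\T D$ is the jet of \emph{some} curve $\xi(t)$ lying in $D$ (since $D$ is a submanifold of $\T Q$), but not every such curve is a \emph{generator of a nonholonomic variation}: a generator must be a vector field along the base path $q(t)$ of an admissible trajectory $\gamma=\jet_t q\in\TT_D$, so $q(t)$ is tangent to $D$, and consequently $\T\tau_Q\left(\jet_{t=\wt t}\,\xi(t)\right)=\jet_{t=\wt t}\,q(t)=\gamma(\wt t)\in D$. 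Hence the hypothesis $\WW^{nh}_D\subseteq\WW^{vak}_D$ only yields $\kappa_Q(V)\in\T D$ for $V$ in the proper subset $\left\{V\in\T D:\ \T\tau_Q(V)\in D\right\}$, which is strictly smaller than $\T D$ whenever $D\neq\T Q$ (take a curve in $D$ whose base projection is transverse to $D$). Your claimed conclusion $\kappa_Q(\T D)\subseteq\T D$ is therefore not established, and Corollary \ref{cor:int_distr} cannot be invoked as stated.

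The gap is repairable, and the repair is essentially the paper's own route, which argues contrapositively with a single well-chosen vector instead of all of $\T D$. If $D$ is not integrable, pick $D$-valued fields $X,Y$ and a point $p$ with $[X,Y](p)\notin D_p$; by Proposition \ref{prop:kappa_bracket} the vector $B=\T Y(X(p))\in\T D$ satisfies $\kappa_Q(B)\notin\T D$, since otherwise $A-\kappa_Q(B)=\V_{X(p)}[X,Y](p)$ would be a vertical vector in $\T D$, forcing $[X,Y](p)\in D_p$. This particular bad vector \emph{does} lie in the realizable subset, because $\T\tau_Q(B)=X(p)\in D$: realize it as $\jet_{t=\wt t}\,\xi(t)$ with $\xi(t)=Y(q(t))$ along the integral curve $q(t)$ of $X$ through $p$, which is an admissible base path. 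The resulting nonholonomic variation $\del_\xi\gamma$ satisfies $\del_\xi\gamma(\wt t)=\kappa_Q(B)\notin\T D$, so it is not vakonomic. Equivalently, you could observe that both vectors $A$ and $B$ appearing in Proposition \ref{prop:kappa_bracket} satisfy the constraint $\T\tau_Q(\cdot)\in D$, so the \emph{restricted} inclusion you actually proved suffices to rerun the proof of Corollary \ref{cor:int_distr}; but either way this extra step must be made explicit before the corollary can be applied.
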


Constraints discussed in the above proposition are known as \emph{holonomic constraints}. Notice that integrability of $D$ is a necessary and sufficient condition for the principles $\PP^{vak}_D$ and $\PP^{nh}_D$ to coincide and thus it implies that the sets of extremals $\Gamma_D^{vak}$ and $\Gamma_D^{nh}$ coincide as well, but it is not necessary for the latter. For example, if $L$ is constant, then $\Gamma_D^{vak}=\Gamma_D^{nh}=\TT_D$ (the set of all admissible paths), independently of $D$.

{\change To prove Proposition~\ref{prop:holonomic} we will use the following fact that relates the canonical flip $\kappa_Q$ with integrability of distributions. 

}

\begin{prop}\label{prop:int_distr}
Let $D\subset \T Q$ be a smooth distribution on $Q$. Let $\T D$ denote the tangent bundle of the distribution $D$ considered as a submanifold of $\T Q$. Then $D$ is integrable if and only if $\kappa_Q$ maps $\T D$ into $\T D$. 
\end{prop}

\begin{proof} Consider any two $D$-valued vector fields $X,Y:Q\ra D$, and chose a point $p\in Q$. The vectors $A=\T X(Y(p))$ and $B=\T Y(X(p))$ clearly belong to $\T D$. Note that vectors $A$ and $\kappa_Q(B)$ project to the same vector $X(p)\in\T_pQ$ via $\tau_{T Q}$ and to the same vector $Y(p)\in\T_p Q$ via $\T\tau_Q$, hence their difference $A-\kappa_Q(B)$ is vertical. In fact,  
$$
A-\kappa_Q(B)=\V_{X(p)}[X,Y](p):=\jet_{s=0}\big[X(p)+s\cdot[X,Y](p)\big]\ .
$$
{\change The above equality can be easily checked by a direct calculation using \eqref{eqn:kappa_coord}.}  A proof can be also found in paragraph 6.13 in \cite{Kolar_Michor_Slovak_1993}.\newnew{Zdecydowałem się nie wyróżniać powyższej równości jako osobnego wyniku, ale raczej ukryć ją wewnątrz innego wyniku, który jest znacznie prostszy do zrozumienia.}

It follows that $\kappa_Q(B)$ belongs to $\T D$ if and only if $\V_{X(p)}[X,Y](p)$ belongs to $\T_{X(p)} D$. The latter is equivalent to $[X,Y](p)\in D_p$ (since the vertical part of $\T D$ {\change can be canonically identified with} $D$).
\end{proof}

Now we are ready to prove Proposition~\ref{prop:holonomic}.

\begin{proof}[Proof of Proposition~\ref{prop:holonomic}.]
Assume that $D$ is integrable. It is enough to check that $\WW^{vak}_D=\WW^{nh}_D$. For a given admissible path $\gamma(t)$ take a generator of a nonholonomic variation $\xi(t)\in D$. Now since $\jet_t\xi(t)\in \T D$, {\change from Proposition~\ref{prop:int_distr} it follows that} $\del_\xi\gamma(t)=\kappa_Q(\jet_t\xi(t))\in\T D$, and thus $\del_\xi\gamma$ is a vakonomic variation. Conversely, given a vakonomic variation $\del_\xi\gamma(t)=\kappa_Q(\jet_t\xi(t))\in\T D$, we have $\jet_t\xi(t)=\kappa_Q(\del_\xi\gamma(t))\in \T D$ due to the fact that  $\kappa_Q$ is an involution. Hence $\xi(t)\in D$ is a generator of a nonholonomic variation. 

If $D$ is not integrable then $\kappa_Q(V)\notin \T D$ for some $V\in \T_{\gamma_0} D$. Now choose a point $t'\in(t_0,t_1)$ and consider an admissible path $\gamma\in \TT_D$ and a generator $\xi(t)\in D$ of a nonholonomic variation $\del_\xi\gamma$ such that $\gamma(t')=\gamma_0$ and $\jet_{t=t'}\xi(t)=V$. Clearly $\del_\xi\gamma(t')=\kappa_Q(\jet_{t=t'}\xi(t))=\kappa_Q(V)\notin \T D$, and hence the nonholonomic variation $\del_\xi\gamma$ cannot belong to $\WW^{vak}_D$.  
\end{proof}

\newpage
\section{Non-invariant Chaplygin systems} \label{sec:chaplygin}

In this section we shall apply Proposition~\ref{prop:two_var_prob} to solve the comparison problems \eqref{main_question_1} and \eqref{main_question_2} for a particular class of systems with linear constraints, namely for (non-invariant) Chaplygin systems. In particular, we will be able to recover (and generalize) some results from \cite{Crampin_Mestdag_2010,Favretti_1998,Fernandez_Bloch_2008}. To demonstrate the usefulness of our approach we shall omit the usual assumptions of the $G$-invariance of both: the constraints distribution and the Lagrangian.

\subsection{\change The geometry of Chaplygin systems}
\newnew{W tym długim rozdziale wyróżniłem dwa podrozdziały - przygotowawczy i wynikowy}
\paragraph{Chaplygin systems.}
Consider a right \emph{principal $G$-bundle} $\pi:Q\ra M=Q/G$. By a \emph{vertical distribution} on $Q$ we shall understand the distribution $\V Q:=\ker\pi_\ast\subset\T Q$ consisting of all vectors tangent to the fibres of $\pi$. By $R_g(q)$ or simply $q\cdot g$ we shall denote the action of an element $g\in G$ on a point $p\in Q$. Note that the induced action $(R_g)_\ast$ preserves $\V Q$, i.e., $(R_g)_\ast \V_q Q=\V_{q\cdot g}Q$. 

\begin{definition}\label{def:hor_dist}
A \emph{horizontal distribution} on $Q$ is any smooth distribution $\HH Q\subset \T Q$ such that at each $q\in Q$ we have $\T_qQ=\HH_qQ\oplus\V_qQ$. (Note that we do not assume that $\HH Q$ is $G$-invariant, i.e., that it is a horizontal distribution of a principal $G$-connection.) A curve $q(t)\in Q$ is called \emph{horizontal} if its tangent lift $\jet_t q(t)$ belongs to $\HH Q$. 

{\change Clearly, $\HH Q$ is a horizontal bundle of an \emph{Ehresmann connection} on $Q$. }
\end{definition}

\begin{definition}
\label{def:ch_syst} By a (\emph{non-invariant}) \emph{Chaplygin system} we shall understand a principal $G$-bundle $\pi:Q\ra M$ equipped with a horizontal distribution $\HH Q\subset \T Q$ and a smooth Lagrangian function $L:\T Q\ra\R$.
\end{definition}

\newnew{Rysunek poglądowy.}

\begin{figure}[h]
\begin{center}
\includegraphics[width=0.45\textwidth]{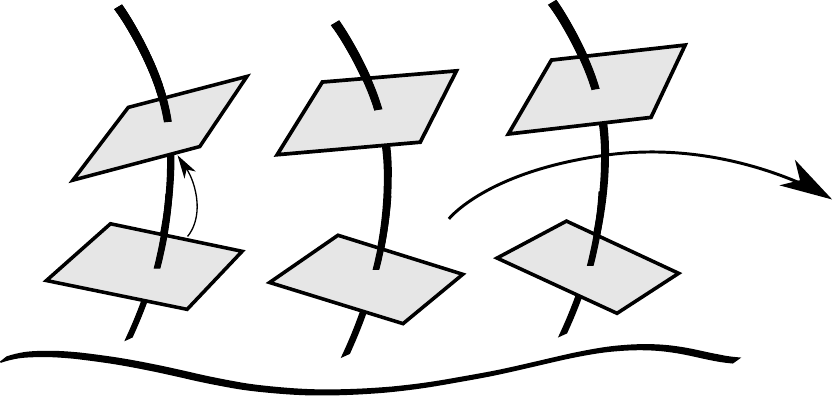}
\put(-100,15){$\HH Q$}
\put(2,47){$\R$}
\put(-50,51){$L$}
\put(-195,-2){$M$}
\put(-195,90){$Q$}
\put(-160,50){$G$}

\end{center}
\caption{A (non-invariant) Chaplygin system is a principal $G$-bundle $\pi:Q\ra M$, equipped with a horizontal distribution $\HH Q$  and a Lagrangian $L:\T Q\ra\R$, which not necessarily need to be $G$-invariant.}
\label{fig:chaplygin}
\end{figure}

Usually in the literature the $G$-invariance of the Lagrangian $L$ and of the horizontal distribution $\HH Q$ is assumed. Such systems are called \emph{Chaplygin systems} \cite{Crampin_Mestdag_2010,Fernandez_Bloch_2008}, which term was coined by Koiller \cite{Koiller_1992}. Sometimes Chaplygin systems are described as \emph{Abelian} or \emph{non-Abelian}, depending on the commutativity of the structural Lie group $G$. Cantrijn et al. \cite{Cantrijn_Cortez_Inn_2002} use the adjective \emph{generalized} Chaplygin system in the same sense as other authors \cite{Crampin_Mestdag_2010,Fernandez_Bloch_2008, Koiller_1992} use the word \emph{non-Abelian} (to emphasize that the Lie group $G$ is \emph{general}). Our Definition~\ref{def:ch_syst} describes  a more general situation with no invariance conditions assumed.  To distinguish it from the standard setting we added the adjective \emph{non-invariant}. Clearly, the standard Chaplygin system are a special case of the non-invariant Chaplygin system with additional symmetry assumptions. Thus all our considerations about non-invariant systems will hold also in the standard case.

At this point it is worthy to remark about the side convention. We speak about systems with the right action of the structural group following the classical textbook \cite{Kobayashi_Nomizu_1963}. However, all our results remain valid also for systems with the left group action, provided that we carefully  substitute the right action with the left action, change $\ad_{g^{-1}}$ to $\ad_{g}$, etc.

With a given (non-invariant) Chaplygin system one can naturally associate nonholonomically and vakonomically constrained dynamics, taking $\HH Q$ to be the constraints distribution $C$. Note that the \emph{admissible paths} $\TT_{\HH Q}$ in the corresponding variational principles are precisely the tangent lifts of the horizontal curves in $Q$ (we shall therefore refer to the elements of $\TT_{\HH Q}$ as to \emph{horizontal admissible paths}).

\paragraph{\change A brief overview.} \newnew{Dodałem ten fragment, żeby wyjaśnić co się będzie działo.}
{\change Throughout the remaining part of this section we shall be working with a given (non-invariant) Chaplygin system constituted by a Lagrangian $L:\T Q\ra \R$ and a horizontal distribution $\HH Q\subset \T Q$ on a right principal $G$-bundle $\pi:Q\ra M=Q/G$. Our ultimate goal is to solve the comparison problems \eqref{main_question_1} and \eqref{main_question_2} for such a system (taking $\HH Q$ to be the constraints distribution). This will be formulated as Theorem~\ref{thm:chap_syst}.

From our considerations in the previous Section~\ref{sec:var_calc}, it should be clear that to address the comparison problems it is essential to understand the geometry of the admissible variations of the system in question. This will be the content of Lemma~\ref{lem:var_structure}, where an admissible variation $\del_{\xi(t)}\wt X(t)$ along a horizontal admissible path\footnote{Below we will denote base vector fields by $X\in\X(M)$ and their horizontal lifts by $\wt X\in\X(Q)$.} $\wt X(t)$ is described in terms of the splitting $\T Q=\HH Q\oplus\V  Q$. More precisely, this splitting induces the splitting of the second tangent bundle $\T\T Q=\T\HH Q\oplus\T\V  Q$, and the main result of Lemma~\ref{lem:var_structure} is a description of the $\T\V Q$-component of $\del_{\xi(t)}\wt X(t)$. This, in turn, allows to determine these generators $\xi(t)$ for which this component vanishes, and consequently the admissible variation $\del_{\xi(t)}\wt X(t)$ is vakonomic (i.e. tangent to $\HH Q$).

Observe that the presence of the splitting $\T Q=\HH Q\oplus\V Q$ allows to decompose the generator $\xi(t)$ into its horizontal and vertical parts, and due to the linearity of $\del_{\xi(t)}\wt X(t)$ with respect to $\xi(t)$ (Proposition~\ref{prop:prop_variation} point \eqref{point:prob_var_1}), we can restrict our attention to two distinct cases: $\xi(t)$ being horizontal, and $\xi(t)$ being vertical. Further, due to the Lie group action on $Q$, vertical objects have a canonical description it terms of the Lie algebra of the structural group, and thus Lemma~\ref{lem:var_structure} is formulated in the language of Lie algebra valued objects. \medskip

The execution of the program sketched above requires, however, some technical preparation. This will be the content of the few following subsections, where we shall introduce technical tools needed to formulate and prove Lemma~\ref{lem:var_structure}. Most of them are standard notions from the theory of connections and $G$-bundles such as: fundamental vector fields, connection forms, curvature, vertical derivatives, etc.    

}


\paragraph{Fundamental vector fields.}
 Denote by $\g$ the tangent space of the Lie group $G$ at the identity $e$ equipped with the left Lie algebra structure $[\cdot,\cdot]_\g:\g\times\g\ra\g$. Note that for any $q\in Q$, since the pointed fibre $(Q_{\pi(q)},q)$ can be canonically identified with $(G,e)$ via the $G$-action, we can identify $\V_qQ$ with $\g=\T_eG$, and therefore there exists a vector bundle isomorphism $\phi_Q:\V Q\ra Q\times\g$. Now for each $a\in \g$ we can construct a \emph{fundamental vector field} $\wt a$ defined by $\wt a=(\phi_Q)^{-1}(Q\times\{a\})$. It is well-known \cite{Kobayashi_Nomizu_1963} that the flow of $\wt a$  is $t\mapsto R_{\exp(t\cdot a)}$, that $(R_g)_\ast \wt a=\wt{\ad_{g^{-1}}a}$, and that the association $a\mapsto \wt a$ is a Lie algebra homomorphism, i.e., $[\wt a,\wt b]=\wt{[a,b]_\g}$ for each $a,b\in\g$. 
  

\paragraph{The canonical splitting and connection 1-forms.} 

The (non-invariant) Chaplygin system on $\pi:Q\ra M$ provides us with a canonical splitting $\T Q=\HH Q\oplus_Q \V Q$. Combining this with the canonical isomorphism $\phi_Q:\V Q\approx Q\times\g$ one gets 
$$\T Q=\HH Q\oplus_Q\left(Q\times \g\right).$$
Using the above identification we can project every vector in $\T Q$ to its $\g$-part. We shall denote this projection by 
$$\T Q\ni Z\longmapsto \cform(Z)\in\g.$$
Usually $\cform$ is called the \emph{1-form of the Ehresmann connection} associated with $\HH Q$.

Note also that the canonical splitting $\T Q=\HH Q\oplus_Q \V Q$ induces the splitting $\T\T Q=\T\HH Q\oplus_{\T Q} \T\V Q$. Again we can combine the latter with the tangent map of the canonical isomorphism $\T\phi_Q:\T\V Q\approx \T Q\times\T\g\approx \T Q\times\g\times\g$ and get
$$\T\T Q=\T\HH Q\oplus_{\T Q}\left(\T Q\times\g\times\g\right).$$
It follows that every vector in $\T\T Q$ can be projected to its $\T\g=\g\times\g$-part:
$$\T_Z\T Q\ni\mathcal{A}\longmapsto \left(\cform(Z),\Cform(\mathcal{A})\right)\in \g\times\g\ ,$$
where by $\Cform(\cdot)$ we denoted the projection to the second copy of $\g$ in $\T\g=\g\times\g$. 
Clearly, this map is simply the 1-form of the lifted {\change Ehresmann connection associated with $\T\HH Q$. }

\paragraph{Horizontal lifts.}
At each point $q\in Q$ the tangent map $\pi_\ast$ is an isomorphism between $\HH_qQ$ and $\T_{\pi(q)}M$. Therefore, given a vector $X\in\T_x M$ and a point $q\in Q$ such that $\pi(q)=x$, we can lift $X$ to a unique horizontal vector $\wt X_q\in \HH_qQ$ such that $\pi_\ast \wt X_q=X$. In other words, we have a canonical vector bundle isomorphism $h:Q\times_M\T M\ra \HH Q$ such that $\wt X_q=h(q,X)$. Applying the lifting procedure point-wise to a base vector field $X\in\X(M)$ we obtain its \emph{horizontal lift} $\wt X\in\X(Q)$. 
\medskip

The construction of the horizontal lift allows us to introduce several interesting geometric structures associated with the structure of a (non-invariant) Chaplygin system on $\pi:Q\ra M$ such as the curvature of $\HH Q$, the map $\B$ (which measures the rate of $G$-invariance of $\HH Q$) and two particular derivatives of the Lagrangian (we will call them the \emph{horizontal} and the \emph{vertical} derivative). We shall describe these in the remaining part of this section. 
\medskip

\paragraph{{\change The curvature of $\HH Q$.}} 
It is well known that for any two base vector fields $X,Y\in \X(M)$, the vector $[\wt X,\wt Y]-\wt{[X,Y]}$ at $q\in Q$ belongs to $\V_qQ$ (i.e., is vertical). Moreover, the association $(X,Y)\mapsto [\wt X,\wt Y]-\wt{[X,Y]}$ is $C^\infty(M)$-linear with respect to both $X$ and $Y$ (i.e., has tensorial character). Therefore it defines a bilinear and skew-symmetric map
$$\wt\RR: Q\times_M\wedge^2\T M\lra \V Q.$$
Combining $\wt \RR$ with the $\g$-projection $\cform:\V Q\subset\T Q\ra\g$ we obtain a bilinear skew-symmetric $\g$-valued map
\begin{equation}\label{eqn:curv}
\RR:=\cform\circ\wt\RR:Q\times_M\wedge^2\T M\lra\g\ ,
\end{equation}
called the \emph{curvature} of the horizontal distribution $\HH Q$. Clearly, the curvature measures the rate of non-integrability of $\HH Q$ at a given point $q\in Q$.
\medskip

\paragraph{\change The measure of the $G$-invariance of $\HH Q$.} 
Similarly as above, observe that for any base vector field $X\in\X(M)$ and for any $a\in\g$, the Lie bracket $[\wt X,\wt a]$ at $q\in Q$ is vertical. Moreover, the association $(X,a)\mapsto[\wt X,\wt a]$ is $\C^\infty (M)$-linear (tensorial) with respect to $X$ and $\R$-linear with respect to $a$. Therefore it defines a bilinear map
$$\wt\B:Q\times_M\T M\times \g\ra VQ.$$
Combining $\wt \B$ with  the $\g$-projection $\cform:\V Q\subset\T Q\ra\g$ gives us a $\g$-valued bilinear map 
\begin{equation}\label{eqn:B}
\B:=\cform\circ\wt\B:Q\times_M\T M\times\g\ra \g.
\end{equation} 
The map $\B$ measures the non-invariance of the horizontal distribution with respect to the $G$-action as the following remark explains.

\begin{remark}[{\change The case of a $G$-invariant horizontal distribution}]\label{rem:curv_B_inv} For a given $a\in\g$ consider the curve $t\mapsto\exp(t\cdot a)\in G$ (i.e.,  the flow of the fundamental vector field $\wt a$). \newnew{Zrezygnowałem z oznaczania $q\cdot \exp(t\cdot a)$ przez $q_a(t)$. To raczej zaciemnia obraz zamiast go upraszczać.} Now for a given $X\in\T_{\pi(q)}M$ consider a curve 
$$\left(R_{\exp(t\cdot a)}\right)_\ast \wt X_q-\wt X_{q\cdot \exp(t\cdot a)}\in\T_{q\cdot \exp(t\cdot a)}Q.$$
Clearly the 1-jet of this curve at $t=0$ is a vector in $\T_{0_q}\T Q$. Due to the canonical identification $\T_0\T Q\approx\T Q\times_Q\T Q$, we may represent this vector as a pair of vectors in $\T_q Q$ (in fact, it turns out that both vectors are elements of $\V_q Q\subset\T_qQ$). The first of these vectors is represented by the curve $q\cdot \exp(t\cdot a)$, thus it is the fundamental field $\wt a_q$. By the definition of the Lie derivative, the second is $$\mathcal{L}_{\wt a} \wt X_q=-[\wt a,\wt X]_q=\wt B(q)(X,a).$$
We conclude that if $\HH Q$ is $G$-invariant, then $(R_g)_\ast \wt X_q=\wt X_{q\cdot g}$, and thus $\wt\B(q)(X,a)=0$. 
\medskip

Further, if $\HH Q$ is $G$-invariant, then
$$\RR(q\cdot g)(X,Y)=\ad_{g^{-1}}\RR(q)(X,Y).$$ 
Indeed, from the $G$-invariance of $\HH Q$, we conclude that 
$$\wt \RR(q\cdot g)=(R_g)_\ast\wt \RR(q)(X,Y)=\wt{\ad_{g^{-1}}\RR}(q)(X,Y).$$
\end{remark}

\paragraph{{\change The vertical and the horizontal derivative of the Lagrangian.}} 

{\change Let $L:\T Q\ra \R$ be a Lagrangian. Its} \emph{vertical derivative} $\FL:\HH Q\ra\g^\ast$ is defied 
by the formula\footnote{\change Our notation convention for the vertical derivative $\FL$ follows the literature (e.g. \cite{Favretti_1998,Fernandez_Bloch_2008}). For the notion of the horizontal derivative that will be introduced below (and is not present in the literature) we propose the symbol $\BL$.}
\begin{equation}\label{eqn:FL}
\<\FL(\wt X_q),b>:=\<\dd L, \V_{\wt X_q}\wt b_q>=\frac{\dd}{\dd t}\bigg|_{0}L(\wt X_q+t\cdot \wt b_q)\ ,
\end{equation}
{\change where $b\in \g$ is any element of the Lie algebra.} In other words, $\langle\FL(\wt X_q),b\rangle$ is just the usual fiber-wise derivative of $L$ in the direction of {\change the fundamental vector field} $\wt b$. In the case of the standard ($G$-invariant) Chaplygin system (with a hyper-regular Lagrangian, i.e. the related Legendre map is a global diffeomorphism between $\T Q$ and $\T^\ast Q$), the map $\FL(\wt X)$ coincides with the notion of the  momentum map  (restricted to $\HH Q\subset \T Q$) along a trajectory of the system (cf. Sec. 3 of \cite{Favretti_1998}).\medskip

\newnew{Zmieniłem oznaczenie $\mathrm{B}L\mapsto \BL$.} {\change Now we will introduce a similar notion of a horizontal derivative.} 
Recall the lifting isomorphism $h:Q\times_M\T M\ra\HH Q$.  For a given $X\in\T_x M$ consider the map $h(\cdot,X):Q_x\ra \HH Q$. Now for a given $b\in\g$, by $h_b\wt X_q\in \T_{\wt X_q}\HH Q$ we shall denote the tangent map of $h(\cdot, X)$ evaluated on the fundamental vector field $\wt b_q$. In other words, 
$h_b\wt X_q$ is the 1-jet at $t=0$ of a curve {\change $\wt X_{q\cdot \exp(t\cdot b)}$.} We define the \emph{horizontal derivative} of the Lagrangian 
$\BL:\HH Q\ra\g^\ast$ 
by the formula
\begin{equation}\label{eqn:BL}
\<\BL(\wt X_q),b>:=\<\dd L,h_b\wt X_q>=\frac{\dd}{\dd t}\bigg|_{0}L(\wt X_{q\cdot \exp(t\cdot b)}).
\end{equation}
 {\change In other words, $\BL$ measures how the Lagrangian evaluated on a horizontal vector behaves under the action of the structural group $G$. Contrary to the notion of the horizontal derivative, $\BL$ is not present in the literature as it vanishes under the assumptions of the $G$-invariance of both $L$ and $\HH Q$ (cf. Remark~\ref{rem:L_inv} below and Remark~\ref{rem:curv_B_inv}). Together} the derivatives $\BL$ and $\FL$ allow one to express easily the condition of the symmetry of the Lagrangian.

\begin{remark}[{\change The case of a $G$-invatiant Lagrangian}]\label{rem:L_inv}
Assume that the Lagrangian is invariant with respect to the action of the structural group $G$. Then at every $\wt X\in\HH_q Q$ 
$$\<\BL(\wt X),b>+\<\FL(\wt X),\B(q)(\wt X,b)>=0$$
for every $b\in\g$.

Indeed, since $L$ is $G$-invariant, then for any $g(t)\in G$
$$0=\frac{\dd }{\dd t}\bigg|_{t=0}L\left((R_{g(t)})_\ast \wt X_q\right)=\<\dd L(\wt X),\jet_{t=0}(R_{g(t)})_\ast \wt X_g>.$$
Take now $g(t)=\exp(t\cdot b)$, for $b\in\g$. We can decompose the 1-jet of $(R_{g(t)})_\ast \wt X_q$ into the sum (with respect to $\T\tau:\T\T Q\ra\T Q$) of the 1-jets of
$$(R_{g(t)})_\ast \wt X_q-\wt X_{q\cdot g(t)}\quad \text{and}\quad\wt X_{q\cdot g(t)}$$
By Remark~\ref{rem:curv_B_inv}, the first of these curves corresponds to the vector $(\wt b_q,\wt\B(q)(X,b))\in\T_qQ\oplus\T_qQ\approx\T_{0_q}\T Q$. The second is simply $h_b\wt X_b$. Now the sum of these two vectors is equal to the sum 
$$\V_{\wt X_q}\wt \B(q)(X,b)+h_b\wt X_q$$ 
taken with respect to the vector bundle structure in $\tau_{\T Q}:\T\T Q\ra\T Q$. It follows that 
\begin{align*}
0=\<\dd L(\wt X),\jet_{t=0}(R_{g(t)})_\ast \wt X_g>=&\<\dd L(\wt X),\V_{\wt X_q}\wt \B(q)(X,b)+h_b\wt X_q>\overset{\eqref{eqn:BL}-\eqref{eqn:FL}}=\\
&\<\FL(\wt X),\B(q)(X,b)>+\<\BL(\wt X),b>.
\end{align*}
Note that above we had to apply the addition in $\tau_{\T Q}$ (not $\T\tau_Q$), since $\dd L$ in not linear with respect to the latter vector bundle structure. Note also that we actually used only the invariance of $L$ on horizontal vectors. 
\end{remark}

\paragraph{Local description.}
In order to describe the structure of admissible variations in Chaplygin systems, we need to introduce local coordinates adopted to the structure of the (non-invariant) Chaplygin system on $\pi:Q\ra M=Q/G$. 

Consider any local trivialization $Q\approx M\times G$ of the $G$-bundle $\pi$ and local coordinates $(q^a)=(x^i,g^\alpha)$ adopted to this trivialization (i.e., $(x^i)$ with $i=1,\hdots,m$ are coordinates on $M$ and $(g^\alpha)$ with $\alpha=1,\hdots k$ coordinates on $G$). Choose a local frame $\{e_i\}$ on $\T M$ and a basis $\{e_\alpha\}$ of $\T_eG$. The set of vector fields $\{\wt e_i,\wt e_\alpha\}$ with $i=1,\hdots,m$ and $\alpha=1,\hdots,k$, consisting of the horizontal lifts of fields $e_i$ and the fundamental vector fields associated with elements $e_\alpha\in\g$, is a local frame on $Q$. We introduce a coordinate system $(q^a,v^b)=(x^i,g^\alpha, y^j,a^\beta)$  on $\T Q$ associated with this particular frame (recall our considerations from the first subsection of Section~\ref{sec:prel}). By its very definition these coordinates  are naturally adopted to the splitting $\T Q=\HH Q\oplus_Q\V Q$, i.e., for a vector $Z\in\T Q$ represented by $(x^i,g^\alpha, y^j,a^\beta)$ its $\HH Q$-projection is simply $(x^i,g^\alpha, y^j,0)$ and its $\V Q$-projection is $(x^i,g^\alpha,0,a^\beta)$. Moreover the $\g$-projection of $Z$ is $\cform(Z)=a^\alpha e_\alpha\in\g$, i.e., the considered coordinate system is also naturally compatible with the identification $\phi_Q:\V Q\approx Q\times\g$. 

Consequently, also the induced coordinate system $(q^a,v^b, q'^c,v'^d)=(x^i,g^\alpha, y^j,a^\beta,x'^l, g'^\gamma, y'^l, a'^\delta)$ on $\T\T Q$ is naturally compatible with the induced splitting $\T\T Q=\T\HH Q\oplus_{\T Q} \T\V Q$ and the canonical identification $\T\V Q\approx\T Q\times\g\times\g$. Hence for $\mathcal{A}\in\T\T Q$ represented by $(x^i,g^\alpha, y^j,a^\beta, x'^l, g'^\gamma, y'^l a'^\delta)$ its $\T \HH Q$-projection reads as $(x^i,g^\alpha, y^j,0, x'^l, g'^\gamma, y'^l,0)$, its $\T\V Q$-projection is $(x^i,g^\alpha,0,a^\beta, x'^l, g'^\gamma,0, a'^\delta)$ and, moreover, $\Cform(\mathcal{A})= a'^\alpha e_\alpha\in\g$.  

In the considered situation, rule \eqref{eqn:trans_coord} which relates the induced coordinates $\dot q^a=(\dot x^i,\dot g^\alpha)$ with $v^a=(y^i,a^\alpha)$ takes a special form 
\begin{equation}\label{eqn:coord_chapl}
\begin{split}
\dot x^i=&\A ij(q) y^j,\\
\dot g^i=&\A \alpha j(q) y^j+\A \alpha\beta(g) a^\beta,
\end{split}
\end{equation}
with vanishing entries $\A i\alpha(q)$ of the transition matrix, and entries $\A \alpha\beta(q)$ depending on $G$ only. 

Let $\RR^\alpha_{ij}(q)$ and $\B^\alpha_{i\beta}(q)$ be the coefficients of the maps $\RR$ and $\B$ in the chosen $\g$-basis, i.e.,
\begin{align*}
\RR(q)(X,Y)=& e_\alpha \RR^\alpha_{ij}(q) X^iY^j\\
\B(q)(X,a)=& e_\alpha\B^\alpha_{i\beta}(q)X^ia^\beta,
\end{align*}
where $X=X^i e_i$, $Y=Y^je_j$ and $a=a^\beta e_\beta$. Clearly, the above coefficients are also coefficients of $\wt R$ and $\wt B$ in the basis $\wt e_\alpha$, i.e., $\wt\RR(q)(X,Y)=\wt e_\alpha \RR^\alpha_{ij}(q) X^iY^j$ and
$\wt\B(q)(X,a)=\wt e_\alpha\B^\alpha_{i\beta}(q)X^ia^\beta$.   

From our previous considerations and from the definition of $\wt\RR$ and $\wt\B$ it follows that
\begin{prop}\label{prop:coeff_bracket}
The structure functions of the Lie bracket on $\X(Q)$ with respect to the frame $\{\wt e_i,\wt e_\alpha\}$ are given by
\begin{align*}
&[\wt e_i,\wt e_j]=\wt e_k \br kij(x)+\wt e_\alpha \RR^\alpha_{ij}(q),\\
&[\wt e_i,\wt e_\beta]=\wt e_\alpha B^\alpha_{i\beta}(q),\\
&[\wt e_\beta,\wt e_\gamma]=\wt e_\alpha\bra \alpha\beta\gamma,
\end{align*}
where $\br kij(x)$ are the structure functions of the Lie bracket on $\X(M)$ with respect to the frame $\{e_i\}$ (i.e., $[e_i,e_j]=e_k\br kij(x)$), and $\bra \alpha\beta\gamma$ are the structure constants of the Lie algebra $\g$ in the basis $\{e_\alpha\}$ (i.e., $[e_\beta,e_\gamma]_\g=e_\alpha \bra \alpha\beta\gamma$).   
\end{prop}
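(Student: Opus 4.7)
The proof proposal is to verify each of the three bracket formulas separately, in each case peeling back to the definitions of $\wt\RR$, $\wt\B$, and the fundamental-field map $a \mapsto \wt a$ that were introduced earlier in the section. All three identities are essentially tautologies once one sets up the bookkeeping; no genuine obstacle is expected.

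For the first formula $[\wt e_i, \wt e_j] = \wt e_k \br kij(x) + \wt e_\alpha \RR^\alpha_{ij}(q)$, the plan is to start from the defining equation $[\wt e_i, \wt e_j] - \wt{[e_i, e_j]} = \wt\RR(q)(e_i, e_j)$, which was used to define $\wt\RR$ as a tensorial object (this is exactly the content of the identity $(X,Y) \mapsto [\wt X, \wt Y] - \wt{[X,Y]}$ being $C^\infty(M)$-bilinear, as discussed just before \eqref{eqn:curv}). On the left-hand side, since $[e_i, e_j] = e_k \br kij(x)$ and the coefficients $\br kij(x)$ are pulled back from $M$, the horizontal lift commutes with scalar multiplication and yields $\wt{[e_i, e_j]} = \wt e_k \br kij(x)$. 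On the right-hand side, by definition $\wt\RR(q)(e_i, e_j) = \wt e_\alpha \RR^\alpha_{ij}(q)$. Combining these gives the claim.

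For the second formula $[\wt e_i, \wt e_\beta] = \wt e_\alpha B^\alpha_{i\beta}(q)$, the derivation is even more direct: by the very definition preceding \eqref{eqn:B}, the map $\wt\B$ is defined so that $\wt\B(q)(X, a) = [\wt X, \wt a]_q$ (this is how the text introduces $\wt\B$ as a bilinear map after noting that $[\wt X, \wt a]$ is vertical). Applying this with $X = e_i$ and $a = e_\beta$ and expanding $\wt\B(q)(e_i, e_\beta)$ in the basis $\{\wt e_\alpha\}$ via the coefficients $\B^\alpha_{i\beta}(q)$ produces the identity.

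For the third formula $[\wt e_\beta, \wt e_\gamma] = \wt e_\alpha \bra \alpha\beta\gamma$, the plan is simply to quote the fact, already stated in the paragraph on fundamental vector fields, that $a \mapsto \wt a$ is a Lie algebra homomorphism, so that $[\wt e_\beta, \wt e_\gamma] = \wt{[e_\beta, e_\gamma]_\g}$. Substituting $[e_\beta, e_\gamma]_\g = e_\alpha \bra \alpha\beta\gamma$ and using $\R$-linearity of $a \mapsto \wt a$ finishes the argument. The only point requiring a small comment is a uniform sign/side convention (right action versus left action, and whether $\wt\B$ is $[\wt X, \wt a]$ or $[\wt a, \wt X]$); this is the one place where a careless reader could be tripped up, so the proof should make the convention explicit in accordance with the remarks preceding Remark \ref{rem:curv_B_inv}.
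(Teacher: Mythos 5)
Your proposal is correct and is essentially the paper's own argument: the paper offers no separate proof, stating only that the proposition ``follows from our previous considerations and from the definition of $\wt\RR$ and $\wt\B$'', and your unfolding of those definitions (tensoriality of $(X,Y)\mapsto[\wt X,\wt Y]-\wt{[X,Y]}$, the definition $\wt\B(q)(X,a)=[\wt X,\wt a]_q$, and the Lie algebra homomorphism property $[\wt a,\wt b]=\wt{[a,b]_\g}$ of fundamental vector fields) is exactly the intended reasoning. Your explicit attention to the side/sign convention is a sensible addition consistent with the paper's remarks.
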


Using the local coordinates $(x^i,g^\alpha, y^j,a^\beta)$ we can easily describe the derivatives $\BL$ and $\FL$ introduced before. Namely, for a horizontal vector $\wt X_q\sim(x^i,g^\alpha, y^j,0)$ and $b=b^\alpha e_\alpha\in\g$, the curve $\wt X_q$ corresponds to $(x^i,g^\alpha(t), y^j,0)$, where $g^\alpha(t)$ is a local form of the flow of $b$. Thus the vector $h_b\wt X_q$ is represented by $(\dot x^i=0,\dot g^\alpha=\A \alpha\beta(g)b^\beta, \dot y^j=0,\dot a^\beta=0)$. Similarly, a curve $\wt X_q+t\cdot \wt b_q$ corresponds $(x^i,g^\alpha, y^j,t\cdot b^\beta)$ and thus the vector $\V_{\wt X_q}\wt b_q$ is represented by $(\dot x^i=0,\dot g^\alpha=0, \dot y^j=0,\dot a^\beta=b^\beta)$. We conclude that
\begin{align*}
\<\BL(\wt X),b>&=\frac{\pa L}{\pa g^\alpha}(x,g,y,0)\A \alpha\beta(g)b^\beta,\\
\<\FL(\wt X),b>&=\frac{\pa L}{\pa a^\alpha}(X,g,y,0)b^\alpha.
\end{align*}  


\paragraph{The geometry of admissible variations.} 
In this part we shall study the standard admissible variations (i.e., elements of $\WW^{st}_{\T Q}$) along a given horizontal admissible path in $\TT_{\HH Q}$. Our crucial tool in this study will be the splitting $\T Q=\HH Q\oplus_Q \V Q$ introduced above. 

Consider a horizontal admissible path $\wt X(t)=\jet_tq(t)\in\HH_{q(t)}Q$, being the tangent lift of a horizontal curve $q(t)$. Denote by $x(t)=\pi(q(t))$ the base projection of $q(t)$, and by $X(t)=\pi_\ast \wt X(t)\in \T_{x(t)} M$ the base projection of $\wt X(t)$. Take a generator $\xi(t)\in\T_{q(t)}Q$ of the standard admissible variation $\del_{\xi(t)} \wt X(t)$. 
According to Proposition~\ref{prop:prop_variation} part \eqref{point:prob_var_2} this variation is an element of $\T_{\wt X(t)}\T Q$. Taking into account the induced splitting $\T\T Q=\T\HH Q\oplus_{\T Q}\T\V Q$ and the fact that $\wt X(t)$ is horizontal, we have $\del_{\xi(t)} \wt X(t)\in \T_{\wt X(t)}\HH Q\oplus\T_{\theta_{q(t)}}\V Q$, where $\theta_{q(t)}$ stands for the null vector in $\V_{q(t)} Q\subset\T_{q(t)} Q$. Our goal now will be to describe the $\T\V Q$-part of this variation.

Observe that using the splitting $\T Q=\HH Q\oplus_Q\V Q$, we can decompose the generator $\xi(t)$ itself into its horizontal and vertical parts $\xi(t)=\wt Y(t)+\wt b(t)$, where $\wt Y(t)$ is a horizontal lift of some base curve $Y(t)\in \T_{x(t)} M$ to $q(t)$ and $\wt b(t)$ is a fundamental vector field associated with $b(t)\in \g$ taken at a point $q(t)$. Clearly, due to \eqref{eqn:var_linear}, we have $\del_{\xi(t)}\wt X(t)=\del_{\wt Y(t)}\wt X(t)+\del_{\wt b(t)}\wt X(t)$. In the result below we describe the $\T\V Q$-parts of these two components.

\begin{lem}[{\change The structure of an admissible variation}]\label{lem:var_structure} \newnew{To jest kluczowy wynik, więc chyba lepiej wypisać nasze dane explicite}.
{\change Let $\wt X(t)\in \HH_{q(t)}Q$ be a horizontal curve and $\del_{\xi(t)}\wt X(t)=\del_{\wt Y(t)}\wt X(t)+\del_{\wt b(t)}\wt X(t)$ an admissible variation generated by $\xi(t)=\wt Y(t)+\wt b(t)\in\HH_{q(t)} Q\oplus \V_{q(t)} Q=\T_{q(t)}Q$. Then}
\begin{enumerate}[(i)]
\item \label{point:lem_var_1} In the canonical identification $\T\V Q\approx \T Q\times \g\times\g$, the $\T\V Q$-part of the admissible variation $\del_{\wt b(t)}\wt X(t)$ corresponds to 
\begin{equation}\label{eqn:var_b}
\left(\wt b(t),0,\dot b(t)+\B(q(t))(X(t),b(t))\right)\ .
\end{equation}

\item \label{point:lem_var_2}  The $\T\V Q\approx\T Q\times\g\times\g$-part of the nonholonomic admissible variation $\del_{\wt Y(t)}\wt X(t)$ corresponds to 
\begin{equation}\label{eqn:var_Y}
\left(\wt Y(t),0,\RR(q(t))(X(t),Y(t))\right).
\end{equation}

\item \label{point:lem_var_3} Consequently, the standard admissible variation $\del_{\xi(t)}\wt X(t)$ is tangent to $\HH Q$ (i.e., it is a vakonomic admissible variation) if and only if $b(t)$ satisfies the following linear ODE
\begin{equation}\label{eqn:b_tang_HQ}
\dot b(t)+\B(q(t))(X(t),b(t))+\RR(q(t))(X(t),Y(t))=0. 
\end{equation}
\end{enumerate}
\end{lem}
\begin{proof} Recall the local coordinates $(x^i,g^\alpha,y^j,a^\beta)$ on $\T Q$ and $=(x^i,g^\alpha, y^j,a^\beta,x'^l,g'^\gamma,y'^l,a'^\delta)$ on $\T\T Q$ introduced above. 
The horizontal admissible path $\wt X(t)$ corresponds to a curve $(x^i(t),g^\alpha(t),y^j(t),0)$ (such that $\dot x^i(t)=\A ij(q(t)) y^j(t)$ and $\dot g^\alpha =\A \alpha j(q(t))y^j(t)$), while the generator $\xi(t)$ corresponds to  $(x^i(t),g^\alpha(t),z^j(t),b^\beta(t))$ with the same $x^i(t)$ and $g^\alpha(t)$. Clearly, the horizontal part of $\xi(t)$ is  $\wt Y(t)\sim(x^i(t),g^\alpha(t),z^j(t),0)$ and its vertical part is $\wt b(t)\sim(x^i(t),g^\alpha(t),0,b^\beta(t))$.

In this setting the assertion can be proved by a direct coordinate calculation.  
Applying formula \eqref{eqn:var_coord},
describing the coordinate form of the admissible variation (taking into account the coefficients $\br abc(q)$ of the Lie brackets, and transition matrices $\A ab(q)$ described in Proposition~\ref{prop:coeff_bracket} and in equation \eqref{eqn:coord_chapl}) one easily checks that $\del_{\wt b(t)}\wt X(t)$ corresponds to $\dot y^j=0$, as well as to $\dot x^k=0$, $\dot g^\gamma=\A \gamma\alpha(q)b^\alpha$, $a^\beta=0$ and $\dot a^\delta=\dot b^\delta(t)+\B^\delta_{i\alpha}(q(t))y^i(t) b^\alpha(t)$. The last three of these equations mean that the $\T Q\times\g\times\g$-part of $\del_{\wt b(t)}\wt X(t)$ is precisely \eqref{eqn:var_b}. The fact that the $\T Q$-component is $\wt b(t)$ follows also directly from Proposition~\ref{prop:prop_variation} part \eqref{point:prob_var_3}. This proves part \eqref{point:lem_var_1}.
\medskip

Again by Proposition~\ref{prop:prop_variation} part \eqref{point:prob_var_3} the $\T Q$-component of $\del_{\wt Y(t)}\wt X(t)$ is simply $\wt Y(t)$. 
A similar calculation as before shows that for this variation $a^\gamma=0$ and  $\dot a^\beta=\RR^\beta_{i j}(q(t))y^i(t) z^j(t)$, which proves part \eqref{point:lem_var_2}.
\medskip

 From the linearity of the variation \eqref{eqn:var_linear}, we conclude that \eqref{eqn:b_tang_HQ} is satisfied if and only the $\g\times\g$-component in the $\T\V Q\approx \T Q\times\g\times \g$-part of the variation $\del_{\xi(t)}\wt X(t)$ vanishes. But this, in turn, means that the $\T\V Q$-part of this variation is trivial, and hence that the variation belongs to $\T\HH Q$. This proves part \eqref{point:lem_var_3}.
\end{proof}

\begin{remark}
It follows from the above proof and from local forms of vectors $h_b\wt X$ and $\V_{\wt X}\wt b$ (considered at the end of the previous subsection) that we can decompose the variation $\del_{\wt b}\wt X$ into the following sum (with respect to the vector bundle structure $\tau_{\T Q}:\T\T Q\ra\T Q$)  
$$\del_{\wt b}\wt X= h_b\wt X+\V_{\wt X}\left(\dot b+\B(q)(X,b)\right).$$
Hence, in the light of \eqref{eqn:BL} and \eqref{eqn:FL}, the derivative  of $L$ at $\wt X$ in the direction of $\del_{\wt b}{\wt X}$ reads as
\begin{equation}\label{eqn:var_L_b}\begin{split} 
&\<\dd L(\wt X(t)), \del_{\wt b(t)}\wt X(t)>=\<\BL(\wt X(t)),b(t)>+\<\FL(\wt X(t)),\dot b(t)+\B(q(t))(X(t),b(t))>=\\
&\phantom{XXX}\<\BL(\wt X(t))-\frac{\dd}{\dd t}\FL(\wt X(t)),b(t)>+\<\FL(\wt X(t)),\B(q(t))(X(t),b(t))>-\frac{\dd}{\dd t}\<\FL(\wt X(t)),b(t)>.
\end{split}\end{equation}   
\end{remark}

\subsection{\change The comparison problems on Chaplygin systems}

\paragraph{The comparison problem.}
Now we are ready to formulate our main result. Its part \eqref{part:nh_vak} completely solves the comparison problem \eqref{main_question_1} for the (non-invariant) Chaplygin systems. Part \eqref{part:vak_nh} solves completely a variant of an inverse problem \eqref{main_question_2} when a vakonomic extremal corresponds to a particular choice of a Lagrange multiplier, whereas part characterizes \eqref{part:nh_st} these nonholonomic trajectories which are simultaneously extremals of an unconstrained dynamics. 

\begin{thm}\label{thm:chap_syst}
For the (non-invariant) Chaplygin system introduced above:
\begin{enumerate}[(a)]
\item \label{part:nh_st} A nonholonomic extremal $\wt X(t)\in \HH_{q(t)}Q$ is an unconstrained one if and only if
\begin{equation}\label{eqn:nh_st}
\<\BL(\wt X(t))-\frac{\dd}{\dd t}\FL(\wt X(t)),b>+\<\FL(\wt X(t)),\B(q(t))(X(t),b)>=0
\end{equation}
for every $t\in[t_0,t_1]$ and every vector $b\in\g$. 

\item \label{part:nh_vak} A nonholonomic extremal $\wt X(t)\in \HH_{q(t)}Q$  is a vakonomic one if and only if
\begin{equation}\label{eqn:nh_vak}
\int_I\<\BL(\wt X(t)),b(t)>\dd t=\int_I\<\FL(\wt X(t)),\RR(q(t))(X(t),Y(t))>\dd t
\end{equation}
for each pair $\wt Y(t)\in\HH_{q(t)}Q$ and $b(t)\in\g$ vanishing at the end-points and related by equation \eqref{eqn:b_tang_HQ}.

\item \label{part:vak_nh} A vakonomic extremal $\wt X(t)\in \HH_{q(t)}Q$ being a solution of an unconstrained problem with the modified Lagrangian $\wt L(Z,t)=L(Z)-\<\lambda(t),\cform(Z)>$, {\change for some multiplier $\lambda:I\ra\g^\ast$,} is a nonholonomic extremal if and only if
\begin{equation}\label{eqn:vak_nh}
\<\lambda(t),\RR(q(t))(X(t),Y)>=0
\end{equation} 
for every $t\in[t_0,t_1]$ and every vector $Y\in\T_{\pi(q(t))}M$. 
\end{enumerate}
\end{thm}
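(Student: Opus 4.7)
The plan is to exploit the framework of Section \ref{sec:var_calc}: instead of analyzing equations of motion, compare admissible variations via Propositions \ref{prop:constraints} and \ref{prop:two_var_prob}. The workhorse will be Lemma \ref{lem:var_structure}, which along any horizontal admissible path $\wt X(t) = \jet_t q(t)$ splits a standard admissible variation generated by $\xi(t)=\wt Y(t)+\wt b(t)$ into a $\del_{\wt Y}\wt X$ part and a $\del_{\wt b}\wt X$ part, together with formula \eqref{eqn:var_L_b} for the action variation induced by the latter. In all three parts the key observation is the same: because $\wt Y$ vanishing at the end-points generates a \emph{nonholonomic} admissible variation, the $\del_{\wt Y}\wt X$ contribution to $\<\dd S_L(\wt X),\cdot>$ automatically vanishes for any nonholonomic extremal. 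All that remains is the $\del_{\wt b}\wt X$ piece.

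\textbf{Part \eqref{part:nh_st}.} The nonholonomic principle $\PP^{nh}_{\HH Q}$ is a restriction of $\PP^{st}_{\T Q}$ obtained by forcing $b\equiv 0$; by Proposition \ref{prop:constraints}, the two sets of extremals differ precisely by allowing an extra vertical generator $\wt b$. Thus a nonholonomic extremal $\wt X$ is unconstrained iff $\int_I\<\dd L(\wt X),\del_{\wt b}\wt X>\dd t=0$ for every $b(t)\in\g$ vanishing at the end-points. Plug \eqref{eqn:var_L_b} in; the total derivative $\frac{\dd}{\dd t}\<\FL(\wt X),b>$ integrates to zero by the boundary condition, leaving
\[
\int_I\Bigl[\<\BL(\wt X)-\tfrac{\dd}{\dd t}\FL(\wt X),b(t)>+\<\FL(\wt X),\B(q)(X,b(t))>\Bigr]\dd t=0.
\]
Since $b(t)$ ranges freely over $\g$-valued curves with vanishing end-points, the fundamental lemma of the calculus of variations in $\g^\ast$ forces the pointwise identity \eqref{eqn:nh_st}.

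\textbf{Part \eqref{part:nh_vak}.} By Lemma \ref{lem:var_structure}\eqref{point:lem_var_3}, the vakonomic variations with vanishing end-points are exactly the $\del_{\wt Y+\wt b}\wt X$ with both $Y(t)$ and $b(t)$ vanishing at $t_0,t_1$ and linked by \eqref{eqn:b_tang_HQ}. Decomposing the action variation, the $\del_{\wt Y}\wt X$ piece vanishes by nonholonomic extremality, so $\wt X$ is vakonomic iff $\int_I\<\dd L(\wt X),\del_{\wt b}\wt X>\dd t=0$ for every such admissible pair $(Y,b)$. Substituting the ODE $\dot b+\B(q)(X,b)=-\RR(q)(X,Y)$ directly into \eqref{eqn:var_L_b} reduces this to
\[
\int_I\<\BL(\wt X),b(t)>\dd t-\int_I\<\FL(\wt X),\RR(q)(X,Y(t))>\dd t=0,
\]
which is precisely \eqref{eqn:nh_vak}. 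Note no appeal to the fundamental lemma is needed here.

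\textbf{Part \eqref{part:vak_nh}.} By \eqref{eqn:EL_vac} applied to the constraints $\HH Q$ (which are cut out by the components of $\cform$), the assumption that $\wt X$ is a vakonomic extremal with multiplier $\lambda(t)$ means $\wt X$ is a critical point of the unconstrained action for $\wt L=L-\<\lambda(t),\cform>$. Hence $\int_I\<\dd\wt L(\wt X),\del_{\wt Y}\wt X>\dd t=0$ for every horizontal generator $Y$ vanishing at the end-points. Subtracting, $\wt X$ is a nonholonomic extremal of $L$ iff $\int_I\<\dd\<\lambda,\cform>(\wt X),\del_{\wt Y}\wt X>\dd t=0$. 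Since $\<\lambda,\cform>$ is fiber-linear on $\T Q$, its derivative on a variation reads off the $\Cform$-component; by Lemma \ref{lem:var_structure}\eqref{point:lem_var_2} this equals $\<\lambda(t),\RR(q(t))(X(t),Y(t))>$. The vanishing of the resulting integral for all $Y$ with vanishing end-points gives, via the fundamental lemma, the pointwise condition \eqref{eqn:vak_nh}.

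The only nontrivial steps are the two applications of the fundamental lemma in parts \eqref{part:nh_st} and \eqref{part:vak_nh} (which require that the test families $b(t)\in\g$ and $Y(t)\in\T_{\pi(q(t))}M$ be sufficiently rich, which they are), and the identification in part \eqref{part:vak_nh} of the derivative of a fiber-linear function along a standard admissible variation with the $\Cform$-component of that variation; the rest is a direct algebraic manipulation of formula \eqref{eqn:var_L_b} combined with the ODE \eqref{eqn:b_tang_HQ}.
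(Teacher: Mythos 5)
Your proposal is correct and follows essentially the same route as the paper's own proof: decompose the generator via the splitting $\T Q=\HH Q\oplus_Q\V Q$, kill the $\del_{\wt Y}\wt X$ contribution by nonholonomic extremality, then handle the vertical piece via formula \eqref{eqn:var_L_b} (with the ODE \eqref{eqn:b_tang_HQ} substituted in part \eqref{part:nh_vak}, and the identification $\Cform(\del_{\wt Y}\wt X)=\RR(q)(X,Y)$ from Lemma \ref{lem:var_structure} in part \eqref{part:vak_nh}). The only differences are cosmetic, e.g.\ your framing of part \eqref{part:nh_st} via Proposition \ref{prop:constraints} and your explicit remark that part \eqref{part:nh_vak} needs no fundamental lemma, both of which match the paper's reasoning.
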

\begin{proof}
We shall first prove item \eqref{part:nh_st}. Our idea is very simple. Let us take a nonholonomic extremal $\wt X(t)\in\HH_{q(t)}Q$ and a generator $\xi(t)=\wt Y(t)+\wt b(t)$ (vanishing at the end-points) and consider the associated standard admissible variation $\del_{\xi(t)}\wt X(t)$. In accordance with the spirit of Proposition~\ref{prop:two_var_prob}, we would like to compare this variation with some nonholonomic admissible variation (with vanishing end-points). The splitting of the generator $\xi(t)=\wt Y(t)+\wt b(t)$ provides a natural candidate for such a variation, namely $\del_{\wt Y(t)}\wt X(t)$. From the linearity of the variation with respect to the generator \eqref{eqn:var_linear}, we have
\begin{equation}\label{eqn:comp_var_chapl}
\<\dd L(\wt X(t)),\del_{\xi(t)}\wt X(t)>=\<\dd L(\wt X(t)),\del_{\wt Y(t)}\wt X(t)>+\<\dd L(\wt X(t)),\del_{\wt b(t)}\wt X(t)>
\end{equation}
or, in the integrated version,
$$\<\dd S_L(\wt X),\del_{\xi}\wt X>=\<\dd S_L(\wt X),\del_{\wt Y}\wt X>+\<\dd S_L(\wt X),\del_{\wt b}\wt X>.$$
Since $\wt X(t)$ is a nonholonomic extremal, ot follows $\<\dd S_L(\wt X),\del_{\wt Y}\wt X>=0$, and thus 
\begin{equation}\label{eqn:proof_thm_1}
\<\dd S_L(\wt X),\del_{\xi}\wt X>=\<\dd S_L(\wt X),\del_{\wt b}\wt X>.
\end{equation}
Integrating \eqref{eqn:var_L_b} we get
\begin{align*}\<\dd S_L(\wt X),\del_{\wt b}\wt X>=&\int_I \<\BL(\wt X(t))-\frac{\dd}{\dd t}\FL(\wt X(t)),b(t)>+\<\FL(\wt X(t)),\B(\wt X(t),b(t))>\dd t+\\
&\<FL(\wt X(t)),b(t)>\bigg|^{t_1}_{t_0}.
\end{align*}
By \eqref{eqn:proof_thm_1}, vanishing of $\<\dd S_L(\wt X),\del_{\wt b}\wt X>$ ,for every $b(t)\in\g$ vanishing at the end-points, is a necessary and sufficient condition for $\wt X(t)$ to be an unconstrained extremal. In the light of the above equation, it is equivalent to the vanishing of the integrand for every such $b(t)$. This proves item \eqref{part:nh_st}. 
\medskip 

To prove \eqref{part:nh_vak} we shall proceed analogously with a modification that $\xi(t)$ should now be a generator of a vakonomic admissible variation (still vanishing at the end-points). By Lemma~\ref{lem:var_structure}  such generators are characterized by equation \eqref{eqn:b_tang_HQ}. Therefore we can modify \eqref{eqn:var_L_b} to the following form 
$$\<\dd L(\wt X(t)),\del_{\wt b(t)}\wt X(t)>=\<\BL(\wt X(t)),b(t)>-\<\FL(\wt X(t)),\RR(q(t))(X(t),Y(t))>.$$
Thus $\<\dd S_L(\wt X),\del_{\wt b}\wt X>=0$ (and hence $\wt X(t)$ is a vakonomic extremal) if and only if
$$\int_I\<\BL(\wt X(t)),b(t)>-\<\FL(\wt X(t)),\RR(q(t))(X(t),Y(t))>\dd t=0$$
for every $\wt Y(t)$ and $b(t)$ as considered above. 
\medskip 

The proof of item \eqref{part:vak_nh} is conceptually not much different from the proofs of the two previous parts. Let us start with explaining why the Lagrangian modified by a multiplier takes the form $\wt L(Z,t)=L(Z)-\<\lambda(t), \cform(Z)>$ for some $\lambda(t)\in\g^\ast$. This becomes clear, in the light of Remark~\ref{rem:multipliers}, if one observes that the horizontal distribution $\HH Q\subset \T Q$ is characterized by the equation $\cform(Z)=0$, where $Z\in\T Q$. 

Take now any nonholonomic admissible variation $\del_{\wt Y(t)}\wt X(t)$ with vanishing end-points. Since this variation is, in particular, also a standard admissible variation with vanishing end-points, and from the fact that $\wt X(t)$ is a solution of an unconstrained problem with the modified Lagrangian, we know that
$$\<\dd S_{\wt L}(\wt X),\del_{\wt Y}\wt X>=0.$$

Now observe that
\begin{align*}\<\dd {\wt L}(\wt X(t)),\del_{\wt Y(t)}\wt X(t)>=&\<\dd  L(\wt X(t)),\del_{\wt Y(t)}\wt X(t)>-\<\lambda(t), \Cform\left(\del_{\wt Y(t)}\wt X(t)\right)>\overset{\eqref{eqn:var_Y}}=\\
&\<\dd  L(\wt X(t)),\del_{\wt Y(t)}\wt X(t)>-\<\lambda(t), \RR(q(t))(X(t),Y(t))>
\end{align*}
and thus, after integrating, 
$$0=\<\dd S_{\wt L}(\wt X),\del_{\wt Y}\wt X>=\<\dd S_{ L}(\wt X),\del_{\wt Y}\wt X>-\int_I\<\lambda(t), \RR(q(t))(X(t),Y(t))>\dd t.$$
We conclude that 
$$\<\dd S_{ L}(\wt X),\del_{\wt Y}\wt X>=\int_I\<\lambda(t), \RR(q(t))(X(t),Y(t))>\dd t,$$
and hence $\<\dd S_{ L}(\wt X),\del_{\wt Y}\wt X>=0$ (i.e., $\wt X(t)$ is a nonholonomic extremal) if and only if the above integral vanishes for every $Y(t)$. By the standard argument this implies \eqref{eqn:vak_nh}.
\end{proof}

\paragraph{\change Determination of the vakononomic multiplier.}

Let us now explore some natural questions related with our results.
First of all, as simple consequence of our consideration from the proof of Theorem~\ref{thm:chap_syst}, we get the following characterization of vakonomic extremals corresponding to a prescribed multiplier $\lambda(t)\in\g^\ast$.

\begin{prop}\label{prop:chapl_vak} A horizontal curve $\wt X(t)\in\HH_{q(t)}Q$ is a vakonomic extremal associated with a modified Lagrangian $\wt L(Z,t)=L(Z)-\<\lambda(t),\cform(Z)>$ if and only if it satisfies the following two conditions:
\begin{equation}\label{eqn:vak_chapl_1}
\<\dd S_{ L}(\wt X),\del_{\wt Y}\wt X>=\int_I\<\lambda(t), \RR(q(t))(X(t),Y(t))>\dd t,
\end{equation}
for every $Y(t)\in\T_{\pi(q(t))}M$ vanishing at the end-points, and
\begin{equation}\label{eqn:vak_chapl_lambda}
\<\BL(\wt X(t))-\frac{\dd}{\dd t}\left(\FL(\wt X(t))-\lambda(t)\right),b>+\<\FL(\wt X(t))-\lambda(t),\B(q(t))(X(t),b)>=0
\end{equation}
for every $b\in\g$. 
\end{prop}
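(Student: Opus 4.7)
The plan is to exploit the fact that, by the very definition of a vakonomic extremal via equation \eqref{eqn:EL_vac} (and the observation made in the proof of Theorem \ref{thm:chap_syst}\eqref{part:vak_nh} that the constraints $\HH Q$ are cut out by $\cform(Z)=0$), the curve $\wt X(t)$ is vakonomic with multiplier $\lambda(t)$ if and only if it is an unconstrained extremal of $\wt L$, i.e.
\[
\<\dd S_{\wt L}(\wt X),\del_{\xi}\wt X>=0
\]
for every generator $\xi(t)\in\T_{q(t)}Q$ with vanishing end-points. Now I would use the canonical splitting $\T Q=\HH Q\oplus_Q\V Q$ to decompose $\xi(t)=\wt Y(t)+\wt b(t)$, and use linearity of the variation \eqref{eqn:var_linear} to split the integrated pairing into a horizontal and a vertical piece. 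Since $\wt Y$ and $b$ are independent, the vanishing of the whole is equivalent to the separate vanishing of $\<\dd S_{\wt L}(\wt X),\del_{\wt Y}\wt X>$ and of $\<\dd S_{\wt L}(\wt X),\del_{\wt b}\wt X>$ over all admissible $\wt Y$, $b$. This equivalence yields both directions of the stated ``if and only if''.

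For the horizontal part, I would simply quote the computation already carried out in the proof of Theorem \ref{thm:chap_syst}\eqref{part:vak_nh}: using \eqref{eqn:var_Y} one has $\Cform(\del_{\wt Y}\wt X)=\RR(q)(X,Y)$, so that
\[
\<\dd S_{\wt L}(\wt X),\del_{\wt Y}\wt X>=\<\dd S_{L}(\wt X),\del_{\wt Y}\wt X>-\int_I\<\lambda(t),\RR(q(t))(X(t),Y(t))>\dd t,
\]
and vanishing of the left-hand side for all $\wt Y$ (vanishing at the end-points) is precisely \eqref{eqn:vak_chapl_1}.

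For the vertical part, the main calculation is to determine how $\BL$ and $\FL$ transform when $L$ is replaced by $\wt L=L-\<\lambda(t),\cform(\cdot)>$. Here the key observation is: the curve $q\mapsto\wt X_{q\cdot g_b(t)}$ used in \eqref{eqn:BL} stays in $\HH Q$, on which $\cform$ vanishes, so that $\BL^{\wt L}(\wt X)=\BL(\wt X)$; while the vertical lift curve $\wt X+t\wt b$ used in \eqref{eqn:FL} has $\Cform$-coordinate exactly $b$, so that $\FL^{\wt L}(\wt X)=\FL(\wt X)-\lambda(t)$. Once these identities are in hand, applying identity \eqref{eqn:var_L_b} to $\wt L$ instead of $L$ gives
\[
\<\dd S_{\wt L}(\wt X),\del_{\wt b}\wt X>=\int_I\Bigl[\<\BL(\wt X)-\tfrac{\dd}{\dd t}(\FL(\wt X)-\lambda),b>+\<\FL(\wt X)-\lambda,\B(q)(X,b)>\Bigr]\dd t,
\]
the boundary term dropping out because $b$ vanishes at the end-points. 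By the fundamental lemma of the calculus of variations, the vanishing of this integral for every such $b$ is equivalent to \eqref{eqn:vak_chapl_lambda}.

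The only genuine subtlety is the bookkeeping of the transformation $L\mapsto\wt L$ at the level of $\BL$ and $\FL$; everything else is a direct assembly of pieces already established (the splitting of admissible variations from Lemma \ref{lem:var_structure}, the identity \eqref{eqn:var_L_b}, and the computation from Theorem \ref{thm:chap_syst}\eqref{part:vak_nh}). Combining the two separate conditions gives the ``if'' direction, and splitting an arbitrary variation into its horizontal and vertical components via $\xi=\wt Y+\wt b$ gives the ``only if'' direction.
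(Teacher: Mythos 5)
Your proposal is correct and follows essentially the same route as the paper's proof: split the generator $\xi=\wt Y+\wt b$ via the canonical splitting, use linearity \eqref{eqn:var_linear} to treat the horizontal and vertical conditions separately, quote the computation from Theorem \ref{thm:chap_syst}\eqref{part:vak_nh} for \eqref{eqn:vak_chapl_1}, and integrate by parts for \eqref{eqn:vak_chapl_lambda}. Your packaging of the vertical step as the transformation rules $\BL^{\wt L}=\BL$ and $\FL^{\wt L}=\FL-\lambda(t)$ is just a cosmetic rewording of the paper's direct expansion $\<\dd\wt L,\del_{\wt b}\wt X>=\<\dd L,\del_{\wt b}\wt X>-\<\lambda(t),\dot b(t)+\B(q(t))(X(t),b(t))>$ via \eqref{eqn:var_b} and \eqref{eqn:var_L_b}, and both yield the same integrand.
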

 Note that formula \eqref{eqn:vak_chapl_lambda} can be understood as a linear \emph{equation defining the vakonomic multiplier}. Observe also that \eqref{eqn:vak_chapl_lambda} for $\lambda(t)=0$ gives condition \eqref{eqn:nh_st} from Theorem~\ref{thm:chap_syst}. This is not just a coincidence: an unconstrained extremal is a vakonomic extremal with trivial vakonomic multiplier. 

\begin{proof}
The horizontal curve $\wt X(t)$ is a vakonomic extremal associated with the multiplier $\lambda(t)$ if and only if 
$\<\dd S_{\wt L},\del_{\xi}\wt X>$ vanishes on every
admissible variation $\del_{\xi(t)}\wt X(t)$ with vanishing end-points. Splitting the generator into its horizontal and vertical parts $\xi(t)=\wt Y(t)+\wt b(t)$, and using the linearity \eqref{eqn:var_linear} it amounts to check conditions $\<\dd S_{\wt L},\del_{\wt Y}\wt X>=0$ and $\<\dd S_{\wt L},\del_{\wt b}\wt X>=0$ separately. 

In light of the proof of Theorem~\ref{thm:chap_syst} \eqref{part:vak_nh}, condition $\<\dd S_{\wt L},\del_{\wt Y}\wt X>=0$ is equivalent to \eqref{eqn:vak_chapl_1}.
Now, 
\begin{align*}\<\dd {\wt L}(\wt X(t)),\del_{\wt b(t)}\wt X(t)>=&\<\dd  L(\wt X(t)),\del_{\wt b(t)}\wt X(t)>-\<\lambda(t), \Cform\left(\del_{\wt b(t)}\wt X(t)\right)>\overset{\eqref{eqn:var_b}}=\\
&\<\dd  L(\wt X(t)),\del_{\wt b(t)}\wt X(t)>-\<\lambda(t), \dot b(t)+\B(q(t))(X(t),b(t))>\overset{\eqref{eqn:var_L_b}}=\\
&\<\BL(\wt X(t)),b(t)>+\<\FL(\wt X(t))-\lambda(t),\dot b(t)+\B(q(t))(X(t),b(t))>
\end{align*}
Integrating the above equality by parts (and using the fact that $b(t)$ vanishes at the end-points) we get
\begin{align*}
&\<\dd S_{\wt L}(\wt X),\del_{\wt b}\wt X>=\\
& =\int_I\left[\<\BL(\wt X(t))-\frac{\dd}{\dd t}\left(\FL(\wt X(t))-\lambda(t)\right),b(t)>+\<\FL(\wt X(t))-\lambda(t),\B(q(t))(X(t),b(t))>\right]\dd t.
\end{align*}
By the standard reasoning, the vanishing of this integral for every $b(t)\in\g$ vanishing at the end points is equivalent to \eqref{eqn:vak_chapl_lambda}.
\end{proof}

\paragraph{\change Extremals that are simultaneously vakonomic and nonholonomic.}
Another interesting issue is the relation between parts \eqref{part:nh_vak} and \eqref{part:vak_nh} in Theorem~\ref{thm:chap_syst}. Both parts give necessary and sufficient conditions for a trajectory to be simultaneously a vakonomic and a nonholonomic extremal. Therefore we should expect that conditions \eqref{eqn:nh_vak} and \eqref{eqn:vak_nh} are equivalent. This is indeed the case, when the form of the vakonomic multiplier \eqref{eqn:vak_chapl_lambda} is taken into account. Therefore 
condition \eqref{eqn:nh_vak} can be viewed as a version of \eqref{eqn:vak_nh} when we have no explicit knowledge of the vakonomic multiplier. Note, however, that  the equivalence of these two conditions is a non-trivial statement. 

\begin{lem}\label{lem:relation_b_c}
Conditions \eqref{eqn:nh_vak} and \eqref{eqn:vak_nh} are equivalent. More precisely, \eqref{eqn:vak_nh} for some multiplier $\lambda(t)$ satisfying \eqref{eqn:vak_chapl_lambda} implies \eqref{eqn:nh_vak}. Conversely, if \eqref{eqn:nh_vak} holds, then there exists a multiplier $\lambda(t)$ satisfying \eqref{eqn:vak_chapl_lambda} such that \eqref{eqn:vak_nh} holds. 
\end{lem}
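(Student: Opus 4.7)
The proof has two directions. For the easy direction ($\Leftarrow$), I would pair \eqref{eqn:vak_chapl_lambda} with the curve $b(t)$ and integrate over $I$; integration by parts on the $\frac{\dd}{\dd t}(\FL-\lambda)$ term produces no boundary contribution since $b$ vanishes at the end-points. Substituting \eqref{eqn:b_tang_HQ} converts $\dot b + \B(q)(X,b)$ into $-\RR(q)(X,Y)$, and then \eqref{eqn:vak_nh} kills the $\lambda$-contribution. What remains is precisely \eqref{eqn:nh_vak}.

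For the converse ($\Rightarrow$), the strategy is a finite-dimensional duality argument. First observe that \eqref{eqn:vak_chapl_lambda} is a linear first-order ODE in $\lambda(t)\in\g^*$, so its solutions form an affine space parametrized by the value at any single time, e.g.~by $\lambda(t_1)$. Pick an arbitrary solution $\lambda_0$. For any base curve $Y(t)$ vanishing at the end-points, let $b_Y(t)$ be the unique solution of \eqref{eqn:b_tang_HQ} with $b_Y(t_0)=0$; in general $b_Y(t_1)\neq 0$, so the pair $(Y,b_Y)$ need not be of the type considered in part \eqref{part:nh_vak}. Setting
\begin{equation*}
J(Y) := \int_I \<\BL(\wt X(t)), b_Y(t)> \dd t - \int_I \<\FL(\wt X(t)), \RR(q(t))(X(t),Y(t))> \dd t,
\end{equation*}
and repeating the integration-by-parts from the easy direction while retaining the surviving end-point contribution, I obtain the identity
\begin{equation*}
J(Y) = \<\FL(\wt X(t_1))-\lambda_0(t_1),\, b_Y(t_1)> - \int_I \<\lambda_0(t), \RR(q(t))(X(t),Y(t))> \dd t.
\end{equation*}

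Hypothesis \eqref{eqn:nh_vak} says precisely that $J(Y)=0$ whenever $b_Y(t_1)=0$. Since both $Y\mapsto b_Y(t_1)\in\g$ and $Y\mapsto J(Y)\in\R$ are linear and $J$ annihilates the kernel of the former, elementary linear algebra (no topology is needed because the target $\g$ is finite-dimensional) furnishes $\eta\in\g^*$ such that $J(Y)=\<\eta,b_Y(t_1)>$ for every admissible $Y$. Plugging this into the identity above rearranges to
\begin{equation*}
\int_I \<\lambda_0, \RR(q)(X,Y)> \dd t = \<\FL(\wt X(t_1))-\lambda_0(t_1)-\eta,\, b_Y(t_1)>.
\end{equation*}
I now produce the desired multiplier by integrating the ODE underlying \eqref{eqn:vak_chapl_lambda} \emph{backwards} in time from the terminal condition $\lambda(t_1) := \FL(\wt X(t_1))-\eta$. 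For this $\lambda$ the right-hand side of the last identity vanishes, so $\int_I \<\lambda(t), \RR(q(t))(X(t),Y(t))> \dd t = 0$ for every base curve $Y$ vanishing at the end-points. A standard bump-function/fundamental-lemma argument then upgrades this integral identity to the pointwise statement \eqref{eqn:vak_nh}.

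The only non-routine step is the duality: recognizing that \eqref{eqn:nh_vak} asserts the vanishing of a scalar linear functional on the kernel of a linear map into the \emph{finite-dimensional} space $\g$, which legitimizes the extraction of $\eta$ and hence pins down the correct terminal condition for $\lambda$. Everything else -- the integration by parts, the backward ODE, and the fundamental lemma of the calculus of variations -- is standard.
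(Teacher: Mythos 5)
Your proof is correct and takes essentially the same route as the paper's: your integration-by-parts identity is exactly the paper's pointwise relation \eqref{eqn:b_c} in integrated form, your observation that under \eqref{eqn:nh_vak} the functional $J$ factors through $b_Y(t_1)\in\g$ (yielding $\eta\in\g^\ast$) is the paper's argument that the functional depends only on $b(t_1)$ (yielding its $\alpha$), and the terminal condition $\lambda(t_1)=\FL(\wt X(t_1))-\eta$ with a fundamental-lemma upgrade to the pointwise statement \eqref{eqn:vak_nh} matches the paper's conclusion. Your write-up is, if anything, slightly more explicit than the paper about the linear-algebra extraction of $\eta$ and the backward integration of the linear ODE \eqref{eqn:vak_chapl_lambda}.
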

\begin{proof}
Choose and admissible path $\wt X(t)\in\HH_{q(t)}Q$. 
Consider any generator of a vakonomic variation (i.e., a pair $Y(t)\in\T_{\pi(q(t))}M$ and $b(t)\in\g$ satisfying \eqref{eqn:b_tang_HQ}) and a multiplier $\lambda(t)\in\g^\ast$ satisfying \eqref{eqn:vak_chapl_lambda}. We shall show that 
\begin{equation}\label{eqn:b_c}
-\<\lambda(t),\RR(q)(X,Y)>=\frac{\dd}{\dd t}\<\lambda(t)-\FL(\wt X),b>+\<\BL(\wt X),b>-\<\FL(\wt X),\RR(q)(X,Y)>.
\end{equation}
Indeed, the above formula can be justified by the following calculation (for the simplicity of notation we do not write the time dependence explicitly):
\begin{align*}
-\<\lambda,\RR(q)(X,Y)>\overset{\eqref{eqn:b_tang_HQ}}=&\<\lambda,\dot b+\B(q)(X,b)>=\frac{\dd}{\dd t}\<\lambda,b>-\<\dot\lambda,b>+\<\lambda,\B(q)(X,b)>\overset{\eqref{eqn:vak_chapl_lambda}}=\\
&\frac{\dd}{\dd t}\<\lambda,b>+\<\BL(\wt X),b>-\<\frac{\dd}{\dd t}\FL(\wt X),b>+\<\FL(\wt X),\B(q)(X,b)>=\\
&\frac{\dd}{\dd t}\<\lambda-\FL(\wt X),b>+\<\BL(\wt X),b>+\<\FL(\wt X),\dot b+\B(q)(X,b)>\overset{\eqref{eqn:b_tang_HQ}}=\\
&\frac{\dd}{\dd t}\<\lambda-\FL(\wt X),b>+\<\BL(\wt X),b>-\<\FL(\wt X),\RR(q)(X,Y)>.
\end{align*}  
Assume now that \eqref{eqn:vak_nh} holds, i.e., the left-hand side of \eqref{eqn:b_c} vanishes. Restrict our attention to those $b(t)$'s that are the solutions of \eqref{eqn:b_tang_HQ} and additionally vanish at the end-points. Integrating \eqref{eqn:b_c} for such $b(t)$'s we get \eqref{eqn:nh_vak}.

The passage from \eqref{eqn:nh_vak} to \eqref{eqn:vak_nh} requires a more attention. Consider a class of solutions $b(t)=b_{Y}(t)$ of \eqref{eqn:b_tang_HQ} (for all possible $Y(t)$'s) with the initial condition $b(t_0)=0$. Of course we have no guarantee that $b(t_1)=0$. The crucial observation is that, if \eqref{eqn:nh_vak} holds, then the value of the linear functional
$$b(t) \longmapsto \<\dd S_L(\wt X),\del_{\wt b}\wt X>$$
(for the considered class of $b(t)$'s) depends on $b(t_1)$ only. Indeed, if $b(t)$ and $b'(t)$ are two solutions (for $Y(t)$ and $Y'(t)$, respectively) such that $b(t_1)=b'(t_1)$ then, since \eqref{eqn:b_tang_HQ} is linear, the difference $\Delta b(t)=b(t)-b'(t)$ is another solution (corresponding to $\Delta Y(t)=Y(t)-Y'(t)$), but now vanishing at the end points. Now from the proof of Theorem~\ref{thm:chap_syst} \eqref{part:nh_vak} we know that
$$\<\dd S_L(\wt X),\del_{\wt{\Delta b}}\wt X>=\int_{I}\<\BL(\wt X),b>-\<\FL(\wt X),\RR(q)(X,\Delta Y)>\dd t\overset{\eqref{eqn:nh_vak}}=0.$$
We conclude that 
$$\<\dd S_L(\wt X),\del_{\wt\Delta b}\wt X>= \<\dd S_L(\wt X),\del_{\wt b}\wt X>- \<\dd S_L(\wt X),\del_{\wt b'}\wt X>=0.$$
Hence, since any linear function on $b(t_1)\in\g$ is determined by an element of $\g^\ast$, there exists $\alpha\in\g^\ast$ such that
$$\int_{I}\<\BL(\wt X),b>-\<\FL(\wt X),\RR(q)(X,Y)>=\<\dd S_L(\wt X),\del_{\wt b}\wt X>=\<\alpha,b(t_1)>,$$
for any $b(t)$ from the considered class. Taking this into account and integrating \eqref{eqn:b_c} we get:
$$-\int_I\<\lambda,\RR(q)(X,Y)>\dd t=\<\lambda(t_1)-\FL(\wt X(t_1))+\alpha,b_Y(t_1)>.$$
Now it is enough to choose $\lambda(t)$ satisfying \eqref{eqn:vak_chapl_lambda} such that $\lambda(t_1)=\FL(\wt X(t_1))-\alpha$ to guarantee that $\int_I\<\lambda,\RR(q)(X,Y)>\dd t=0$
for any $Y(t)$. This implies \eqref{eqn:vak_nh}.  
\end{proof}


\paragraph{The symmetric case, relation with the classical results.}
Let us now see how our results look in the special cases of a (non-invariant) Chaplygin system subject to some symmetry conditions. We shall distinguish three particular situations: when $\HH Q$ is $G$-invariant, when $L$ is $G$-invariant, and the (standard) Chaplygin case (i.e., both the constraints and the Lagrangian are $G$-invariant). In the first case

\begin{cor}[invariant constraints]\label{cor:inv_constr} Assume that the constraints $\HH Q\subset\T Q$ are $G$-invariant (i.e., $(R_g)_\ast \wt X_q=\wt X_{q\cdot g}$ for any $g\in G$ and $X\in\T_{\pi(q)}M$). Then, by Remark~\ref{rem:curv_B_inv}, $\B\equiv 0$ and thus
\begin{itemize}
\item Equation \eqref{eqn:b_tang_HQ} defining the vakonomic variation reads as
$\dot b(t)+\RR(q(t))(X(t),Y(t))=0.$
\item Condition \eqref{eqn:nh_st} in Theorem~\ref{thm:chap_syst} \eqref{part:nh_st} reduces to $\frac{\dd}{\dd t}\FL(\wt X(t))=\BL(\wt X(t))$. 
\item the vakonomic Lagrange multiplier $\lambda(t)$ is a solution of the equation  $\frac{\dd}{\dd t}\left(\FL(\wt X(t))-\lambda(t)\right)=\BL(\wt X(t))$
\end{itemize}  
\end{cor}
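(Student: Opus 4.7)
The proof is essentially a direct substitution exercise once one verifies that the hypothesis of $G$-invariance of $\HH Q$ forces $\B \equiv 0$. The plan is therefore organized as follows.

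First I would establish that $\B \equiv 0$. By Remark \ref{rem:curv_B_inv}, the tensor $\wt\B(q)(X,a)$ measures the first-order defect between $(R_{g_a(t)})_\ast \wt X_q$ and $\wt X_{q\cdot g_a(t)}$ at $t=0$; when $\HH Q$ is $G$-invariant these two curves coincide identically in $t$, so their difference vanishes together with its derivative. Hence $\wt\B \equiv 0$, and consequently $\B = \cform\circ\wt\B \equiv 0$.

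Next, I would simply substitute $\B \equiv 0$ into the three formulas that need to be specialized. For the first bullet, equation \eqref{eqn:b_tang_HQ} reads $\dot b(t) + \B(q(t))(X(t),b(t)) + \RR(q(t))(X(t),Y(t)) = 0$, and killing the middle term gives $\dot b(t) + \RR(q(t))(X(t),Y(t)) = 0$. For the second bullet, condition \eqref{eqn:nh_st} of Theorem \ref{thm:chap_syst}\eqref{part:nh_st} becomes $\<\BL(\wt X(t)) - \tfrac{\dd}{\dd t}\FL(\wt X(t)), b\> = 0$ for all $b \in \g$; since $b$ ranges freely over $\g$, this is equivalent to the vector equation $\tfrac{\dd}{\dd t}\FL(\wt X(t)) = \BL(\wt X(t))$ in $\g^\ast$. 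For the third bullet, the multiplier equation \eqref{eqn:vak_chapl_lambda} of Proposition \ref{prop:chapl_vak} similarly collapses, its second term disappearing, to $\<\BL(\wt X(t)) - \tfrac{\dd}{\dd t}(\FL(\wt X(t)) - \lambda(t)), b\> = 0$ for every $b \in \g$, whence $\tfrac{\dd}{\dd t}(\FL(\wt X(t)) - \lambda(t)) = \BL(\wt X(t))$.

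There is really no obstacle here — all three assertions are immediate consequences of plugging $\B = 0$ into already-established identities and invoking the non-degeneracy of the pairing $\g \times \g^\ast \to \R$ to strip off the arbitrary test vector $b$. The only conceptual step that deserves to be written out explicitly is the reduction $\wt\B \equiv 0$ under $G$-invariance, which is already indicated in Remark \ref{rem:curv_B_inv} and so can be cited rather than re-proved.
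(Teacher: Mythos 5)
Your proposal is correct and follows exactly the paper's own route: the paper likewise derives $\B\equiv 0$ by citing Remark \ref{rem:curv_B_inv} and then obtains the three bullets by direct substitution of $\B=0$ into \eqref{eqn:b_tang_HQ}, \eqref{eqn:nh_st} and \eqref{eqn:vak_chapl_lambda}, stripping the arbitrary test vector $b\in\g$ via the pairing. Nothing is missing.
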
  

For the invariant Lagrangian we have
\begin{cor}[invariant Lagrangian]\label{cor:inv_lagr}
Assume that the Lagrangian $L:\T Q\ra\R$ is $G$-invariant (i.e., $L\left((R_g)_\ast Z\right)= L(Z)$ for any $g\in G$ and $Z\in\T_q Q$). Then, by Remark~\ref{rem:L_inv}, $\<\BL(\wt X),b>+\<\FL(\wt X),\B(q)(\wt X,b)>$ vanishes for every $b\in\g$, and thus
\begin{itemize}
\item Condition \eqref{eqn:nh_st} in Theorem~\ref{thm:chap_syst} \eqref{part:nh_st} reduces to $\FL(\wt X(t))=\mathrm{const}$. 
\item the vakonomic Lagrange multiplier $\lambda(t)$ is a solution of the equation  $\<\frac{\dd}{\dd t}\left(\FL(\wt X(t))-\lambda(t)\right),b>=\<\lambda(t),\B(q(t))(X(t),b)>$
for every $b\in\g$. 
\end{itemize}
\end{cor}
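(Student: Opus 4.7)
The plan is a direct substitution argument built on Theorem~\ref{thm:chap_syst}\eqref{part:nh_st} and Proposition~\ref{prop:chapl_vak}, with the invariance identity from Remark~\ref{rem:L_inv} as the only analytical input. Concretely, I want to plug the identity
\[
\<\BL(\wt X),b>+\<\FL(\wt X),\B(q)(X,b)>=0,\qquad b\in\g,
\]
into the two equations in question and read off the simplified statements.

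First I would note (or reprove) that identity. It follows by differentiating the $G$-invariance $L((R_{g(t)})_\ast\wt X_q)=L(\wt X_q)$ at $t=0$ for the curve $g(t)=\exp(t\cdot b)$, decomposing the first jet of ${R_{g(t)}}_\ast\wt X_q$ by means of the splitting $\T Q=\HH Q\oplus_Q\V Q$ (exactly as is done in Remark~\ref{rem:L_inv}), and then rewriting the two resulting summands via the definitions \eqref{eqn:BL}--\eqref{eqn:FL} of the horizontal and vertical derivatives of $L$. This is essentially what Remark~\ref{rem:L_inv} already provides, so I can simply invoke it.

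For the first bullet I would feed the identity into the characterisation \eqref{eqn:nh_st} of nonholonomic extremals that are unconstrained: the $\<\BL(\wt X),b>$ term cancels against $\<\FL(\wt X),\B(q)(X,b)>$, leaving $\<\tfrac{\dd}{\dd t}\FL(\wt X(t)),b>=0$ for every $b\in\g$. Therefore $\FL(\wt X(t))$ is constant in $t$, which is the Noether-type conservation law one expects for a $G$-invariant Lagrangian with the restriction to $\HH Q$.

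For the second bullet I would perform the same substitution in equation \eqref{eqn:vak_chapl_lambda} defining the vakonomic multiplier. Expanding the two pairings linearly and again using the cancellation $\<\BL(\wt X),b>+\<\FL(\wt X),\B(q)(X,b)>=0$, the terms involving $\BL$ and $\FL\!\circ\!\B$ drop out and one is left precisely with the stated equation relating $\tfrac{\dd}{\dd t}(\FL(\wt X)-\lambda)$ to $\lambda$ paired with $\B$. The only step requiring any care is the bookkeeping of the various pairings and their time derivatives; conceptually the corollary is a purely formal consequence of Theorem~\ref{thm:chap_syst}\eqref{part:nh_st}, Proposition~\ref{prop:chapl_vak} and Remark~\ref{rem:L_inv}, so I would not expect any genuine obstacle.
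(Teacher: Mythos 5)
Your overall strategy coincides with the paper's own (implicit) proof: the corollary is presented there with no separate argument, exactly as the substitution of the invariance identity of Remark \ref{rem:L_inv} into condition \eqref{eqn:nh_st} and into equation \eqref{eqn:vak_chapl_lambda}, and your treatment of the first bullet is correct as written -- the $\BL$ term cancels against the $\FL\circ\B$ term and only $\<\frac{\dd}{\dd t}\FL(\wt X(t)),b>=0$ for all $b\in\g$ survives, giving $\FL(\wt X(t))=\mathrm{const}$.

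For the second bullet, however, the cancellation does \emph{not} leave ``precisely the stated equation'', and this is exactly the bookkeeping you deferred. Expanding \eqref{eqn:vak_chapl_lambda} gives
\[
\<\BL(\wt X),b>-\<\frac{\dd}{\dd t}\left(\FL(\wt X)-\lambda\right),b>+\<\FL(\wt X),\B(q)(X,b)>-\<\lambda,\B(q)(X,b)>=0,
\]
and the identity $\<\BL(\wt X),b>+\<\FL(\wt X),\B(q)(X,b)>=0$ removes the first and third terms, leaving
\[
\<\frac{\dd}{\dd t}\left(\FL(\wt X(t))-\lambda(t)\right),b>=-\<\lambda(t),\B(q(t))(X(t),b)>,
\]
i.e.\ the displayed equation of the corollary with the \emph{opposite} sign on the right-hand side. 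Under the paper's own conventions (the sign of $\B$ fixed in Remark \ref{rem:curv_B_inv}, and the use of \eqref{eqn:vak_chapl_lambda} in the proof of Lemma \ref{lem:relation_b_c}, which is consistent with the computation above) this minus sign is unavoidable, so the corollary as printed carries a sign slip; the neighbouring reductions in Corollaries \ref{cor:inv_constr} and \ref{cor:chaplygin}, where $\B\equiv0$, are unaffected and check out. Your proposal reproduces the printed sign by asserting that the terms ``drop out'' to yield the statement verbatim, which conceals the one place where the computation actually had to be carried out: a proof along your lines should either record the corrected sign or explicitly flag the discrepancy with the statement.
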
  

Finally, if both the constraints and the Lagrangian are $G$-invariant then
\begin{cor}[the Chaplygin case]\label{cor:chaplygin}
For the standard Chaplygin system we have $\B\equiv 0$ and $\BL(\wt X)=0$, and consequently 
\begin{itemize}
\item Equation \eqref{eqn:b_tang_HQ} defining the vakonomic variation reads
$\dot b(t)+\RR(q(t))(X(t),Y(t))=0.$
\item Condition \eqref{eqn:nh_st} in Theorem~\ref{thm:chap_syst} \eqref{part:nh_st} reduces to $\FL(\wt X(t))=\mathrm{const}$. 
\item Condition \eqref{eqn:nh_vak} in Theorem~\ref{thm:chap_syst} \eqref{part:nh_vak} reduces to $\int_I\<\FL(\wt X(t)),\RR(q(t))(X(t),Y(t))>=0$.
\item the vakonomic multiplier takes the form $\lambda(t)=\FL(\wt X(t))+\textrm{const}$.
\end{itemize}  
\end{cor}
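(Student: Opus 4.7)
The plan is to show that the four bullet points are just routine specializations of Theorem~\ref{thm:chap_syst} and Proposition~\ref{prop:chapl_vak} once we know two things: (1) the tensor $\B$ vanishes identically, and (2) the horizontal derivative $\BL$ vanishes on every horizontal vector. The first assertion is immediate from the $G$-invariance of $\HH Q$ via Remark~\ref{rem:curv_B_inv}. For the second, I would combine the $G$-invariance of $L$ with the identity of Remark~\ref{rem:L_inv}:
\[
\<\BL(\wt X),b>+\<\FL(\wt X),\B(q)(\wt X,b)>=0\qquad\text{for every }b\in\g.
\]
Substituting $\B\equiv 0$ forces $\<\BL(\wt X),b>=0$ for every $b\in\g$, whence $\BL(\wt X)=0$ along any horizontal vector. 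This is the only conceptual step; the rest is bookkeeping.

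With these two simplifications in hand, I would go through the four bullets one by one. Plugging $\B\equiv 0$ into equation \eqref{eqn:b_tang_HQ} immediately yields the announced form $\dot b(t)+\RR(q(t))(X(t),Y(t))=0$ of the constraint characterizing generators of vakonomic variations. Plugging $\BL\equiv 0$ and $\B\equiv 0$ into \eqref{eqn:nh_st} reduces the condition to $\<\tfrac{\dd}{\dd t}\FL(\wt X(t)),b>=0$ for every $b\in\g$, i.e.\ $\FL(\wt X(t))=\mathrm{const}$. Plugging $\BL\equiv 0$ into the right-hand side of \eqref{eqn:nh_vak} kills the first integral and leaves exactly the stated vanishing condition for the second.

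For the last bullet, I would read off the vakonomic multiplier from equation \eqref{eqn:vak_chapl_lambda} of Proposition~\ref{prop:chapl_vak}. Substituting $\BL\equiv 0$ and $\B\equiv 0$ reduces it to
\[
\<\tfrac{\dd}{\dd t}\bigl(\FL(\wt X(t))-\lambda(t)\bigr),b>=0\qquad\text{for every }b\in\g,
\]
so that $\FL(\wt X(t))-\lambda(t)$ is constant in time and hence $\lambda(t)=\FL(\wt X(t))+\mathrm{const}$, as claimed.

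I do not foresee any genuine obstacle; the corollary is really an instance of ``plug in the symmetry assumptions and simplify''. The only place one should be slightly careful is to observe that $\FL(\wt X(t))$ is itself an element of $\g^\ast$ that varies with $t$, so the constancy conclusions are to be understood in $\g^\ast$ (which is legitimate because, for horizontal $\wt X$, both $\FL(\wt X(t))$ and $\lambda(t)$ naturally live in the same fixed dual space $\g^\ast$).
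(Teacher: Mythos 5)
Your proposal is correct and takes essentially the same route the paper intends for this corollary: $\B\equiv 0$ follows from the $G$-invariance of $\HH Q$ via Remark~\ref{rem:curv_B_inv}, $\BL(\wt X)=0$ then follows from the invariance identity of Remark~\ref{rem:L_inv} once $\B\equiv 0$ is substituted, and the four bullets are obtained by plugging these into \eqref{eqn:b_tang_HQ}, \eqref{eqn:nh_st}, \eqref{eqn:nh_vak} and \eqref{eqn:vak_chapl_lambda}, exactly as in the paper's specializations (Corollaries~\ref{cor:inv_constr} and \ref{cor:inv_lagr}). One trivial slip of wording: $\BL$ appears on the \emph{left}-hand side of \eqref{eqn:nh_vak}, so it is that integral which is killed, but your conclusion is the stated one and the argument stands.
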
  

Now we can relate our results to the classical results from \cite{Crampin_Mestdag_2010,Favretti_1998,Fernandez_Bloch_2008}.

\begin{remark}\label{rem:comparison}
\begin{enumerate}

\item In the (standard) Chaplygin case the explicit construction of the vakonomic multiplier (being the momentum map $\FL(\wt X(t))$ shifted by a constant) appeared in \cite{Favretti_1998} in Thm. 3.1 (ii) and Prop. 4, in \cite{Fernandez_Bloch_2008} in Prop. 3 (1) and Cor. 4, as well as along the lines of Sec. 6 in \cite{Crampin_Mestdag_2010}. Our formula \eqref{eqn:vak_chapl_lambda} is much more general as it allows to find the vakonomic multiplier also for systems without symmetry. A similar equation, in the coordinate form, can be found in Prop. 4 in \cite{Crampin_Mestdag_2010}, yet the relation of the multiplier and the momentum $\FL(\wt X(t))$ is not so obvious and the requirement that the multiplier is defined by section over $\HH Q$ is needed. Clearly, in the general Chaplygin case this last requirement may be too strong.    

\item Theorem~\ref{thm:chap_syst} \eqref{part:vak_nh} can be found in \cite{Fernandez_Bloch_2008} Prop. 2 (for systems of mechanical type with regular Lagrangians) and in Theorem 2 in \cite{Crampin_Mestdag_2010} (if the multiplier is defined by a section over $\HH Q$). Actually, it is a consequence of the general formula of Rumianstev \cite{Rumianstev_1978}, which requires the explicit knowledge of the vakonomic multiplier. 

As we can derive the explicit value of the vakonomic multiplier in the (standard) Chaplygin case (given in general by \eqref{eqn:vak_chapl_lambda}), one can formulate Theorem~\ref{thm:chap_syst} \eqref{part:vak_nh} in this specific setting. This is exactly Thm. 3.1 in \cite{Favretti_1998} (where the regularity of the Lagrangian is assumed), Prop 3. (2) in \cite{Fernandez_Bloch_2008} (for Abelian Chaplygin systems with regular mechanical Lagrangians), as well as Cor. 1 and Prop. 6 in \cite{Crampin_Mestdag_2010} (the Lagrangian has to be sufficiently regular).
Note that our result works in a more general geometric situation (no symmetry) and without any regularity assumptions.

Actually Favretti in \cite{Favretti_1998} formulates Theorem~\ref{thm:chap_syst} \eqref{part:vak_nh} for invariant affine constraints. In this paper we concentrated solely on the linear case, yet extending Theorem~\ref{thm:chap_syst} to the affine setting does not require much effort.   

\item The criterion from Theorem~\ref{thm:chap_syst} \eqref{part:nh_vak} was so far completely absent in the literature. According to Lemma~\ref{lem:relation_b_c} it can be understood as a version of Theorem~\ref{thm:chap_syst} \eqref{part:vak_nh} in the case that the explicit value of the vakonomic multiplier is unknown (or impossible to derive). Again no regularity condition for the Lagrangian, nor symmetry requirements are needed in this case. 

\item Observations similar to Theorem~\ref{thm:chap_syst} \eqref{part:nh_st} have been considered in the literature \cite{Favretti_1998,Fernandez_Bloch_2008} as the special cases of the more general result for vakonomic systems assuming the vanishing of the multiplier. Concrete examples were already discussed in Remark~\ref{rem:not_trivial}.

\item Fernandez and Bloch made, in \cite{Fernandez_Bloch_2008} Prop. 3 (3), a remark that the property of being conditionally variational (i.e. answering \eqref{main_question} positively) is not affected by adding a base dependent potential to the Lagrangian. In fact, it is obvious from our formula \eqref{eqn:nh_vak} that any change of the Lagrangian which does not change $\BL(\wt X)$ and $\<\FL(\wt X),\RR(q)(X,Y)>$ preserves this property.  
\end{enumerate}

\end{remark}

\newpage

\section{Left invariant systems on Lie groups}\label{sec:lie_groups}

In this section we shall solve the comparison problems \eqref{main_question_1} and \eqref{main_question_2} for a class of systems on Lie groups with left-invariant constraints. Such situations were considered for instance by Koiller \cite{Koiller_1992} under the name \emph{generalized rigid body with constrains} (which term he attributes to Arnold \cite{Arnold_Koz_Nies_2010}). In contrast to the standard treatment, here the invariance of the Lagrangian will not be assumed. These results are related to the (non-invariant) Chaplygin systems considered in the previous Section~\ref{sec:chaplygin} (see Remarks~\ref{rem:Chapl_as_Lie} and \ref{rem:Chapl_as_lie_1}), yet in some cases extend these as explained at the end of Remark~\ref{rem:Chapl_as_Lie}.


\paragraph{Geometric setting.}

Consider a Lie group $H$ and denote by $\h$ its tangent space at the identity, equipped with the canonical left Lie algebra structure $[\cdot,\cdot]_\h:\h\times\h\ra\h$. In the remaining part of this section we shall extensively use the canonical trivialization of $\T H$:
$$H\times\h\approx \T H,\qquad (h,\eta)\longmapsto h_\ast \eta,$$ 
induced by the left action of $H$ on itself. 

Now we shall describe the sets of standard admissible trajectories $\TT^{st}_{\T H}$ and admissible variations $\WW^{st}_{\T H}$ within this trivialization. We claim that
\begin{prop}\label{prop:Lie_triv_var} A curve  $(h(t),\eta(t))\in H\times\h$ corresponds to an admissible curve in $\T H$ if and only if 
\begin{equation}\label{eqn:adm_path_g}
h(t)_\ast \eta(t)=\jet_{t}h(t).
\end{equation}
In the induced trivialization $\T\T H\approx\T(H\times\h)\approx\T H\times\T\h\approx\T H\times\h\times\h$, the admissible variation along $(h(t),\eta(t))$ generated by $(h(t),\xi(t))\in\T_{h(t)}H$ corresponds to 
\begin{equation}\label{eqn:adm_var_g}
\left(h(t)_\ast \xi(t),\eta(t),\dot \xi(t)+[\eta(t),\xi(t)]_\h\right).
\end{equation}

\end{prop}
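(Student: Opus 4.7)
Part \eqref{eqn:adm_path_g} is immediate from the definitions: admissible paths in $\TT^{st}_{\T H}$ are by definition tangent lifts $\jet_t h(t)$ of base curves $h\colon I\to H$, and under the left trivialization $(h,\eta)\mapsto h_\ast\eta$ the equality $\jet_t h(t)=h(t)_\ast\eta(t)$ is precisely \eqref{eqn:adm_path_g}, uniquely determining the $\h$-valued function $\eta(t)$. For part \eqref{eqn:adm_var_g} the plan is to represent the variation $\del_\xi\gamma(t)=\kappa_H(\jet_t\xi(t))$ by means of a generating homotopy and read off its three components under the trivialization $\T\T H\approx\T H\times\h\times\h$.

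\textbf{Setup for the variation.} Choose a homotopy $h(t,s)\in H$ with $h(t,0)=h(t)$ and $\pa_s h(t,s)|_{s=0}=h(t)_\ast\xi(t)$; such a homotopy exists and generates $\del_\xi\gamma$ in the sense of Definition \ref{def:var_princ}. Define two-parameter $\h$-valued functions $\eta(t,s)$ and $\xi(t,s)$ by left-translating the partial derivatives,
\begin{equation*}
\pa_t h(t,s)=h(t,s)_\ast\eta(t,s),\qquad \pa_s h(t,s)=h(t,s)_\ast\xi(t,s),
\end{equation*}
so that $\eta(t,0)=\eta(t)$ and $\xi(t,0)=\xi(t)$. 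Under $\T H\approx H\times\h$ the tangent lift $\jet_t h(t,s)$ is the curve $s\mapsto(h(t,s),\eta(t,s))$, hence $\del_\xi\gamma(t)=\jet_{s=0}\jet_t h(t,s)$ corresponds to its $s$-derivative at $s=0$. The three components in $\T H\times\h\times\h$ are therefore $\pa_s h|_{s=0}=h(t)_\ast\xi(t)$, the base point $\eta(t,0)=\eta(t)$, and $\pa_s\eta|_{s=0}$, matching the first two slots of the claimed formula.

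\textbf{The main step.} The one nontrivial point is computing $\pa_s\eta(t,s)|_{s=0}$. For this I plan to invoke the (left) Maurer--Cartan equation $\dd\theta^L+\tfrac12[\theta^L,\theta^L]=0$, where $\theta^L(X_h)=h_\ast^{-1}X_h$ is the $\h$-valued Maurer--Cartan form on $H$. Pulling it back along $h(t,s)$ and evaluating on the commuting fields $\pa_t,\pa_s$ (whose $\theta^L$-pullbacks are $\eta$ and $\xi$ by construction) yields $\pa_t\xi-\pa_s\eta+[\eta,\xi]_\h=0$, and specialization to $s=0$ gives exactly $\pa_s\eta|_{s=0}=\dot\xi(t)+[\eta(t),\xi(t)]_\h$, as required. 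An equivalent verification proceeds via the coordinate formula \eqref{eqn:var_coord} applied to the anholonomic frame of left-invariant vector fields associated with a basis of $\h$: for this frame the structural coefficients $\br\gamma\alpha\beta(h)$ are constant and equal to the structure constants $\bra\gamma\alpha\beta$ of $\h$, so the last entry of \eqref{eqn:var_coord} reduces to $\dot\xi^d+\bra d{a}{b}\eta^a\xi^b$. The only real obstacle is the sign bookkeeping in the Maurer--Cartan relation, which must match the ``canonical left'' Lie algebra convention on $\h=\T_eH$ used throughout the paper.
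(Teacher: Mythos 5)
Your proof is correct, but your primary argument takes a genuinely different route from the paper's. The paper proves \eqref{eqn:adm_var_g} by specializing its general coordinate formula \eqref{eqn:var_coord} to the anholonomic frame of left-invariant vector fields: in that frame the transition matrices are trivial and the bracket coefficients are the constant structure constants $\bra \alpha\beta\gamma$ of $\h$, so the last slot $\dot\xi^\alpha(t)+\bra \alpha\beta\gamma\,\eta^\beta(t)\xi^\gamma(t)$ drops out immediately, while the first two slots are read off from Proposition \ref{prop:prop_variation}; in other words, your one-sentence ``equivalent verification'' at the end \emph{is} the paper's entire proof. Your main route instead represents $\del_\xi\gamma$ by a generating homotopy (e.g.\ $h(t,s)=h(t)\exp(s\,\xi(t))$, which does generate it in the sense of Section \ref{sec:var_calc} since $\jet_s\big|_{s=0}h(t,s)=h(t)_\ast\xi(t)$) and extracts the third slot from the Maurer--Cartan equation. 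The sign bookkeeping you flag does close: with $\h$ carrying the bracket of left-invariant fields, as the paper assumes, the left Maurer--Cartan form satisfies $\dd\theta^L+\tfrac12[\theta^L,\theta^L]=0$, and evaluating the pullback on the commuting fields $\pa_t,\pa_s$ gives $\pa_t\xi-\pa_s\eta+[\eta,\xi]_\h=0$, hence $\pa_s\eta\big|_{s=0}=\dot\xi(t)+[\eta(t),\xi(t)]_\h$, exactly the third entry of \eqref{eqn:adm_var_g}. As for what each approach buys: the paper's proof is a two-line payoff of machinery already built (formula \eqref{eqn:var_coord}, itself derived from $\kappa_Q$), whereas your intrinsic derivation is coordinate-free, explains conceptually why the adjoint term appears, and would work even where no adapted frame has been introduced --- at the modest cost of invoking the Maurer--Cartan equation and of the homotopy bookkeeping, which you handle correctly (the first two slots $h(t)_\ast\xi(t)$ and $\eta(t)$ read off from $\pa_s h\big|_{s=0}$ and the base point agree with what the paper obtains via Proposition \ref{prop:prop_variation}).
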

\begin{proof} The justification of formula \eqref{eqn:adm_path_g} is straightforward: standard admissible curves in $\T H$ are simply the tangent lifts of base curves in $H$. Thus a curve $(h(t),\eta(t))$ corresponds to an admissible curve if and only if its image $h(t)_\ast\eta(t)\in\T H$ is the tangent lift of the base projection $h(t)$.  
\medskip

To prove \eqref{eqn:adm_var_g} consider an admissible curve $(h(t),\eta(t))\in H\times\h$ and a generator $(h(t),\xi(t))\in H\times\h$ of the admissible variation $\del_{\xi(t)}\eta(t)\in \T\T H$. By Proposition~\ref{prop:prop_variation} this variation projects to the admissible curve under $\tau_{\T H}$  and to the generator under $\T\tau_{H}$. This explains the first two entries in the triple \eqref{eqn:adm_var_g}. 

To justify the last entry choose a {\change left-invariant frame} $\{e_\alpha\}$ on $H$. Clearly, in the induced coordinates $(h^\alpha, a^\beta)$ on $\T H$ adapted to this frame, the projection to the $\h$-factor in $\T H\approx H\times\h$ reads simply $(h^\alpha, a^\beta)\mapsto a^\beta e_\beta\in \h$.  Moreover, the Lie bracket of left-invariant vector fields $[e_\beta,e_\gamma]=e_\alpha\bra \alpha\beta\gamma$ has constant coefficients $\bra \alpha\beta\gamma$ being the constants of the Lie algebra $\h$. 

Now for $\eta(t)=\eta^\alpha(t) e_\alpha$ and $\xi(t)=\xi^\alpha(t) e_\alpha$, formula \eqref{eqn:var_coord} shows that, in the induced coordinates on $\T\T H$, the $\dot a^\alpha$-entry in the coordinate formula for the admissible variation reads as $\dot \xi^\alpha(t)+\bra \alpha \beta\gamma \eta^\beta(t)\xi^\gamma(t)$. This corresponds precisely to an element $\dot \xi(t)+[\eta(t),\xi(t)]_{\h}$ in the induced projection of $\T\T H\approx \T H\times\h\times\h$ to its second $\h$-factor. 
\end{proof}

Observe that for a given $\eta(t)\in\h$ and an initial point $h(0)=h_0\in H$ equation \eqref{eqn:adm_path_g} determines a unique solution $h(t)$. Therefore,  with some abuse of notation, we shall sometimes refer to $\eta(t)$ itself as to an admissible trajectory (keeping a fixed initial point $h_0$ in mind). Similarly, we shall denote by $\del_{\xi(t)}\eta(t)$ the admissible variation of the form \eqref{eqn:adm_var_g}. 

In the remaining part of this section we shall compare the nonholonomically and vakonomically constrained dynamics associated with a Lagrangian function $L:\T H\approx H\times\h\ra\R$ and a left-invariant distribution $D\subset\T H$. It is easy to see, that such a distribution, in the canonical trivialization $\T H=H\times\h$, corresponds to $H\times d$, where $d\subset\h$ is a linear subspace. We shall denote the restricted variational principles associated with $D$ by $\PP^{nh}_d=(L,\TT_d,\WW^{nh}_d)$ and  $\PP^{vak}_d=(L,\TT_d,\WW^{vak}_d)$. Clearly, an admissible trajectory $\eta(t)\in\h$ belongs to $\TT_d$ if and only if $\eta(t)\in d$ for every $t\in[t_0,t_1]$. 

Consider now any splitting $\h=d\oplus d'$ into the direct sum of linear subspaces. Denote by $P:\h\ra d$ and $P':\h\ra d'$ the canonical projections of $\h$ into the factors of this splitting. Note that every generator $\xi(t)\in \h$ of an admissible variation $\del_{\xi(t)} \eta(t)$ can be decomposed as $\xi(t)=a(t)+b(t)$, where $a(t)=P(\xi(t))\in d$ and $b(t)=P'(\xi(t))\in d'$.
Decomposing the $\g$-part $\dot \xi(t)+[\eta(t),\xi(t)]_\h$ of an admissible variation $\del_{\xi(t)}\eta(t)$ into $d$ and $d'$-components allows to characterize vakonomic admissible variations among all admissible variations. 
 
\begin{prop}\label{prop:Lie_adm_var} In the above setting, an admissible variation $\del_{\xi(t)}\eta(t)$ along an admissible trajectory $\eta(t)\in d$ generated by $\xi(t)=a(t)+b(t)$ belongs to $\WW^{vak}_d$ if and only if
\begin{equation}\label{eqn:Lie_vak_var}
\dot b(t)+P'[\eta(t),a(t)]_\h+P'[\eta(t),b(t)]_\h=0.
\end{equation}

Every nonholonomic admissible variation along $\eta(t)\in d$ is generated by $\xi(t)=a(t)\in d$ and thus the decomposition of its $\h$-part into $d$ and $d'$-components reads as
$$\dot a(t)+[\eta(t),a(t)]_\h=\left(\dot a(t)+P[\eta(t),a(t)]_\h\right)+\left(P'[\eta(t),a(t)]_\h\right)\in d\oplus d'\ .$$
\end{prop}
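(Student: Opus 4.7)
The plan is to reduce the assertion to a direct decomposition of the $\h$-component of the admissible variation \eqref{eqn:adm_var_g} with respect to the splitting $\h = d\oplus d'$. The key observation is that in the left trivialization $\T H\approx H\times \h$, the left-invariant distribution $D$ corresponds to $H\times d\subset H\times \h$, and hence its tangent bundle, viewed inside $\T\T H\approx \T H\times\h\times\h$, reads $\T D\approx \T H\times d\times d$. (Indeed, $\T(H\times d) = \T H\times \T d$, and since $d\subset \h$ is a linear subspace, $\T d\approx d\times d$.)

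First I would address the vakonomic part. By definition $\del_{\xi(t)}\eta(t)\in \WW^{vak}_d$ if and only if the variation is tangent to $D$. Using Proposition \ref{prop:Lie_triv_var}, the variation corresponds to the triple $(h(t)_\ast \xi(t),\eta(t),\dot\xi(t)+[\eta(t),\xi(t)]_\h)$. The first component and the base-point entry $\eta(t)$ automatically lie in $\T H\times d$ by assumption. Hence the membership in $\T D\approx \T H\times d\times d$ reduces to the single condition that the $\h$-tangent entry $\dot\xi(t)+[\eta(t),\xi(t)]_\h$ lies in $d$, which is equivalent to the vanishing of its $P'$-projection. Splitting $\xi(t)=a(t)+b(t)$ with $a(t)\in d$ and $b(t)\in d'$ and noting that $P'\dot a(t)=0$ (as $d,d'$ do not depend on $t$) one immediately gets \eqref{eqn:Lie_vak_var}.

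For the nonholonomic part, I would invoke d'Alembert's principle as recalled in Section \ref{sec:var_calc}: the generator $\xi(t)$ must take values in $D_{h(t)}$. Under the trivialization this means $\xi(t)\in d$, so $b(t)=0$ and $\xi(t)=a(t)$. Substituting $\xi=a$ into $\dot\xi+[\eta,\xi]_\h$ and splitting with respect to $\h=d\oplus d'$ yields the asserted formula, with $d$-part $\dot a(t)+P[\eta(t),a(t)]_\h$ and $d'$-part $P'[\eta(t),a(t)]_\h$ (again using that $\dot a(t)\in d$).

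The proof is essentially a coordinate-free bookkeeping argument, so there is no real obstacle beyond keeping the different identifications straight (in particular, not confusing the base-point components of $\T\T H\approx \T H\times\h\times\h$ with the tangent components, and exploiting that the splitting $\h=d\oplus d'$ is time-independent so the projections commute with $\tfrac{\dd}{\dd t}$). The whole argument should fit in a short paragraph once these identifications are made explicit.
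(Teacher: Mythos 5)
Your argument is correct and coincides with the justification the paper intends: Proposition \ref{prop:Lie_adm_var} is stated there without proof, as an immediate consequence of Proposition \ref{prop:Lie_triv_var} together with the observations that $\T D\approx\T H\times d\times d$ in the left trivialization (so vakonomic tangency reduces to $P'\bigl(\dot\xi(t)+[\eta(t),\xi(t)]_\h\bigr)=0$) and that d'Alembert's principle forces the generator to lie in $d$. Your bookkeeping, including the point that $\dot b(t)\in d'$ because the splitting $\h=d\oplus d'$ is time-independent, is exactly the intended reasoning, so nothing is missing.
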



\paragraph{The comparison problem.}

In this subsection we shall present solutions of the comparison problems \eqref{main_question_1} and \eqref{main_question_2} for systems introduced above. We also answer the question when a nonholonomic extremal is an unconstrained one. In general we follow the line sketched in the previous Section~\ref{sec:chaplygin}, but now the splitting $\h=d\oplus d'$ will play the role of the splitting $\T Q=\HH Q\oplus_Q\V Q$. 

Before formulating our main result note that, due to the canonical decomposition $\T H\approx H\times\h$, we can treat the Lagrangian $L:\T H\ra\R$ as defined on the product $H\times\h$. Therefore we can differentiate $L(h,\eta)$ with respect to $h$ and $\eta$ separately. Now these differentials allow to express nicely the differential of $L$ in the direction of an admissible variation $\del_\xi \eta$:
\begin{equation}\label{eqn:Lie_diff_lagr}
\begin{split}
\<\dd L,\del_{\xi(t)} \eta(t)>\overset{\eqref{eqn:adm_var_g}}=&\<\frac{\pa L}{\pa h},h(t)_\ast\xi(t)>+\<\frac{\pa L}{\pa \eta},\dot\xi(t)+[\eta(t),\xi(t)]_\h>=\\
&\<h(t)^\ast\left(\frac{\pa L}{\pa h}\right)-\frac{\dd}{\dd t}\left(\frac{\pa L}{\pa \eta}\right)+\ad_{\eta(t)}^\ast\left(\frac{\pa L}{\pa \eta}\right),\xi(t)>+\frac{\dd}{\dd t}\<\frac{\pa L}{\pa \eta},\xi(t)>.
\end{split}
\end{equation}  
Here for any $\phi\in\h^\ast$ and any $\eta\in\h$, $\<\ad_\eta^\ast\phi,\cdot>=\<\phi,[\eta,\cdot]_\h>$. The relation of the differentials $h^\ast \frac{\pa L}{\pa h}$ and $\frac{\pa L}{\pa \eta}$ with the differentials $\BL$ and $\FL$ introduced in the previous Section~\ref{sec:chaplygin} will be explained in Remark~\ref{rem:Chapl_as_Lie}. 

Now we are ready to state the main result of this section.

\begin{thm}\label{thm:Lie} For the systems described above:
\begin{enumerate}[(a)]
\item \label{part:lie_nh_st} A nonholonomic extremal $(h(t),\eta(t))\in H\times d$ is an unconstrained extremal if and only if the covector
\begin{equation}\label{eqn:lie_nh_st}
h(t)^\ast\left(\frac{\pa L}{\pa h}\right)-\frac{\dd}{\dd t}\left(\frac{\pa L}{\pa \eta}\right)+\ad_{\eta(t)}^\ast\left(\frac{\pa L}{\pa \eta}\right)\in\h^\ast
\end{equation} 
for every $t\in[t_0,t_1]$ annihilates every $b\in d'\subset\h$. 

\item \label{part:lie_nh_vak} A nonholonomic extremal $(h(t),\eta(t))\in H\times d$ is a vakonomic extremal if and only if
\begin{equation}\label{eqn:lie_nh_vak}
\int_I\<h(t)^\ast\left(\frac{\pa L}{\pa h}\right)+\ad^\ast_{\eta(t)}P^\ast\left(\frac{\pa L}{\pa \eta}\right),b(t)>\dd t=\int_I\<\ad^\ast_{\eta(t)}(P')^\ast\left(\frac{\pa L}{\pa \eta}\right),a(t)> \dd t
\end{equation}
for every pair $a(t)\in d$ and $b(t)\in d'$ vanishing at the end-points and related by equation \eqref{eqn:Lie_vak_var}. 

\item \label{part:lie_vak_nh} A vakonomic extremal $(h(t),\eta(t))\in H\times d$ corresponding to a multiplier $\lambda(t)=(P')^\ast\lambda(t)\in \Ann(d)\approx (d')^\ast\subset\h^\ast$ is a nonholonomic extremal if and only if
\begin{equation}\label{eqn:lie_vak_nh}
\<\lambda(t),P'[\eta(t),a]_\h>=0
\end{equation}
for every $t\in[t_0,t_1]$ and every $a\in d$. 
\end{enumerate}
\end{thm}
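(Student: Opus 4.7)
The strategy mirrors the proof of Theorem \ref{thm:chap_syst}, with the algebraic splitting $\h = d \oplus d'$ playing the role of the geometric splitting $\T Q = \HH Q \oplus_Q \V Q$. The main engine is formula \eqref{eqn:Lie_diff_lagr} for the infinitesimal change of the Lagrangian along an admissible variation, combined with the linearity $\del_{a+b}\eta = \del_a \eta + \del_b \eta$ from Proposition \ref{prop:prop_variation}. By Proposition \ref{prop:Lie_adm_var} a nonholonomic variation along $\eta(t) \in d$ is exactly one generated by some $a(t) \in d$, so a nonholonomic extremal satisfies $\<\dd S_L, \del_a \eta> = 0$ for every such $a$ vanishing at the end-points. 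In each of the three parts I will decompose an arbitrary generator as $\xi = a + b$ with $a \in d$, $b \in d'$, discard the automatically vanishing $a$-contribution, and analyze what remains.

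For part \eqref{part:lie_nh_st}, $\eta$ is an unconstrained extremal if and only if $\<\dd S_L, \del_b \eta> = 0$ for every $b(t) \in d'$ vanishing at the end-points. Inserting $\xi = b$ into \eqref{eqn:Lie_diff_lagr} and integrating in time kills the boundary term $\frac{\dd}{\dd t}\<\pa L/\pa \eta, b>$, and the fundamental lemma of variational calculus converts the vanishing of the remaining integral into the pointwise annihilation of the covector in \eqref{eqn:lie_nh_st} on $d'$.

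For part \eqref{part:lie_nh_vak}, a vakonomic generator $\xi = a + b$ has $b$ coupled to $a$ by \eqref{eqn:Lie_vak_var}, which I rewrite as $\dot b + [\eta, b]_\h = P[\eta, b]_\h - P'[\eta, a]_\h$. Plugging $\xi = b$ into \eqref{eqn:Lie_diff_lagr}, using this identity, and moving the projections $P, P'$ to their adjoints $P^*$ and $(P')^*$, the condition $\int_I \<\dd L, \del_b \eta> \dd t = 0$ rearranges precisely into \eqref{eqn:lie_nh_vak}. One small technicality to verify is that both $a$ and $b$ vanish at the end-points; this is automatic from the directness of the sum $\h = d \oplus d'$ and the vanishing of $\xi = a + b$ there.

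For part \eqref{part:lie_vak_nh}, the constraint $\eta \in d$ is just $P'\eta = 0$, so the appropriate modified Lagrangian is $\wt L(h, \eta, t) = L(h, \eta) - \<(P')^*\lambda(t), \eta>$ with $(P')^*\lambda(t) \in \Ann(d)$. By definition a vakonomic extremal is an unconstrained extremal of $\wt L$, hence $\<\dd S_{\wt L}, \del_a \eta> = 0$ for every $a(t) \in d$ vanishing at the end-points. The $\lambda$-contribution to this integral evaluates to $-\int_I \<(P')^*\lambda, \dot a + [\eta, a]_\h> \dd t$; because $d$ is a linear subspace we have $\dot a \in d$, which is annihilated by $(P')^*\lambda \in \Ann(d)$, so only $\int_I \<\lambda, P'[\eta, a]_\h> \dd t$ survives. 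Equating with the nonholonomic condition $\<\dd S_L, \del_a \eta> = 0$ and applying the fundamental lemma yields \eqref{eqn:lie_vak_nh}. I expect the main obstacle throughout to be bookkeeping rather than substance: tracking the decomposition $\xi = a + b$, the two adjoints $P^*$ and $(P')^*$, and the placement of $\ad^*_{\eta}$ on each side of \eqref{eqn:lie_nh_vak}. Once these are carefully handled, each part reduces to a single application of \eqref{eqn:Lie_diff_lagr} followed by the fundamental lemma.
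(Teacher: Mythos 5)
Your proposal is correct and takes essentially the same route as the paper's proof: in each part you decompose the generator as $\xi=a+b$ along $\h=d\oplus d'$, discard the $a$-contribution via nonholonomic extremality, and reduce the surviving term through formula \eqref{eqn:Lie_diff_lagr}, the constraint \eqref{eqn:Lie_vak_var} rewritten as $\dot b+[\eta,b]_\h=P[\eta,b]_\h-P'[\eta,a]_\h$, and the fundamental lemma. The technical points you flag (that $a$ and $b$ vanish separately at the end-points by directness of the sum, and that $P'\dot a=0$ since $a(t)\in d$ so only $\<\lambda,P'[\eta,a]_\h>$ survives in part \eqref{part:lie_vak_nh}) are exactly the ones the paper's argument relies on.
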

\begin{proof} We proceed analogously to the proof of Theorem~\ref{thm:chap_syst}. To prove \eqref{part:lie_nh_st} consider a nonholonomic extremal $(h(t),\eta(t))\in H\times d$ and consider any generator $\xi(t)=a(t)+b(t)$ of the standard admissible variation $\del_{\xi(t)} \eta(t)$ vanishing at the end-points. From \eqref{eqn:var_linear} we have
$$\<\dd S_L,\del_\xi \eta>=\<\dd S_L,\del_a\eta>+\<\dd S_L,\del_{b}\eta>.$$
Now $\<\dd S_L,\del_a\eta>$ vanishes since $\eta(t)$ is a nonholonomic extremal and $\del_a \eta$ is a nonholonomic admissible variation vanishing at the end-points. Clearly $\eta(t)$ is an unconstrained variation if and only if $\<\dd S_L,\del_{b}\eta>$ vanishes for every $b(t)\in d'$ vanishing at the end-points. In light of \eqref{eqn:Lie_diff_lagr} we see that $\<\dd S_L,\del_{b}\eta>$ vanishes if and only if 
$$\int_I\<h(t)^\ast\left(\frac{\pa L}{\pa h}\right)-\frac{\dd}{\dd t}\left(\frac{\pa L}{\pa \eta}\right)+\ad_{\eta(t)}^\ast\left(\frac{\pa L}{\pa \eta}\right),b(t)>\dd t=0.$$
By the standard reasoning we get condition \eqref{eqn:lie_nh_st}.  
\medskip

To prove \eqref{part:lie_nh_vak}, take $(h(t),\eta(t))$ as above and consider a vakonomic admissible variation $\del_{\xi(t)}\eta(t)$ generated by $\xi(t)=a(t)+b(t)$ vanishing at the end-points. From Proposition~\ref{prop:Lie_adm_var} we conclude that curves $a(t)$ and $b(t)$ are related by equation \eqref{eqn:Lie_vak_var}. By assumption that $\eta(t)$ is a nonholonomic extremal, again $\<\dd S_L,\del_a\eta>=0$ and thus $\eta(t)$ is a vakonomic extremal if and only if $\<\dd S_L,\del_{b}\eta>=0$ for every such $b(t)$. Now we can write
\begin{align*}
\<\dd L,\del_{b(t)}\eta(t)>\overset{\eqref{eqn:Lie_diff_lagr}}=&\<\frac{\pa L}{\pa h},h(t)_\ast b(t)>+\<\frac{\pa L}{\pa \eta},\dot b(t)+[\eta(t),b(t)]_\h>\overset{\eqref{eqn:Lie_vak_var}}=\\
&\<h(t)^\ast\left(\frac{\pa L}{\pa h}\right),b(t)>+\<\frac{\pa L}{\pa \eta},P[\eta(t),b(t)]_\h-P'[\eta(t),a(t)]_\h>=\\
&\<h(t)^\ast\left(\frac{\pa L}{\pa \eta}\right)+\ad_{\eta(t)}^\ast P^\ast\left(\frac{\pa L}{\pa \eta}\right),b(t)>-\<\ad_{\eta(t)}^\ast(P')^\ast\left(\frac{\pa L}{\pa \eta}\right),a(t)>.
\end{align*}
Integrating the above equation over $I=[t_0,t_1]$ we get condition \eqref{eqn:lie_nh_vak}. 
\medskip 

To prove \eqref{part:lie_vak_nh}, observe first that the constraints distribution is characterized in $\h$ by the equation $P'(\eta)=0$. Thus the general form of the modified vakonomic Lagrangian is
$$\wt L(h,\eta,t)=L(h,\eta)-\<\lambda(t),P'(\eta)>,$$
where $\lambda(t)\in\h^\ast$. Clearly, since the additional factor in the Lagrangian vanishes for every $\eta\in d$, we can take $\lambda(t)=(P')^\ast \lambda(t)$, thus restricting our attention to $\lambda(t)\in\Ann(d)\approx(d')^\ast$.  

Now take a vakonomic extremal $(h(t),\eta(t))\in H\times d$ associated with such a $\lambda(t)$ and consider any nonholonomic admissible variation $\del_{a(t)}\eta(t)$ with vanishing end-points. Now the second part of Proposition~\ref{prop:Lie_adm_var} implies that
$$\langle\dd \wt L,\del_{a(t)}\eta(t)\rangle=\<\dd L,\del_{a(t)}\eta(t)>-\<\lambda(t),P'[\eta(t),a(t)]_\h>.$$ 
Integrating the above equality over $I$, and taking into account that $\<\dd S_{\wt L},\del_a \eta>=0$ since $\eta(t)$ is an unconstrained extremal of $\wt L$, we get
$$\<\dd S_{ L},\del_a \eta>=\int_I\<\lambda(t),P'[\eta(t),a(t)]_\h>.$$
Clearly $\<\dd S_{L},\del_a \eta>=0$ if and only if condition \eqref{eqn:lie_vak_nh} holds. 
\end{proof}

\paragraph{\change Determining the vakonomic multiplier.}
 Similarly to the Chaplygin case, in the setting considered in this section, we can deduce the equation defining the vakonomic multiplier. Moreover, due to a simple structure of $\T H$, we are able to derive the nonholonomic equations of motion. 

\begin{lem}\label{lem:lie_eqn} An admissible curve $(h(t),\eta(t))\in H\times d$ is:
\begin{itemize}
\item a nonholonomic extremal if and only if the covector \eqref{eqn:lie_nh_st} for every $t\in[t_0,t_1]$ annihilates every $a\in d\subset\h$:
\begin{equation}\label{eqn:lie_nh}
h(t)^\ast\left(\frac{\pa L}{\pa h}\right)-\frac{\dd}{\dd t}\left(\frac{\pa L}{\pa \eta}\right)+\ad_{\eta(t)}^\ast\left(\frac{\pa L}{\pa \eta}\right)\in\Ann(d)\subset\h^\ast.
\end{equation} 
\item a vakonomic extremal associated with a modified Lagrangian $\wt L(h,\eta,t)=L(h,\eta)-\<\lambda(t),P'(\eta)>$ (where $\lambda(t)\in (d')^\ast\approx\Ann(d)\subset\h^\ast$) if and only if 
\begin{align}
\label{eqn:lie_vak_1}
&\<h(t)^\ast\left(\frac{\pa L}{\pa h}\right)-\frac{\dd}{\dd t}\left(\frac{\pa L}{\pa \eta}\right)+\ad_{\eta(t)}^\ast\left(\frac{\pa L}{\pa\eta}-\lambda(t)\right),a>=0
\intertext{for every $t\in[t_0,t_1]$ and every $a\in d$ , and} 
\label{eqn:lie_vak_lambda}
&\<h(t)^\ast\left(\frac{\pa L}{\pa h}\right)-\frac{\dd}{\dd t}\left(\frac{\pa L}{\pa \eta}-\lambda(t)\right)+\ad_{\eta(t)}^\ast\left(\frac{\pa L}{\pa \eta}-\lambda(t)\right),b>=0
\end{align}
for every $b\in d'\subset\h$. 
\end{itemize}
\end{lem}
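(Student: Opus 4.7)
The plan is to adapt the variational argument from the proof of Theorem \ref{thm:Lie}, working directly with formula \eqref{eqn:Lie_diff_lagr} and testing the resulting Euler--Lagrange covector against the appropriate class of generators.

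For the nonholonomic characterization, fix an admissible curve $(h(t),\eta(t))\in H\times d$. By Proposition \ref{prop:Lie_adm_var}, every nonholonomic admissible variation along $\eta(t)$ is of the form $\del_{a(t)}\eta(t)$ for some $a(t)\in d$. I would apply \eqref{eqn:Lie_diff_lagr} with $\xi=a$ and integrate over $I$; for $a(t)$ vanishing at the end-points the boundary term $\frac{\dd}{\dd t}\<\pa L/\pa\eta,a\>$ integrates to zero, so the extremality condition $\<\dd S_L,\del_a\eta\>=0$ for all such $a$ reduces, via the standard fundamental-lemma argument, to the demand that the covector of \eqref{eqn:lie_nh_st} pair to zero with every $a\in d$ pointwise in $t$. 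This is exactly \eqref{eqn:lie_nh}.

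For the vakonomic characterization, by construction, a vakonomic extremal associated with a multiplier $\lambda(t)$ is an unconstrained extremal of $\wt L=L-\<\lambda(t),P'(\eta)\>$; equivalently, after identifying $(d')^\ast$ with $\Ann(d)\subset\h^\ast$ via $(P')^\ast$, I can rewrite this as $\wt L=L-\<\lambda(t),\eta\>$ with $\lambda(t)\in\Ann(d)$. Then $\pa\wt L/\pa h=\pa L/\pa h$ and $\pa\wt L/\pa\eta=\pa L/\pa\eta-\lambda(t)$, so formula \eqref{eqn:Lie_diff_lagr} applied to $\wt L$ converts the vanishing of $\<\dd S_{\wt L},\del_\xi\eta\>$ for every $\xi\in\h$ vanishing at the end-points into the pointwise condition
\[
\left\langle h(t)^\ast\!\left(\tfrac{\pa L}{\pa h}\right)-\tfrac{\dd}{\dd t}\!\left(\tfrac{\pa L}{\pa\eta}-\lambda(t)\right)+\ad^\ast_{\eta(t)}\!\left(\tfrac{\pa L}{\pa\eta}-\lambda(t)\right),\,\xi\right\rangle=0.
\]
Splitting the generator as $\xi=a+b$ with $a\in d$, $b\in d'$, and testing each summand separately, the $b$-equation is literally \eqref{eqn:lie_vak_lambda}.

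The main obstacle---and the only non-routine point---is matching the $a$-equation to \eqref{eqn:lie_vak_1}, which differs from what one reads off directly by an extra term $\<\tfrac{\dd\lambda}{\dd t},a\>$. The resolution is the observation that $\<\lambda(t),a(t)\>\equiv 0$, since $\lambda(t)\in\Ann(d)$ and $a(t)\in d$. Differentiating this identity yields $\<\dot\lambda,a\>+\<\lambda,\dot a\>=0$, and because $d$ is a fixed linear subspace of $\h$, the derivative $\dot a(t)$ also lies in $d$, so $\<\lambda,\dot a\>=0$ and hence $\<\dot\lambda,a\>=0$. The $\dot\lambda$ contribution therefore drops out harmlessly, leaving precisely \eqref{eqn:lie_vak_1}. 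Once this simple fact about time-differentiation preserving a fixed subspace is recognized, both vakonomic equations fall out of \eqref{eqn:Lie_diff_lagr} mechanically.
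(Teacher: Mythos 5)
Your proposal is correct and takes essentially the same route as the paper: both arguments rest on formula \eqref{eqn:Lie_diff_lagr} together with the splitting $\xi(t)=a(t)+b(t)$ of the generator along $\h=d\oplus d'$, and your treatment of the nonholonomic case and of equation \eqref{eqn:lie_vak_lambda} matches the paper's essentially verbatim. The only (cosmetic) divergence is in deriving \eqref{eqn:lie_vak_1}: the paper never sees a $\<\dot\lambda(t),a>$ term because it kills the multiplier contribution before integrating by parts (using $P'\dot a=0$ and recycling the computation from the proof of Theorem \ref{thm:Lie} \eqref{part:lie_vak_nh}), whereas you integrate by parts first and then discard $\<\dot\lambda(t),a>$ via $\dot\lambda(t)\in\Ann(d)$ --- the same cancellation performed in mirror image, and your justification of it is sound.
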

\begin{proof} The characterization of the nonholonomic extremals follows directly from formula \eqref{eqn:Lie_diff_lagr} taken for $\xi(t)=a(t)\in d\subset \h$.
\medskip

Now, by the linearity of the variation with respect to the generator \eqref{eqn:var_linear}, $(h(t),\eta(t))$ is a vakonomic extremal associated with $\wt L$ if and only if $\<\dd S_{\wt L},\del_{a}\eta>=0$ and $\<\dd S_{\wt L},\del_{b}\eta>=0$ for all generators $a(t)\in d$ and $b(t)\in d'$ vanishing at the end-points. It follows from the proof of Theorem~\ref{thm:Lie} \eqref{part:lie_vak_nh} that the condition $\<\dd S_{\wt L},\del_{a}\eta>=0$ is equivalent to 
$$\<\dd S_L,\del_a \eta>=\int_I\<\lambda(t),P'[\eta(t),a(t)]_\h>\dd t.$$
Now using formula \eqref{eqn:Lie_diff_lagr} and the restriction $\lambda(t)=(P')^\ast\lambda(t)$, we transform the above equality into  
$$\int_I\<h(t)^\ast\left(\frac{\pa L}{\pa h}\right)-\frac{\dd}{\dd t}\left(\frac{\pa L}{\pa \eta}\right)+\ad_{\eta(t)}^\ast\left(\frac{\pa L}{\pa\eta}\right),a(t)>\dd t=\int_I\<\ad_{\eta(t)}^\ast\lambda(t),a(t)>\dd t.$$
The integrands are equal for every $a(t)\in d$ vanishing at the end-points if and only if \eqref{eqn:lie_vak_1} holds. 
\medskip

To justify \eqref{eqn:lie_vak_lambda} let us calculate,
\begin{align*}
\<\dd \wt L,\del_{b(t)}\eta(t)>\overset{\eqref{eqn:adm_var_g}}=&\<\dd L,\del_{b(t)}\eta(t)>-\<\lambda(t),P'\left(\dot b(t)+[\eta(t),b(t)]_\h\right)>=\\
&\<\dd L,\del_{b(t)}\eta(t)>-\<\lambda(t),\dot b(t)+P'[\eta(t),b(t)]_\h>\overset{\eqref{eqn:Lie_diff_lagr}}=\\
&\<h(t)^\ast\left(\frac{\pa L}{\pa h}\right)-\frac{\dd}{\dd t}\left(\frac{\pa L}{\pa \eta}-\lambda(t)\right)+\ad_{\eta(t)}^\ast\left(\frac{\pa L}{\pa \eta}-\lambda(t)\right),b(t)>+\frac{\dd}{\dd t}\<\frac{\pa L}{\pa \eta}-\lambda(t),b(t)>.
\end{align*}  
In the above calculation we used the fact that $\lambda(t)=(P')^\ast\lambda(t)$. Integrating the above equality over $I$ one gets that  $\<\dd S_{\wt L},\del_{b}\eta>=0$ if and only if
$$\int_I\<h(t)^\ast\left(\frac{\pa L}{\pa h}\right)-\frac{\dd}{\dd t}\left(\frac{\pa L}{\pa \eta}-\lambda(t)\right)+\ad_{\eta(t)}^\ast\left(\frac{\pa L}{\pa \eta}-\lambda(t)\right),b(t)>\dd t=0$$
for every $b(t)\in d'\subset\g$ vanishing at the end-points. By the standard reasoning this is equivalent to condition \eqref{eqn:lie_vak_lambda}.  
\end{proof}


\paragraph{\change The role of the splitting $\h=d\oplus d'$.}
Let us now discuss some aspects of our results from the previous subsection. The first matter is the role of the choice of the completing factor $d'\subset\h$. Obviously our characterizations are "if and only if", which suggests that they should not depend on this choice (which, let us remind, was arbitrary). This is indeed the case as we explain below in detail.

\begin{remark}\label{rem:d'_arbitrary}
Consider two splittings $\h=d\oplus {d'}_1$ and $\h=d\oplus {d'}_2$ with corresponding projections $P_1:\h\ra d$, ${P'}_1:\h\ra {d'}_1$ and $P_2:\h\ra d$, ${P'}_2:\h\ra {d'}_2$, respectively. Define $\Delta P:=P_1-P_2:\h\ra d$. Observe that $d\subset\ker \Delta P$  and that $P_2'=P_1'+\Delta P$. In other words the linear $d$-valued map $\Delta P$ describes the passage between both splittings. Our goal now is to show that all $d'$-dependent conditions from our previous considerations are preserved under the substitution of an element $b\in d'_1$ by an element $b+\Delta P(b)\in d_2'$, $P_1$ by $P_2$, etc.

For notation simplicity denote by $\psi(t)$ the covector \eqref{eqn:lie_nh_st} for a given admissible trajectory $\eta(t)$. Clearly, the nonholonomic  equation \eqref{eqn:lie_nh} reads simply $\<\psi(t),a>=0$ for any $a\in d$. Similarly, the vakonomic equation \eqref{eqn:lie_vak_1} reads as $\<\psi(t),a>- \<\lambda(t),[\eta(t),a]_\h>=0$ for every $a\in d$.  

\begin{itemize}
\item Condition \eqref{part:lie_nh_st} from Theorem~\ref{thm:Lie} reads as $\<\phi(t),b>=0$ for every $b\in d'$. Now for $b_2=b_1+\Delta P(b_1)$
$$\<\phi(t),b_2>=\<\phi(t),b_1>+\<\phi(t),\Delta P(b_1)>.$$
By definition $\Delta P(b_1)\in d$, hence if $\eta(t)$ is a nonholonomic trajectory then $\<\phi(t),\Delta P(b_1)>=0$, and thus
$$\<\phi(t),b_2>=\<\phi(t),b_1>.$$

\item Equation \eqref{eqn:Lie_vak_var} for a generator of a vakonomic variation $\xi=a+b$ is, in fact, the equation $P'\left(\dot\xi(t)+[\eta(t),\xi(t)]\right)=0$ or, equivalently, $\dot\xi(t)+[\eta(t),\xi(t)]\in d$. Clearly the latter is independent of the choice of splitting. 

To see it differently, if $P_1'\left(\dot\xi(t)+[\eta(t),\xi(t)]\right)=0$ then also $P_2'\left(\dot\xi(t)+[\eta(t),\xi(t)]\right)=0$ since both differ by $\Delta P\left(\dot\xi(t)+[\eta(t),\xi(t)]\right)=0$ as $\dot\xi(t)+[\eta(t),\xi(t)]\in d$. 

\item The above point will be helpful in showing the splitting-independence of condition \eqref{part:lie_nh_vak} from Theorem~\ref{thm:Lie}. Namely, the above considerations guarantee that if $(a_1,b_1)$, the $d\oplus d_1'$-factors of a generator $\xi$ 
satisfy \eqref{part:lie_nh_vak} with $P'=P_1'$, then $(a_2=a_1-\Delta P(\xi),b_2=b_1+\Delta P(\xi))$ - its $d\oplus d_1'$-factors -satisfy \eqref{part:lie_nh_vak} with $P'=P_2'$. Note also that both pairs simultaneously vanish at the end-points. Now
\begin{align*}
I_2:=&\<h^\ast\left(\frac{\pa L}{\pa h}\right),b_2>+\<\frac{\pa L}{\pa \eta},P_2[\eta,b_2]>-\<\frac{\pa L}{\pa \eta},P_2'[\eta,a_2]>=\\
&\<h^\ast\left(\frac{\pa L}{\pa h}\right),b_1+\Delta P(\xi)>+\\
&\<\frac{\pa L}{\pa \eta},(P_1-\Delta P)[\eta,b_1+\Delta P(\xi)]>-\<\frac{\pa L}{\pa \eta},(P_1'+\Delta P)[\eta,a_1-\Delta P(\xi)]>=\\
&\left(\<h^\ast\left(\frac{\pa L}{\pa h}\right),b_1>+\<\frac{\pa L}{\pa \eta},P_1[\eta,b_1]>-\<\frac{\pa L}{\pa \eta},P_1'[\eta,a_1]>\right)+\\
&+\left(\<h^\ast\left(\frac{\pa L}{\pa h}\right),\Delta P(\xi)>+\<\frac{\pa L}{\pa \eta},[\eta,\Delta P(\xi)]>-\<\frac{\pa L}{\pa \eta},\Delta P[\eta,\xi]>\right)=:I_1+I_0.
\end{align*} 
Now, since $\Delta P(\dot\xi +[\eta,\xi])=0$ we have $\Delta P([\eta,\xi])=-\Delta P(\dot\xi)$, so  we can write
\begin{align*}
I_0=&\<h^\ast\left(\frac{\pa L}{\pa h}\right),\Delta P(\xi)>+\<\frac{\pa L}{\pa \eta},[\eta,\Delta P(\xi)]>+\<\frac{\pa L}{\pa \eta},\Delta P(\dot\xi)>=\\
&\left(\<h^\ast\left(\frac{\pa L}{\pa h}\right),\Delta P(\xi)>+\<\frac{\pa L}{\pa \eta},[\eta,\Delta P(\xi)]>-\<\frac{\dd}{\dd t}\left(\frac{\pa L}{\pa\eta}\right),\Delta P(\xi)>\right)+\\
&+\<\frac{\dd}{\dd t}\left(\frac{\pa L}{\pa\eta}\right),\Delta P(\xi)>+\<\frac{\pa L}{\pa \eta},\Delta P(\dot\xi)>=\<\psi(
t),\Delta P(\xi)>+\frac{\dd}{\dd t}\<\frac{\pa L}{\pa \eta},\Delta P(\xi)>.
\end{align*}
We see that if $\eta(t)$ is a nonholonomic trajectory and $\xi$ vanishes at the end-points then $\int_{I} I_0(t)\dd t=0$ and thus
$$\int_I I_1(t)\dd t=\int_I I_2(t)\dd t,$$
that is,  condition \eqref{part:lie_nh_vak} from Theorem~\ref{thm:Lie} does not depend on the choice of the splitting.

\item  Since we may assume that the vakonomic multiplier $\lambda(t)$ belongs to $\Ann(d)$, condition \eqref{eqn:lie_nh_vak} from Theorem~\ref{thm:Lie} can be rewritten in the splitting-independent form 
$$\<\lambda(t),[\eta(t),a]_\h>=0.$$

\item Finally, observe that the left hand side of \eqref{eqn:lie_vak_lambda} for $b_2=b_1+\Delta P(b_1)$ reads as
\begin{align*}
\<\psi(t),b_2>+\<\frac{\dd }{\dd t}\lambda(t), b_2>-\<\lambda(t),[\eta(t),b_2]>=
\left(\<\psi(t),b_1>+\<\frac{\dd }{\dd t}\lambda(t), b_1>-\<\lambda(t),[\eta(t),b_1]>\right)+\\\left(\<\psi(t),\Delta P(b_1)>+
\<\frac{\dd }{\dd t}\lambda(t), \Delta P(b_1)>-\<\lambda(t),[\eta(t),\Delta P(b_1)]>\right)\ .
\end{align*}
\end{itemize}
We know that $\Delta P(b_1)\in d$, hence $\<\frac{\dd }{\dd t}\lambda(t), \Delta P(b_1)>=0$ as $\lambda(t)\in\Ann(d)$. 
Now if $\eta(t)$ satisfies \eqref{eqn:lie_vak_1}, then 
$\<\psi(t),\Delta P(b_1)>-\<\lambda(t),[\eta(t),\Delta P(b_1)]_\h>=0$. We conclude that 
$$\<\psi(t),b_2>+\<\frac{\dd }{\dd t}\lambda(t), b_2>-\<\lambda(t),[\eta(t),b_2]>=
\<\psi(t),b_1>+\<\frac{\dd }{\dd t}\lambda(t), b_1>-\<\lambda(t),[\eta(t),b_1]>,$$
i.e., equation \eqref{eqn:lie_vak_lambda} is splitting-independent. 
\end{remark}

\paragraph{\change Results for invariant Lagrangians.}
Now we shall discuss Theorem~\ref{thm:Lie} in the special case of systems with an invariant Lagrangian.

\begin{cor}[invariant Lagrangian] \label{cor:lie} Assume that the Lagrangian $L:H\times\h\ra\R$ is $H$-invariant. Then $\frac{\pa L}{\pa h}=0$ and thus:
\begin{itemize}
\item Condition \eqref{eqn:lie_nh_st} in Theorem~\ref{thm:Lie} reduces to 
$$\frac{\dd }{\dd t}\left(\frac{\pa L}{\pa \eta}\right)-\ad_{\eta(t)}^\ast\left(\frac{\pa L}{\pa \eta}\right)\in\Ann(d')\subset\h^\ast.$$ 
\item Condition \eqref{eqn:lie_nh_vak}  in Theorem~\ref{thm:Lie} reduces to 
$$\int_I\<\ad^\ast_{\eta(t)}P^\ast\left(\frac{\pa L}{\pa \eta}\right),b(t)>\dd t=\int_I\<\ad^\ast_{\eta(t)}(P')^\ast\left(\frac{\pa L}{\pa \eta}\right),a(t)> \dd t,$$
where $a(t)$ and $b(t)$ are related by \eqref{eqn:Lie_vak_var} and vanish at the end-points. 
\item The nonholonomic equation of motion \eqref{eqn:lie_nh} reduces to 
$$\frac{\dd }{\dd t}\left(\frac{\pa L}{\pa \eta}\right)-\ad_{\eta(t)}^\ast\left(\frac{\pa L}{\pa \eta}\right)\in\Ann(d)\subset\h^\ast.$$ 
\item Equation \eqref{eqn:lie_vak_lambda} defining the vakonomic multiplier reads as
$$\frac{\dd}{\dd t}\left(\frac{\pa L}{\pa \eta}-\lambda(t)\right)-\ad_{\eta(t)}^\ast\left(\frac{\pa L}{\pa \eta}-\lambda(t)\right)\in \Ann(d')\subset \h^\ast.$$
\end{itemize}  
\end{cor}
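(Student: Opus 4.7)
The proof is essentially a direct substitution: once we establish that $H$-invariance of $L$ forces $\frac{\partial L}{\partial h}\equiv 0$, each of the four listed simplifications follows immediately by feeding this vanishing into the corresponding statement from Theorem \ref{thm:Lie} or Lemma \ref{lem:lie_eqn}.

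The first and only nontrivial step is to justify that $\frac{\partial L}{\partial h} = 0$. Recall that in our setting $L$ is viewed as a function on $H \times \h$ via the left trivialization $\T H \approx H\times\h$, $(h,\eta)\mapsto h_\ast\eta$. The left $H$-action on $\T H$ is $X \mapsto (L_g)_\ast X$, and for $X = h_\ast\eta$ one has $(L_g)_\ast (h_\ast\eta) = (gh)_\ast\eta$. Hence $H$-invariance of $L$ amounts to $L(gh,\eta) = L(h,\eta)$ for every $g,h \in H$ and $\eta \in \h$, which precisely says that $L$ does not depend on the first argument, so $\frac{\partial L}{\partial h}=0$ and a fortiori $h(t)^\ast\!\left(\frac{\partial L}{\partial h}\right) = 0$ along every admissible trajectory.

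It remains to read off each bullet point. For the first bullet, Theorem \ref{thm:Lie}\eqref{part:lie_nh_st} requires the covector \eqref{eqn:lie_nh_st} to annihilate every $b\in d'$; dropping the $h^\ast(\partial L/\partial h)$ term and rewriting ``annihilates every $b\in d'$'' as ``belongs to $\Ann(d')$'' yields exactly the stated condition. For the second bullet, substitute $h(t)^\ast(\partial L/\partial h) = 0$ on the left-hand side of \eqref{eqn:lie_nh_vak}; the right-hand side is unchanged. For the third bullet, apply the same substitution to the nonholonomic equation \eqref{eqn:lie_nh} from Lemma \ref{lem:lie_eqn}, and translate ``annihilates every $a\in d$'' into ``lies in $\Ann(d)$''. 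For the fourth bullet, plug $h(t)^\ast(\partial L/\partial h) = 0$ into \eqref{eqn:lie_vak_lambda}; the condition that the resulting bracket vanishes for every $b\in d'$ is, again, the statement that the covector $\frac{\dd}{\dd t}(\frac{\partial L}{\partial \eta}-\lambda(t)) - \ad^\ast_{\eta(t)}(\frac{\partial L}{\partial \eta}-\lambda(t))$ belongs to $\Ann(d')$.

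No real obstacle arises: the whole argument is a one-line reduction, and the only point where a reader might hesitate is in checking that ``$H$-invariance of $L:\T H\to\R$'' indeed translates to independence on the group variable in the left trivialization. Everything else is purely mechanical substitution and the tautology $\<\psi,b\>=0 \text{ for all } b\in d' \iff \psi\in\Ann(d')$.
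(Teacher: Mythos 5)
Your proposal is correct and matches the paper's treatment exactly: the paper gives no separate proof, presenting the corollary as immediate substitution of $\frac{\pa L}{\pa h}=0$ into Theorem \ref{thm:Lie} and Lemma \ref{lem:lie_eqn}, which is precisely your reduction, and your explicit check that left-invariance in the trivialization $(h,\eta)\mapsto h_\ast\eta$ forces $L(gh,\eta)=L(h,\eta)$ supplies the one step the paper leaves implicit. The only cosmetic point you gloss over is that the covector in \eqref{eqn:lie_nh_st} appears with the opposite overall sign from the corollary's displays, which is harmless since $\Ann(d')$ and $\Ann(d)$ are linear subspaces.
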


\paragraph{\change Relation with Chaplygin systems.} 
It is also interesting to link our results on left-invariant systems on Lie groups with the results on Chaplygin systems from the previous Section~\ref{sec:chaplygin}. Establishing such a connection, however, requires a careful analysis of the relations between the left and right actions of Lie groups (note that in this section we consider left-invariant distributions, whereas in Section~\ref{sec:chaplygin} we dealt with the right action).    

\begin{remark}[System on Lie groups as (non-invariant) Chaplygin systems]\label{rem:Chapl_as_Lie}
Consider now a special case of a left-invariant system on a Lie group $H$ such that the completing linear subspace $d'\subset\h$ is, in fact, a Lie subalgebra $d'=\g\subset\h$ corresponding to a closed Lie subgroup $G\subset H$. 

Now we are in the setting of a generalized Chaplygin system on the principal $G$-bundle $Q=H$ over the homogeneous space $M=H/G$ of right quotients equipped with the canonical right $G$-action (see \cite{Kobayashi_Nomizu_1963}). The splitting $d\oplus\g=\h$ defines the canonical splitting of $\T Q$ into its horizontal and vertical parts: $\HH Q=H\times d\subset H\times\h\approx\T H$ and $\V Q=H\times \g\subset H\times\h\approx \T H$. The latter identification is precisely the canonical trivialization $\V Q\approx Q\times\g$ considered in the previous Section~\ref{sec:chaplygin}. Clearly, since $(R_g)_\ast\eta=(L_g)_\ast\ad_{g^{-1}}\eta$, the right-$G$-action in the trivialization $\T H\approx H\times\h$ reads as $(R_g)_\ast(h,\eta)=(hg,\ad_{g^{-1}}\eta)$. We clearly see that the horizontal distribution is $G$-invariant if and only if $\ad_G d\subset d$ (and thus, in particular $[\g,d]\subset d$). Consequently, unless the latter is satisfied we deal with a truly non-invariant Chaplygin system.

 It is not difficult to translate our considerations from Section~\ref{sec:chaplygin} into the Language of the present section. The canonical identification of the pointed fibre $(hG,h)$ with $(G,e)$ is given by $hg\mapsto g$, and thus the fundamental vector field associated with $b\in \g$ is just the left-invariant vector field $h\mapsto h_\ast b\approx (h,b)$. Given a vector $X\in T M$ and its horizontal lift $\wt X_h= h_\ast\eta\approx (h,\eta)$ at $h\in H$, we conclude that its lift to $hg\in H$ is $\wt X_{hg}\approx (hg,P\left(\ad_{g^{-1}}\eta\right))$. This follows directly from the fact that $\wt X_{qg}$ is the horizontal projection of $(R_q)_\ast \wt X_q$. This can be seen also from a different perspective. Following \cite{Kobayashi_Nomizu_1963} we can identify $\T M$ with the product bundle $H\times_G (\h/\g)$, where $G$ acts on $\h/\g$ by $\ad_{g^{-1}}$. Identifying $\h/\g$ with $d$ we get that $\T M\approx H\times_G d$, where the $G$ action is given by $P\circ \ad_{g^{-1}}$ (the fact that this is indeed a $G$-action follows from the fact that $\ad_G\g\subset\g=\ker P$).  

Now taking horizontal vectors $\wt X\approx(h,\eta) $ and $\wt Y\approx (h, a)$  at $q=h\in H=Q$ (here of course $\eta, a\in d$), and an element $b\in\g$ we easily get
\begin{subequations}
\begin{align}\label{eqn:lie_chapl_1}
\RR(q)(X,Y)=&P'[\eta,a]_\h\\
\B(q)(X,b)=&P'[\eta,b]_\h\\
\<\FL(\wt X),b>=&\<\frac{\pa L}{\pa \eta}, b>
\intertext{and}
\<\BL(\wt X),b>=&\<h^\ast\left(\frac{\pa L}{\pa h}\right)+\ad_{\eta}^\ast P^\ast\left(\frac{\pa L}{\pa \eta}\right),b>.\label{eqn:lie_chapl_4} 
\end{align}
\end{subequations}
The first three formulae follow easily from the respective definitions in Section~\ref{sec:chaplygin}. We will prove the last, not so obvious, formula. Recall that $\<\BL(\wt X),b>=\frac{\dd}{\dd t}\big|_{t=0}L(\wt X_{q\cdot g(t)})$, where $g(t)$ is the flow of $b$. Now, since $\wt X_{q\cdot g(t)}=P(\ad_{g(t)^{-1}}\eta)$, we get  
\begin{align*}
\frac{\dd}{\dd t}\bigg|_{t=0}L(\wt X_{q\cdot g(t)})=&\frac{\dd}{\dd t}\bigg|_{t=0}L\left(h\cdot g(t),P(\ad_{g(t)^{-1}}\eta)\right)=\\
&\<\frac{\pa L}{\pa h},h_\ast b>+\<\frac{\pa L}{\pa \eta},P(-[b,\eta]_\h)>=
\<h^\ast\left(\frac{\pa L}{\pa h}\right)+\ad_{\eta}^\ast P^\ast\left(\frac{\pa L}{\pa \eta}\right),b>.
\end{align*}

Now, one can easily check that, under identifications  \eqref{eqn:lie_chapl_1}--\eqref{eqn:lie_chapl_4}, formula \eqref{eqn:b_tang_HQ} becomes \eqref{eqn:Lie_vak_var}, condition \eqref{eqn:nh_st} becomes \eqref{eqn:lie_nh_st},
condition \eqref{eqn:nh_vak} becomes \eqref{eqn:lie_nh_vak}, and condition \eqref{eqn:vak_nh} becomes \eqref{eqn:lie_vak_nh}. 

In the light of the above considerations, we may understand Theorem~\ref{thm:chap_syst} as a generalization of Theorem~\ref{thm:Lie} -- we can  substitute the Lie group $H$ with a general manifold $Q$. On the other hand, the results of Theorem~\ref{thm:Lie} apply even if the system on $H$ is not a subject of the action of a closed Lie subgroup $G\subset H$ (the completing subspace $d'$ does not have to be a subalgebra). Thus it covers also situations beyond the reach of Theorem~\ref{thm:chap_syst}. 

From a different perspective, we can treat left-invariant systems discussed in this remark as a particular class of examples of generalized Chaplygin systems, and thus understand Theorem~\ref{thm:Lie} restricted to the systems discussed in this remark as a general example of the usage of Theorem~\ref{thm:chap_syst}. 
\end{remark}

\begin{remark}[System on Lie groups as (non-invariant) Chaplygin systems -- another viewpoint]\label{rem:Chapl_as_lie_1}

There is also another way of understanding the geometric situation described in the previous Remark~\ref{rem:Chapl_as_Lie}. Namely, we can treat this system as a Chaplygin system on the left-principal $G$-bundle $H=Q$ over the homogeneous space $G\backslash H$ of  left quotients, equipped with the canonical left $G$-action   
(note that our considerations from Section~\ref{sec:chaplygin} can be repeated also for left principal bundles, under condition that the adjectives "left" and "right" are carefully intertwined). In this situation, again the splitting $d\oplus\g =\h$ defines the splitting of $\T Q$ into its horizontal part $\HH Q= H\times d$ (which is now invariant with respect to the action) and another part $H\times\g$. Yet now, due to the fact that the canonical identification $(Gh,h)\approx (G,e)$ is given by $gh\mapsto g$, the fundamental vector field associated with an element $b\in g$ is $h\mapsto (R_h)_\ast b=(L_h)_\ast \ad_{h^{-1}}b$. It follows that the canonical splitting of $\T_h H$ into its horizontal and vertical part is $(L_h)_\ast d\oplus(R_h)_\ast \g$, and not  $(L_h)_\ast d\oplus(L_h)_\ast \g$. Thus at each point $h\in H$ we have the two different identifications of $\T_h H$ with $d\oplus \g$. The passage between these two splittings is provided by 
\begin{equation}\label{eqn:passage}
(a,b)\longmapsto \left(a'=a+P(\ad_{h^{-1}}b), b'=P'(\ad_{h^{-1}}b)\right).
\end{equation} 

Now given two horizontal vectors $X=(L_h)_\ast\eta$ and $Y=(L_h)_\ast a$ at $h\in H$ (clearly $a,\eta\in d)$ and an element $b\in \g$ one easily shows that
\begin{subequations}
\begin{align}\label{eqn:lie_chapl_1a}
P'\ad_{h^{-1}}\RR(h)(X,Y)=&P'[\eta,a]_\h\\
\B(q)(X,b)=&0\\
\<\FL(\wt X),b>=&\<\frac{\pa L}{\pa \eta}, (R_h)_\ast b>
\intertext{and}
\<\BL(\wt X),b>=&\<\frac{\pa L}{\pa h}, (R_h)_\ast b>.\label{eqn:lie_chapl_4a}
\end{align}
\end{subequations}
We leave as an exercise to check that, under identifications  \eqref{eqn:lie_chapl_1a}--\eqref{eqn:lie_chapl_4a}, formula \eqref{eqn:b_tang_HQ} is equivalent to \eqref{eqn:Lie_vak_var}, condition \eqref{eqn:nh_st} to \eqref{eqn:lie_nh_st},
condition \eqref{eqn:nh_vak} to \eqref{eqn:lie_nh_vak}, and condition \eqref{eqn:vak_nh} to \eqref{eqn:lie_vak_nh}, where the former are derived for pairs $(a',b')$ which are related with pairs $(a,b)$ by \eqref{eqn:passage}. 

We see that in this specific situation we were able to interpret a left-invariant system on a Lie group as a left-Chaplygin system, with invariant horizontal distribution (yet no requirements of the invariance of the Lagrangian were needed). The price that had to be payed for this invariance is the quite complicated passage between the two trivializations of $\T_h H$: $(L_h)_\ast d\oplus (L_h)_\ast\g$ and $(L_h)_\ast d\oplus(R_h)_\ast\g$.  

\end{remark}

\newpage
\section{Examples}\label{sec:examples}
We shall end this paper by considering some well-known examples of nonholonomic systems with linear constraints. All these examples lie in the common setting of Sections~\ref{sec:chaplygin} and \ref{sec:lie_groups} described in Remark~\ref{rem:Chapl_as_Lie}. Thus they can be understood as a practical  demonstration of our results form both Section~\ref{sec:chaplygin} and \ref{sec:lie_groups}.  
\medskip

\begin{example}{The unicycle in a potential field}\label{ex:unicycle}

\begin{wrapfigure}[15]{o}{0.4\textwidth}
\begin{center}
\includegraphics[width=0.38\textwidth]{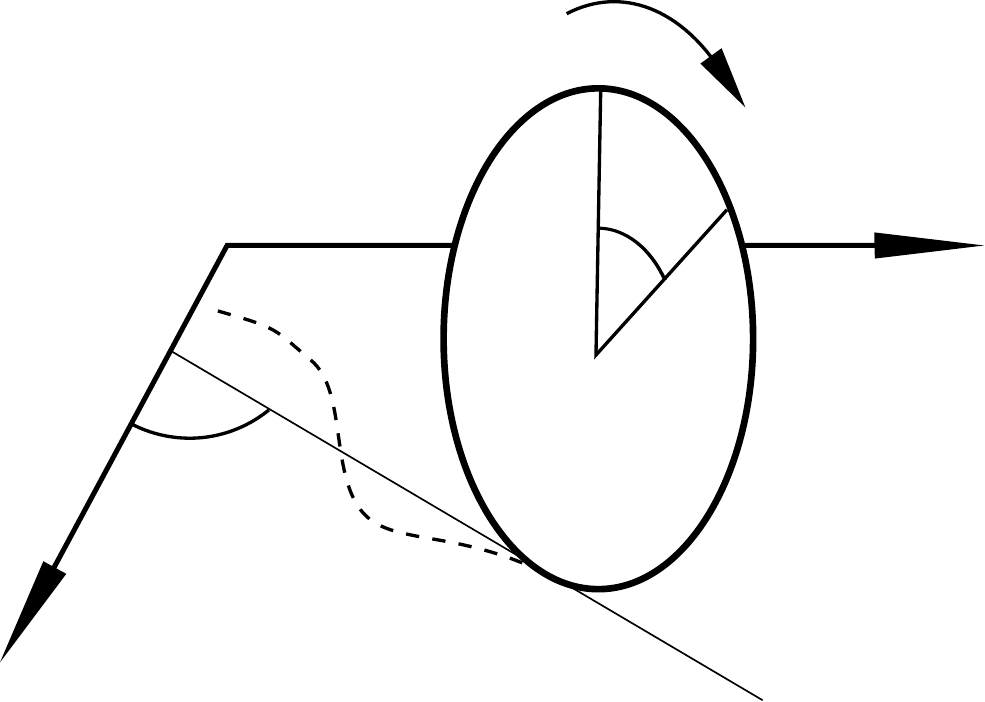}
\put(-160,10){$x$}
\put(-10,90){$y$}
\put(-140,40){$\varphi$}
\put(-60,85){$\theta$}
\end{center}
\caption{The unicycle}\label{fig:unicycle}
\end{wrapfigure} Consider a unicycle moving on the plane without slipping (see figure~\ref{fig:unicycle}). Any configuration of this system is determined by the contact point $(x,y)\in\R^2$, an angle $\varphi\in S^1$, which indicates the direction of movement, and an angle $\theta\in S^1$ describing the rotation of the wheel.
The configuration space is, hence, the Lie group $H=SE(2)\times S^1$ with natural coordinates $(x,y,\varphi,\theta)$. We denote by $m$ and $R$ the mass and the radius of the wheel, and by $I$ and $J$  the inertia of the wheel with respect to $\pa_\theta$- and $\pa_\varphi$-axes, respectively. 

We would like to study motions of the unicycle determined by the purely kinetic Lagrangian 
$$L(\dot x,\dot y,\dot \varphi, \dot\theta):=\frac 12\left[m(\dot x^2+\dot y^2)+J\dot\varphi^2+I\dot\theta^2\right],$$
and nonholonomic constrains (non-slipping condition):
\begin{align*}
\dot x=R\cos(\varphi)\dot\theta,\\
\dot y=R\sin(\varphi)\dot\theta.
\end{align*}

Introduce the following basis of vector fields on $H$
\begin{align*}
&e_1:=\cos\varphi\pa_x+\sin\varphi\pa_y,& \pa_\varphi,\\
&e_2:=\cos\varphi\pa_y-\sin\varphi\pa_x,& \pa_\theta.
\end{align*}
In fact, these are the left-invariant vector fields on $H$ subject to the following commutation relations 
$$[e_1,e_2]_\h=0,\quad[\pa_\varphi,e_1]_\h=e_2,\quad [\pa_\varphi,e_2]_\h=-e_1\quad\text{and}\quad[\pa_\theta,\cdot]_\h=0.$$
Thus $\{e_1,e_2,\pa_\varphi,\pa_\theta\}$ is a basis of the left Lie algebra $\h$ of the group $H$. Let now $(\alpha,\beta,\dot\phi,\dot\theta_1)$ be the fiber-wise coordinates in $\T H$ with respect to this basis. One easily shows that $\alpha=\dot x\cos\varphi+\dot y\sin\varphi$ and $\beta=\dot y\cos\varphi-\dot x\sin\varphi$. Therefore the Lagrangian reads 
$$L(\alpha,\beta,\dot\varphi,\dot\theta)=\frac 12\left[m(\alpha^2+\beta^2)+J\dot\varphi^2+I\dot\theta^2\right].$$
Since it depends on the $\h$-coefficients only, we conclude that $L$ is a left-$H$-invariant function.

The constraints distribution is characterized by equations $\alpha=R\dot\theta$ and $\beta=0$ and thus it is spanned by fields $e_1+\frac 1R\pa_\theta$ and $\pa_\varphi$. We clearly see that we are in the situation described in Section~\ref{sec:lie_groups}, of a system on the Lie group $H$  with the left-$H$-invariant constraints distribution corresponding to the subspace $d=\spann\{e_1+\frac 1R\pa_\theta,\pa_\varphi\}\subset\h$. The natural choice of the completing subspace is $d'=\spann\{e_1,e_2\}\subset\h$, which is not only a subspace, but also an Abelian subalgebra of $\h$ corresponding to a connected Abelian subgroup of translations $G:=\R^2\subset SE(2)$. For this reason we shall denote $d'=\g$. 

 Within this choice of the completing subspace we are, in fact, in the common setting of Sections~\ref{sec:chaplygin} and \ref{sec:lie_groups} described in Remark~\ref{rem:Chapl_as_Lie}. We can now apply the methods developed in these sections  to study the comparison problems for this system. 
\medskip

We shall first check condition \eqref{eqn:lie_nh_vak}. Observe that the left-hand side of \eqref{eqn:lie_nh_vak} vanishes identically, since $\frac{\pa L}{\pa h}=0$ (the Lagrangian is $H$-invariant) and $P[\eta,b]_\h=0$ for every $\eta\in d$ and $b\in\g$ (one can easily check that $[d,\g]_\h\subset\g$). So does the right-hand side of \eqref{eqn:lie_nh_vak}, since $[d,d]_\h\subset\spann\{e_2\}\subset\g$, and thus $\<\frac{\pa L}{\pa\eta},P'[\eta,a]_\h>=0$ for every $\eta,a\in d$, since $\frac{\pa L}{\pa \eta}=m\alpha e_1^\ast+m\beta e_2^\ast+J\dot\varphi\dd\varphi+I\dot\theta\dd\theta$, which on the constraints distribution reduces to  $\frac{\pa L}{\pa \eta}=m\alpha e_1^\ast+J\dot\varphi\dd\varphi+\frac IR\alpha\dd\theta\subset\Ann\{e_2\}$. We conclude that every nonholonomic extremal of the system is a vakonomic one and thus the system answers \eqref{main_question} positively.

We may further ask which nonholonomic extremals are unconstrained ones. A short calculation (using the commutation relations and the form of $\frac{\pa L}{\pa \eta}$) shows that condition \eqref{eqn:lie_nh_st} is equivalent to
$$ \dot\alpha=0 \quad\text{and}\quad \alpha\dot\varphi=0.$$
To check if these equations are satisfied we need to derive the nonholonomic equations of motion \eqref{eqn:lie_nh}. Again skipping some simple calculations we arrive at 
$$\dot\alpha=0\quad\text{and}\quad \ddot\varphi=0.$$
We conclude that both $\alpha$ and $\dot\varphi$ are constants of motion. Moreover, a nonholonomic extremal is an unconstrained one if and only if at least one of these constants vanishes. 

With a little more effort one can determine the vakonomic  multipliers $\lambda(t)$ for the system in question. The general form of any $\lambda(t)\in \Ann(d)$ is $\lambda(t)=f(t)(e_1^\ast-R\dd\theta)+g(t)e_2^\ast$. One can easily show that equations \eqref{eqn:lie_vak_lambda} read as
$$\dot f=g\dot\varphi+m\dot\alpha\quad\text{and}\quad \dot g=\dot\varphi(m\alpha-f).$$ 

Finally note that we can modify the Lagrangian $L$ by adding any  potential term $U(h)$ invariant in $G$-directions without changing the answer to question \eqref{main_question} and without changing the equation for a vakonomic multiplier. Indeed  in this situation $\<h(t)^\ast\left(\frac{\pa L}{\pa h}\right),b>=0$ for any $b\in \g$ and thus the additional term will not alter the left-hand side of formulae  \eqref{eqn:lie_nh_vak} and \eqref{eqn:lie_vak_lambda}. However, the nonholonomic equation of motion could be affected by such an addition.

 Let us summarize our considerations. 
\begin{conclusions}
The unicycle  answers positively question \eqref{main_question}. In such a situation nonholonomic trajectories are characterized by equations $\alpha=const$ and $\dot\varphi=const$. A nonholonomic trajectory is an unconstrained one if and only if at least one of these constants vanishes.

The  positive answer to \eqref{main_question} will not be affected by adding a  $G$-invariant potential term to the Lagrangian.
\end{conclusions}

\end{example}


\begin{example}{The two-wheeled carriage}\label{ex:2wheels}

This example of a nonholonomically constrained system is introduced following \cite{Favretti_1998, Crampin_Mestdag_2010}. It is a natural generalization of the system studied in Example~\ref{ex:unicycle}. Again it is a system on a Lie group with a left-invariant constraints distribution and, by choosing a proper completing distribution, we can consider it in the common setting of Sections~\ref{sec:chaplygin} and \ref{sec:lie_groups} as discussed in Remark~\ref{rem:Chapl_as_Lie}. 

\begin{wrapfigure}[16]{o}{0.4\textwidth}
\begin{center}
\includegraphics[width=0.38\textwidth]{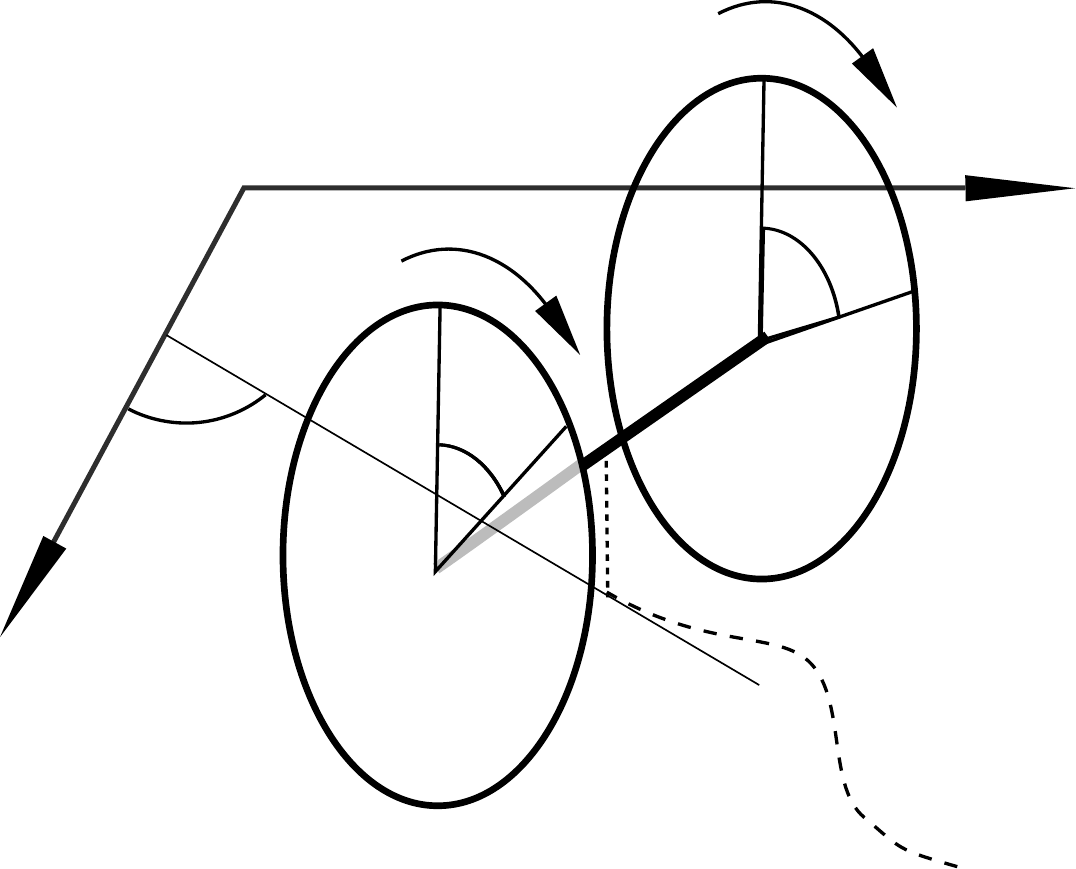}
\put(-173,33){$x$}
\put(-10,100){$y$}
\put(-151,65){$\varphi$}
\put(-43,103){$\psi_1$}
\put(-99,72){$\psi_2$}
\end{center}
\caption{The carriage}\label{fig:carriage}
\end{wrapfigure}

Consider a two-wheeled carriage mowing without slipping on the plane (see figure~\ref{fig:carriage}).  The position of the carriage is determined by an element $\left((x,y,\varphi),\psi_1,\psi_2\right)$ of the Lie group $H=SE(2)\times S^1\times S^1$. Here $(x,y)$ is the position of the center of mass, angle $\varphi$ describes the orientation of the axis, while angles $\psi_1$ and $\psi_2$ specify the rotation of the wheels. 

Parameters of the system include its total mass $m=m_0+2m_1$, being the sum of the mas of the carriage $m_0$ and two masses of the wheels $m_1$; the distance between the center of mass and the axis $l$; the inertia of the whole system  with respect to the $\pa_\varphi$-axis $J$, the inertia of each wheel $I$, the distance between the wheels $2w$ and  the radius of each wheel $R$.

Introduce (following \cite{Favretti_1998}) the variables
$$\theta_1=\frac 12(\psi_1+\psi_2),\quad \theta_2=\frac 12(\psi_1-\psi_2).$$
The kinetic Lagrangian  for this system reads as
$$L(\dot x,\dot y,\dot \varphi, \dot\theta_1,\dot\theta_2):=\frac 12\left[m(\dot x^2+\dot y^2)+J\dot\varphi^2+2I(\dot\theta_1^2+\dot\theta_2^2)\right]+m_0l\dot\varphi(\cos\varphi \dot y-\sin\varphi \dot x).$$

The non-slipping conditions (nonholonomic constrains), which depend on parameters $R$  and $w$, take the form
\begin{align*}
&\dot x\cos(\varphi)+\dot y\sin(\varphi)-R\dot\theta_1=0,\\
&\dot y\cos(\varphi)-\dot x\sin(\varphi)=0,\\
&\dot\varphi-\frac Rw\dot\theta_2=0.
\end{align*}

Introduce now a left-$H$-invariant basis of vector field on $H$
\begin{align*}
&e_1:=\cos\varphi\pa_x+\sin\varphi\pa_y,& \pa_\varphi,\phantom{x\quad\pa_{\theta_2}}\\
&e_2:=\cos\varphi\pa_y-\sin\varphi\pa_x,& \pa_{\theta_1},\quad\pa_{\theta_2};
\end{align*}
which is subject to the following commutation relations
$$[e_1,e_2]_\h=[\pa_{\theta_1},\cdot]_\h=[\pa_{\theta_2},\cdot]_\h=0, \quad [\pa_\varphi,e_1]_\h=e_2\quad\text{and}\quad [\pa_\varphi,e_2]_\h=-e_1.$$ 
Clearly, $\{e_1,e_2,\pa_\varphi,\pa_{\theta_1},\pa_{\theta_2}\}$ form a basis of  the Lie algebra $\h$ of the group $H$.
Let now $(\alpha,\beta,\dot\phi,\dot\theta_1,\dot\theta_2)$ be the fiber-wise coordinates on $\T H$ with respect to this basis. One easily shows that $\alpha=\dot x\cos\varphi+\dot y\sin\varphi$ and $\beta=\dot y\cos\varphi-\dot x\sin\varphi$. Therefore the Lagrangian reads as
$$L(\alpha,\beta,\dot\varphi,\dot\theta_1,\dot\theta_2)=\frac 12\left[m(\alpha^2+\beta^2)+J\dot\varphi^2+2I(\dot\theta_1^2+\dot\theta_2^2)\right]+m_0l\dot\varphi\beta.$$
It is a left-$H$-invariant function, since there is no dependence on the base coordinates. 

The constraints are characterized by the equations $\alpha=R\dot\theta_1$, $\beta=0$ and $w\dot\varphi=R\dot \theta_2$. Thus the constraints distribution is a left-$H$-invariant distribution corresponding to the linear subspace $d=\spann\{e_1+\frac 1R\pa_{\theta_1},\pa_\varphi+\frac wR\pa_{\theta_2}\}\subset \h$. By choosing the completing subspace $d'=\g=\spann\{e_1,e_2,\pa_\phi\}\subset\h$  (which is a subalgebra corresponding to a connected subgroup $G=SE(2)\subset H$), we position ourselves in the situation described in Remark~\ref{rem:Chapl_as_Lie}. Therefore we may apply the methods of Section~\ref{sec:lie_groups} to study the comparison problems for the system. 
\medskip

We shall check condition \eqref{eqn:lie_nh_vak}. By repeating the reasoning from the previous Example~\ref{ex:unicycle} ($\frac{\pa L}{\pa h}=0$ and $[d,\g]_\h\subset\g$) we see that the left-hand side of \eqref{eqn:lie_nh_vak} vanishes identically. Therefore it is enough to check if 
$$\int_I\<\frac{\pa L}{\pa \eta},P'[\eta(t),a(t)]_\h>\dd t=0$$
for the nonholonomic trajectory $\eta(t)$ and $a(t)\in d$ as in part \eqref{part:lie_nh_vak} of Theorem~\ref{thm:Lie}. Now 
$$\frac{\pa L}{\pa \eta}=m\alpha e_1^\ast+m\beta e_2^\ast+J\dot\varphi\dd\varphi+2I\dot\theta_1\dd\theta_1+ 2I\dot\theta_2\dd\theta_2+m_0l\dot\varphi e_2^\ast+m_0l\beta\dd\varphi,$$
which on the constraints distribution reduces to 
$$\frac{\pa L}{\pa \eta}=m\alpha e_1^\ast+J\dot\varphi\dd\varphi+\frac{2I} R\alpha\dd\theta_1+ \frac {2Iw}R\dot\varphi\dd\theta_2+m_0l\dot\varphi e_2^\ast.$$
Take now $\eta(t)=\alpha(t)(e_1+\frac 1R\pa_{\theta_1})+\dot\varphi(\pa_\varphi+\frac wR\pa_{\theta_2})$ and $a(t)=E(t)(e_1+\frac 1R\pa_{\theta_1})+F(t)(\pa_\varphi+\frac wR\pa_{\theta_2})$. Using the commutation relation one easily arrives at 
\begin{equation}\label{eqn:carriage_q2}
\<\frac{\pa L}{\pa \eta},P'[\eta(t),a(t)]_\h>=m_0l\dot\varphi(\alpha F(t)-\dot\varphi E(t)).\end{equation}
We clearly see that if $l=0$ or $\dot\varphi=0$, then the above expression vanishes identically, and thus condition \eqref{eqn:lie_nh_vak} is satisfied. In this way we repeat the observation (made already in \cite{Bloch_Crouch_1993,Crampin_Mestdag_2010,Favretti_1998, Fernandez_Bloch_2008}) 
that for $l=0$ the two-wheeled carriage answers \eqref{main_question} positively. Dealing with the case $l\neq 0$ requires much more attention, namely, we will need the information provided in the nonholonomic equation of motion \eqref{eqn:lie_nh} and in the equation for vakonomic admissible variations \eqref{eqn:Lie_vak_var}.  

A short computation of \eqref{eqn:lie_nh} involving the commutation relations shows that the nonholonomic extremals satisfy 
\begin{subequations}
\begin{align}\label{eqn:alpha}
&\dot\alpha=X(\dot\varphi)^2\\
&\ddot\varphi=-Y\alpha\dot\varphi, \label{eqn:phi}
\end{align}
\end{subequations}
where $X=\frac{m_0lR^2}{mR^2+2I}$ and $Y=\frac{m_0lR^2}{JR^2+2Iw^2}$ depend on the parameters of the system. On the other hand, the pair $a(t)=E(t)(e_1+\frac 1R\pa_{\theta_1})+F(t)(\pa_\varphi+\frac wR\pa_{\theta_2})\in d$ and $b(t)=A(t)e_1+B(t)e_2+C(t)\pa_\varphi\in\g$ satisfies \eqref{eqn:Lie_vak_var} if and only if 
\begin{align*}
\dot A=&\dot\varphi B\\
\dot B=&-\dot\varphi A+\alpha C+(\alpha F-\dot\varphi E)\\
\dot C=&0.
\end{align*} 
In condition \eqref{eqn:lie_nh_vak} we need to restrict our attention to curves vanishing at the end-points, and thus $C(t)\equiv 0$. Now the above system becomes
\begin{subequations}
\begin{align}\label{eqn:a}
\dot A=&\dot\varphi B\\
\dot B=&-\dot\varphi A+(\alpha F-\dot\varphi E).\label{eqn:b}
\end{align} 
\end{subequations}
In the light of condition \eqref{eqn:lie_nh_vak} and equation \eqref{eqn:carriage_q2}, a nonholonomic trajectory (for a carriage with $l\neq 0$) satisfying \eqref{eqn:alpha}--\eqref{eqn:phi} is a vakonomic one if and only if 
$\int_I\dot \varphi(\alpha F-\dot\varphi E)\dd t=0$ for every $(A,B,C,E,F)$ satisfying \eqref{eqn:a}--\eqref{eqn:b} and vanishing at the end-points. Assume that we have such a solution and let us calculate (in the integration by parts we use the fact that $A$ and $B$ vanish at the end-points)
\begin{align*}
\int_I\dot \varphi(\alpha F-\dot\varphi E)\dd t\overset{\eqref{eqn:b}}=&\int_I\dot\varphi\dot B\dd t+\int_I\dot\varphi^2A\dd t\overset{\text{int. by parts}}=
-\int_I\ddot\varphi B\dd t+\int_I \dot\varphi^2 A \dd t\overset{\eqref{eqn:phi}}=\\
&\int Y\alpha\dot\varphi B\dd t+\int_I \dot\varphi^2 A \dd t\overset{\eqref{eqn:a}}=\int_I Y\alpha \dot A+\int_I \dot\varphi^2 A \dd t\overset{\text{int. by parts}}=\\
&-\int_I Y\dot\alpha A+\int_I \dot\varphi^2 A \dd t \overset{\eqref{eqn:alpha}}=\int_I(-XY+1)\dot\varphi^2A\dd t.
\end{align*}
We conclude that if $XY=1$ then every nonholonomic trajectory is a vakonomic one, thus our system answers \eqref{main_question} positively. Condition $XY=1$ describes precisely the special configuration of the system found by a different method by Crampin and Mestdag \cite{Crampin_Mestdag_2010}. 

Let us end the discussion of this example by studying the vakonomic multiplier $\lambda(t)$. The general form of every $\lambda(t)\in \Ann(d)$ is $\lambda(t)=f(t)(e_1^\ast-R\dd\theta_1)+g(t)(\dd\varphi-\frac Rw\dd\theta_2)+h(t)e_2^\ast$. Equation \eqref{eqn:lie_vak_lambda} gives the following set of equations defining the coefficients $f$, $g$ and $h$:
\begin{align*}
\dot f=&m\dot\alpha-m_0l\dot\varphi^2+\dot\varphi h\\
\dot g=&J\ddot\varphi+m_0l\dot\varphi\alpha-\alpha h\\
\dot h=&m_0l \ddot\varphi+m\alpha\dot\varphi-\dot\varphi f.
\end{align*}
Using this equations we may characterize these nonholonomic trajectories which are simultaneously unconstrained ones. Setting $f=g=h=0$ leads to 
\begin{align*}
\dot\alpha=&\frac{m_0l}m\dot\varphi^2\\
\ddot\varphi=&-\frac{m_0l}J\dot\varphi\alpha\\
l\ddot\varphi=&-\frac{m}{m_0}\alpha\dot\varphi.
\end{align*}
For $l=0$ this implies $\dot\alpha=0$, $\ddot\varphi=0$ (which is consistent with the nonholonomic equations \eqref{eqn:alpha}--\eqref{eqn:phi} as for $l=0$ also $X=Y=0$) and $\alpha\dot\varphi=0$. For $l\neq 0$ the comparison of the above equations with the nonholonomic equations of motion \eqref{eqn:alpha}--\eqref{eqn:phi} implies the necessary conditions $\frac{m_0l}m=X$ and $\frac{m_0l}J=\frac{m}{m_0l}=Y$.  One can easily check that these conditions are met if and only if $I=0$ and $XY=1$. Let us summarize our considerations. 

\begin{conclusions}
The two-wheeled carriage with $l=0$ answers positively question \eqref{main_question}. In such a situation nonholonomic trajectories are characterized by equations $\alpha=const$ and $\dot\varphi=const$. Such a trajectory is an unconstrained one if and only if at least one of these constants vanishes.

The two-wheeled carriage with $l\neq 0$ answers positively question \eqref{main_question} only when its parameters are related by the following equation:
$$\frac{m_0^2l^2R^4}{(mR^2+2I)(JR^2+2Iw^2)}=1.$$
In this case a nonholonomic trajectory cannot by an unconstrained one unless $I=0$ or $\dot\varphi=0$. 

Analogously to Example~\ref{ex:unicycle} we could modify the Lagrangian by adding to it a $G$-invariant potential without changing our conclusions about property \eqref{main_question} and the equations defining the vakonomic multiplier.  
\end{conclusions}
\end{example}


\begin{example}{The Heisenberg system}\label{ex:heis}

Consider a well-known system (see eg. \cite{Jurdjevic_1997} p. 424) on $H=\R^3\ni(x,y,z)$ equipped with the structure of the Heisenberg group.  Constraints are introduced by a linear equation 
$$\dot z=y\dot x-x\dot y,$$ 
i.e., the corresponding constraints distribution is spanned by vector fields $e_1=\pa_x+y\pa_z$ and $e_2=\pa_y-x\pa_z$. These fields, together with $\pa_z$ form a basis of the Lie algebra $\h$ of $H$. They are subject to the following commutation relations: 
$$[e_1,e_2]_\h=-2\pa_z,\qquad[\pa_z,\cdot]_\h=0.$$
Consider now coordinates $(\alpha, \beta,\gamma)$ on the fibres of $\T H$ adapted to this basis. One easily  shows that $\alpha=\dot x$, $\beta=\dot y$ and $\gamma=\dot z-y\dot x+x\dot y$ (thus the constraints are characterized by $\gamma=0$). 
We shall study the dynamics defined by the following left-$H$-invariant Lagrangian on $\T H$:
$$L(\alpha,\beta,\gamma)=\frac 12\left(\alpha^2+\beta^2+\gamma^2\right).$$

We clearly are in the setting of Section~\ref{sec:lie_groups} with the constraints distribution corresponding to a subspace $d=\spann\{e_1,e_2\}\subset\h$. For the dimensional reasons any choice of the completing subspace $d'\subset \h$ places us in the setting of Remark~\ref{rem:Chapl_as_Lie}. For simplicity let us choose $d'=\g=\spann\{\pa_z\}$ which is a Lie subalgebra of $H$ corresponding to a 1-dimensional subgroup. 
\medskip

In the considered situation condition \eqref{eqn:lie_nh_vak} is trivially satisfied. Indeed, its left-hand side vanishes as $\frac{\pa L}{\pa h}=0$ and $[d,\g]_\h=0$. The vertical derivative
$\frac{\pa L}{\pa \eta}=\alpha \dd x+\beta\dd y+\gamma(\dd z-y\dd x+x\dd y)$ on the constraints distribution reduces to 
$$\frac{\pa L}{\pa \eta}=\alpha \dd x+\beta\dd y\subset\Ann(\g).$$
Now $[d,d]_\h\subset\g$ and thus also the right-hand side of \eqref{eqn:lie_nh_vak} vanishes identically. 

A similar reasoning shows that also condition \eqref{eqn:lie_nh_st} is trivially satisfied in this situation, thus every nonholonomic trajectory is, in fact, also an extremal of an unconstrained system. This reproduces the result from \cite{Bloch_Crouch_1993}. 

With a little more effort one can derive the nonholonomic equations of motion \eqref{eqn:lie_nh} which read simply as
$$\dot\alpha=0\quad\text{and}\quad\dot\beta=0.$$
Also equation \eqref{eqn:lie_vak_lambda} defining the vakonomic multiplier $\lambda(t)\in \Ann(d)\subset\h^\ast$ (such $\lambda(t)$ has to be of the form $f(t)(\dd z-y\dd x +x\dd y)$) is of particularly simple form. Namely,
$$\dot f=0.$$
We can now summarize our considerations.
\begin{conclusions} The Heisenberg system answers positively question \eqref{main_question} . In fact, every nonholonomic extremal is also an extremal of the unconstrained problem.  
\end{conclusions}
\end{example}


\begin{remark}\label{rem:Bloch_Fernandez}
In Proposition 3(5) in \cite{Fernandez_Bloch_2008} Fernandez and Bloch claimed that every nonholonomic system defined by a 2-distribution $D$ on a 3-manifold $M$ cannot be conditionally variational (i.e., the set of nonholonomic trajectories of this system cannot be a subset of the set of vakonomic trajectories) unless $D$ is integrable. The above Example~\ref{ex:heis} contradicts this claim. This can be also seen  directly at the level of the equations of motion. Indeed, we derived above that these are $\dot\alpha=\dot\beta=0$, i.e., $\ddot x=\ddot y=0$. Now $\ddot z=x\ddot y-y\ddot x=0$ and hence each nonholonomic trajectory is a line respecting the constraints. It is a simple exercise to check that such a curve is also an extremal of the unconstrained problem. By Proposition~\ref{prop:constraints} it also a vakonomic extremal. 

Actually Fernandez and Bloch proved the following fact: if a nonholonomic trajectory $\gamma$ of a system defined by a 2-distribution $D$ on a 3-manifold $M$ is a vakonomic one with a Lagrange multiplier $\mu$, then either $\mu\equiv 0$ or $D$ is integrable along $\gamma$. Hence their Proposition 3(5) should be correctly restated as follows:

\begin{prop}
Consider a system given by a 2-distribution $D$ on a 3-manifold $M$. Then every nonholonomic extremal is a vakonomic one if and only if it is also an unconstrained one. 
\end{prop}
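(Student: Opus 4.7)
The plan is to combine the Fernandez--Bloch fact recalled just above the proposition with Proposition~\ref{prop:constraints}.

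For the direction $(\Leftarrow)$, if a nonholonomic extremal $\gamma$ is also an unconstrained extremal of $L$, then $\gamma \in \Gamma_{\PP^{st}_{\T M}} \cap \TT_D$. Applying Proposition~\ref{prop:constraints} with $\PP = \PP^{st}_{\T M}$ and $\wt\PP = \PP^{vak}_D$ (the latter being a genuine restriction of the former, both in its admissible trajectories and in its admissible variations) immediately yields $\gamma \in \Gamma^{vak}_D$, so $\gamma$ is a vakonomic extremal.

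For the direction $(\Rightarrow)$, I would start from a nonholonomic extremal $\gamma$ that is simultaneously a vakonomic one, with associated Lagrange multiplier $\mu$. The Fernandez--Bloch dichotomy then provides: either $\mu \equiv 0$, or $D$ is integrable along $\gamma$. In the first alternative the modified vakonomic Lagrangian $\wt L = L - \mu\,\omega$ (with $\omega$ any 1-form cutting out $D$) collapses to $L$ itself, so $\gamma$ satisfies the free Euler--Lagrange equations and is thereby an unconstrained extremal.

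The main obstacle is the residual case $\mu \not\equiv 0$ with $D$ integrable along $\gamma$. Since the content of the proposition is a correction to Proposition~3(5) of \cite{Fernandez_Bloch_2008} directed at the non-integrable setting (whose Heisenberg incarnation is worked out in Example~\ref{ex:heis}), I would dispose of this case by localizing to a non-invariant Chaplygin structure with a one-dimensional fibre via Remark~\ref{rem:Chapl_as_Lie} and invoking Theorem~\ref{thm:chap_syst}\eqref{part:vak_nh}: at every point $q(t)$ where $D$ fails to be integrable, the map $Y \mapsto \RR(q(t))(X(t),Y)$ takes values in $\g \cong \R$ and is nonzero (as $\RR$ is a skew-symmetric bilinear form on the $2$-dimensional fibre $\T_{\pi(q(t))} M$ whose vanishing is equivalent, via \eqref{eqn:curv} and Corollary~\ref{cor:int_distr}, to integrability at $q(t)$), so condition \eqref{eqn:vak_nh} forces $\mu(t)=0$ there; continuity of $\mu$ then propagates this to $\mu \equiv 0$ along the connected trajectory $\gamma$, returning us to the first case.
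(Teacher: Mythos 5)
Your $(\Leftarrow)$ direction and the $\mu\equiv 0$ branch of $(\Rightarrow)$ are correct and coincide with the paper's own argument: Remark \ref{rem:Bloch_Fernandez} obtains the proposition precisely by combining Proposition \ref{prop:constraints} with the Fernandez--Bloch dichotomy, and your re-derivation of that dichotomy from Theorem \ref{thm:chap_syst} \eqref{part:vak_nh} is a legitimate extra, fully in the spirit of Section \ref{sec:chaplygin} (modulo the small point that the localization needs only Definition \ref{def:ch_syst} -- a local fibration of the $3$-manifold transverse to $D$, with $G=\R$, no invariance required -- rather than Remark \ref{rem:Chapl_as_Lie}, which concerns left-invariant systems on Lie groups).

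The genuine gap is your treatment of the residual case, which is circular. That case is, by definition, the one in which $D$ is integrable \emph{at every point} of $\gamma$; there $\RR(q(t))(X(t),\cdot)=0$ for all $t$, so condition \eqref{eqn:vak_nh} holds for \emph{every} multiplier and gives no information, and your continuity argument propagates $\mu=0$ from the empty set of non-integrability points. The curvature computation you invoke is exactly a proof of the dichotomy itself (either $\mu\equiv 0$ or $D$ integrable along $\gamma$ -- and note it additionally needs $X(t)\neq 0$, since a skew form on a $2$-dimensional space contracted with the zero vector vanishes even at non-integrable points), not a refutation of its second horn. Nor can that horn be refuted: take $D=\ker\dd z$ on $\R^3$ and $L=\frac 12\left(\dot x^2+\dot y^2+\dot z^2\right)-U(x,y,z)$ with $\pa_z U\neq 0$; a motion confined to a leaf $\{z=\mathrm{const}\}$ is a nonholonomic extremal, and it is vakonomic with the multiplier solving $\dot\mu=\pa_z U$, yet it is not an unconstrained extremal (that would force $\ddot z=-\pa_z U\neq 0$, contradicting $\dot z\equiv 0$). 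So the proposition must be read, as in Fernandez--Bloch's Proposition 3(5) and Example \ref{ex:heis}, with non-integrability of $D$ along the trajectory as a standing hypothesis -- under which the dichotomy collapses to $\mu\equiv 0$ and your argument (and the paper's, which simply cites the dichotomy and leaves this hypothesis implicit) closes -- whereas your attempt to prove it unconditionally, by folding the integrable case back into $\mu\equiv 0$, fails at exactly this step.
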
 
\end{remark}


\begin{example}{The generalized Heisenberg system}\label{ex:gen_heis}

We can generalize previous Example~\ref{ex:heis} by considering Lagrangians  of the following form
$$
L(x,y,z,\alpha,\beta,\gamma)=f(x,y)\alpha^2+g(x,y)\alpha\beta+h(x,y)\beta^2+ \Phi(x,y,z)\gamma^2-U(x,y).$$ 
In this case $L$ is invariant in the $\g$-direction along the constraints distribution $\{\gamma=0\}$ and thus on every nonholonomic trajectory $\<h^\ast\frac{\pa L}{\pa h},b>=0$ for any $b\in\g$. Moreover, on the constraints distribution 
$$\frac{\pa L}{\pa \eta}=\left(2f(x,y)\alpha+g(x,y)\beta\right)\dd x+(2h(x,y)\beta+g(x,y)\alpha)\dd y\in\Ann(\g),$$  
so this covector annihilates $\g$, $[d,\g]_\h=0$ and $[d,d]_\h\subset\g$. We clearly see that conditions \eqref{eqn:lie_nh_vak} and \eqref{eqn:lie_nh_st} are trivially satisfied in this situation. Note that we came to these conclusions without the necessity of deriving the nonholonomic equation of motion, which in this case are quite complicated.

\begin{conclusions} The generalized Heisenberg system answers  positively question \eqref{main_question}. In fact, every nonholonomic extremal is also an extremal of the unconstrained problem.  
\end{conclusions}

\end{example}  

\section*{Conclusions}

In this paper we studied the comparison problems of nonholonomic and vakonomic constrained Lagrangian dynamics for the same set of constraints. Our approach was based on an observation (made already by several researchers \cite{Cardin_Favretti_1996, GG_2008,Gracia_Martin_Munos_2003,Guo_Liu_Inn_2007,Leon_2012} and rooted in a general philosophy of Tulczyjew \cite{Tul_2004}) that nonholonomically and vakonomically constrained Lagrangian systems can be put into the frames of the same unifying variational formalism (called in this paper a \emph{variational principle}). The differences in these two systems appear at the level of admissible variations. The new idea is to concentrate solely on these differences, completely ignoring the resulting differences at the level of the equations of motion. In fact, we understand the equations of motion as secondary objects: the consequences of the underlying variational principle, not the fundamental description of the system. Such a point of view results in simplicity, since admissible variations have a much simpler description, and clearer geometric nature, than the resulting equations. Moreover, concentrating on the variations we can easily relate the comparison problem of both dynamics with the symmetries of the system. 

As a particular realization of this strategy we studied (generalized) Chaplygin systems and left-invariant systems on Lie groups. Using our approach we were able to substantially generalize many classical results for such systems. 

\section*{Acknowledgments}

The authors are grateful for a referee of the previous version of this work for pointing our attention to the paper of Crampin and Mestdag \cite{Crampin_Mestdag_2010}. We also acknowledge prof. Paweł Urbański for reference suggestions. 

This research was supported by the National Science Center under the grant DEC-2011/02/A/ST1/00208 ``Solvability, chaos and control in quantum systems''.

\newpage
\bibliographystyle{acm}
\bibliography{references}
\end{document}